\newcommand{\calf}{\mathcal{F}}
\newcommand{\cala}{\mathcal{A}}
\newcommand{\caln}{\mathcal{N}}
\newcommand{\calg}{\mathcal{G}}
\newcommand{\calh}{\mathcal{H}}
\newcommand{\calgSW}{{}_{SW}\calg}
\newcommand{\calgNE}{\calg^{N\!E}}
\def\Ddots{\mathinner{\mkern1mu\raise\p@
\vbox{\kern7\p@\hbox{.}}\mkern2mu
\raise4\p@\hbox{.}\mkern2mu\raise7\p@\hbox{.}\mkern1mu}}
\newtheorem{thm}{Theorem}[section]
\newtheorem{prop}[thm]{Proposition}
\newtheorem{lem}[thm]{Lemma}
\newtheorem{cor}[thm]{Corollary}
\newtheorem{thmIntro}{Theorem}
\newtheorem*{cora}{Corollary}
\theoremstyle{defn}
\newtheorem{definition}[thm]{Definition}
\newtheorem{example}[thm]{Example}
\theoremstyle{remark}
\newtheorem{remark}[thm]{Remark}
\numberwithin{equation}{section}
\newcommand{\gi}[3]{G_{#1}, G_{#2}, \ldots, G_{#3}}
\newcommand{\cfq}{[a_1,a_2,\ldots,a_n]}
\newcommand{\cfa}{[a_1,a_2,\ldots,a_n]}
\newcommand{\cfl}{[L_1,L_2,\ldots,L_n]}
\newcommand{\cfg}{\calg [a_1,a_2,\ldots,a_n]}
\newcommand\restr[2]{{
  \left.\kern-\nulldelimiterspace 
  #1 
  \vphantom{\big|} 
  \right|_{#2} 
  }}
  \def\sgn{\operatorname{sgn}}
\newcommand{\za}{\alpha}
\newcommand{\zb}{\beta}
\newcommand{\zD}{\Delta}
\newcommand{\ze}{\epsilon}
\newcommand{\zg}{\gamma}
\newcommand{\zL}{\Lambda}
\newcommand{\FF}{\mathsf{F}}
\newcommand{\GG}{\mathsf{G}}
\DeclareMathOperator{\Match}{Match}
\DeclareMathOperator{\cross}{cross}
\newcommand{\Int}{\textup{Int }}
\newcommand{\x}{\mathbf{x}}
\DeclareMathOperator{\match}{Match \, }
\begin{document}
\title{Cluster algebras and continued fractions}
\subjclass[2000]{Primary: 13F60, 
Secondary: 11A55 
and  30B70
}
\keywords{Cluster algebras, continued fractions, snake graphs}
\author{Ilke Canakci}
\address{Department of Mathematical Sciences,
Durham University,
Lower Mountjoy,
Stockton Road, Durham DH1 3LE,
UK}
\email{ilke.canakci@durham.ac.uk}
\author{Ralf Schiffler}\thanks{The first author  was supported by EPSRC grant number   EP/N005457/1, UK,  and  Durham University, and the second author was supported by the NSF-CAREER grant  DMS-1254567, and by the University of Connecticut.}
\address{Department of Mathematics, University of Connecticut, 
Storrs, CT 06269-3009, USA}
\email{schiffler@math.uconn.edu}




%
%
%
\begin{abstract}
We establish a combinatorial realization of continued fractions as quotients of cardinalities of sets. These sets are sets of perfect matchings of certain graphs, the snake graphs, that appear naturally in the theory of cluster algebras. To a continued fraction $\cfa$, we associate a snake graph $\calg\cfa$ such that the continued fraction is the quotient of the number of perfect matchings of $\calg\cfa$ and $\calg[a_2,\ldots,a_n]$. We also show that snake graphs are in bijection with continued fractions. 

We then apply this connection between cluster algebras and continued fractions in two directions. First, we use results from snake graph calculus to obtain new identities for the continuants of continued fractions. Then, we apply the machinery of continued fractions to cluster algebras and obtain explicit direct formulas for quotients of elements of the cluster algebra as continued fractions of Laurent polynomials in the initial variables. Building on this formula, and using classical methods for infinite periodic continued fractions, we also study the asymptotic behavior of quotients of elements of the cluster algebra. 
\end{abstract}

 \maketitle

\setcounter{tocdepth}{1}
\tableofcontents


\section{Introduction}
Cluster algebras were introduced by Fomin and Zelevinsky in 2002 in \cite{FZ1}. Originally motivated by the study of canonical bases in Lie theory,  the theory has gained  a tremendous development over the last 15 years, and cluster algebras are now connected to various areas of mathematics and physics, including Lie theory, quiver representations, combinatorics, dynamical systems, algebraic geometry, Teichm\"uller theory, and string theory.
 
 In this paper, we add another item to this list by providing a connection between cluster algebras and continued fractions. The method of continued fractions is a classical tool that is of fundamental importance in analysis and number theory, and also appears in many other areas of mathematics, for example in knot theory and hyperbolic geometry.
  
  The connection we are proposing is of combinatorial nature. The key combinatorial objects are the so-called snake graphs. These graphs appeared naturally in the theory of cluster algebras of surface type in \cite{Propp,MS,MSW,MSW2} and were studied in a systematic way in our previous work \cite{CS,CS2,CS3}. Each cluster variable in a cluster algebra of surface type is given by a combinatorial formula whose terms are parametrized by the perfect matchings of a snake graph \cite{MSW}.
 
 In the present paper, we construct a snake graph $\calg\cfa$ for every continued fraction $\cfa$ with $a_i\in\mathbb{Z}_{\ge 1}$. Our first main result is the following combinatorial interpretation of continued fractions as quotients of cardinalities of sets.

\begin{thmIntro}  If $m(\calg)$ denotes the number of perfect matchings of the graph $\calg$ then
 \[ \cfa=\frac{m(\calg\cfa)}{m(\calg[a_2,a_3,\ldots,a_n])}\]
 and the fraction on the right hand side  is reduced.
\end{thmIntro}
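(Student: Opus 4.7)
The plan is to reduce the theorem to a single matching recurrence and then invoke classical continued-fraction facts. Recall that the standard numerator and denominator of $\cfa$ are the continuants $K(a_1,\ldots,a_n)$ and $K(a_2,\ldots,a_n)$, which satisfy
\[
K(a_1,\ldots,a_n) = a_1\, K(a_2,\ldots,a_n) + K(a_3,\ldots,a_n)
\]
with initial values $K(\emptyset)=1$ and $K(a_1)=a_1$. A classical fact is that consecutive continuants are coprime, so the fraction $K(a_1,\ldots,a_n)/K(a_2,\ldots,a_n)$ is automatically in lowest terms. Thus it suffices to prove $m(\calg[a_1,\ldots,a_n]) = K(a_1,\ldots,a_n)$ for all $n$, which yields both assertions of the theorem simultaneously.

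I would proceed by induction on $n$, the small cases reducing to a direct computation from the definitions of the snake graphs $\calg[\,]$ and $\calg[a_1]$. For the inductive step, the key point is to establish the matching recurrence
\[
m(\calg[a_1,a_2,\ldots,a_n]) \;=\; a_1\cdot m(\calg[a_2,\ldots,a_n]) + m(\calg[a_3,\ldots,a_n]).
\]
Combinatorially, $\calg[a_1,a_2,\ldots,a_n]$ is obtained from the shorter snake graphs by prepending a straight block of $a_1$ tiles at a distinguished junction. The idea is to partition the perfect matchings of $\calg[a_1,\ldots,a_n]$ according to how they use the single edge that glues this block to the rest of the graph. When this edge lies in the matching, the restriction splits as an arbitrary matching of $\calg[a_2,\ldots,a_n]$ together with one of $a_1$ possible matchings of the $a_1$-block compatible with the constraint, contributing $a_1\cdot m(\calg[a_2,\ldots,a_n])$. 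In the complementary case the gluing edge is absent, forcing the matching in a neighborhood of the junction and identifying what remains with an arbitrary matching of the truncated graph $\calg[a_3,\ldots,a_n]$.

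The main obstacle is the bijective analysis in this inductive step: one must pin down precisely the local structure of $\calg[a_1,\ldots,a_n]$ where the first two blocks meet, and verify that the two restriction maps described above are honest bijections onto the claimed targets (in particular, that the matching count on a straight block with a prescribed boundary condition really is $a_1$, and that the forced portion in the second case does not leave any choices hidden). Once the matching recurrence is established, the identification $m(\calg[a_1,\ldots,a_n])=K(a_1,\ldots,a_n)$ follows at once by induction, the equality $\cfa = m(\calg[a_1,\ldots,a_n])/m(\calg[a_2,\ldots,a_n])$ is automatic, and the reducedness of the fraction follows from the coprimality of consecutive continuants.
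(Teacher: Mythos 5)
Your overall strategy is exactly the paper's: reduce everything to the three-term recurrence $m(\calg[a_1,\ldots,a_n]) = a_1\, m(\calg[a_2,\ldots,a_n]) + m(\calg[a_3,\ldots,a_n])$, recognize it as the continuant recurrence, and get reducedness from the coprimality of consecutive continuants. (The paper re-derives the arithmetic side by hand from $[a_3,\ldots,a_n]=N/D$ with $N,D$ coprime rather than citing continuant theory, but your shortcut is legitimate and slightly cleaner.) The problem is that the combinatorial core --- the proof of the recurrence --- is precisely what you have left as an acknowledged ``obstacle,'' and your sketch of it misdescribes the local structure in two ways that would derail a direct attempt.

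First, the block of tiles corresponding to $a_1$ is not ``a straight block of $a_1$ tiles.'' By construction, $a_1$ corresponds to a run of $a_1$ equal signs, which produces the \emph{zigzag} subgraph $\calh_1=(G_1,\ldots,G_{a_1-1})$ with $a_1-1$ tiles, followed by the junction tile $G_{\ell_1}$; straightness occurs only across the three tiles $G_{\ell_1-1},G_{\ell_1},G_{\ell_1+1}$. This is not cosmetic: a zigzag snake graph with $k$ tiles has $k+1$ perfect matchings (this is what produces the factor $a_1$, and what forces the matching in the second case), whereas a straight snake graph with $k$ tiles has $F_{k+2}$ matchings, so a genuinely straight block would not yield the continuant recurrence at all. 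Second, the case split cannot be on ``the single edge that glues this block to the rest'': the subgraphs $\calh_1$ and $\calg\setminus\calh_1\cong\calg[a_2,\ldots,a_n]$ are joined through the \emph{two} boundary edges of the junction tile $G_{\ell_1}$, and a parity argument shows that a perfect matching contains either both of them or neither. In the ``neither'' case the matching splits as an arbitrary perfect matching of $\calh_1$ (there are $a_1$ of these) times an arbitrary perfect matching of $\calg[a_2,\ldots,a_n]$; in the ``both'' case the matching is forced to consist of boundary edges on the zigzags $\calh_1$ and $\calh_2$, leaving an arbitrary perfect matching of $\calg[a_3,\ldots,a_n]$. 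With the case split and the zigzag/straight roles corrected, your argument becomes the paper's proof of Theorem~\ref{thm1}.
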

 
 In particular, this theorem implies that the number of terms in the Laurent expansion of cluster variables in cluster algebras of surface type is equal to the numerator of the associated continued fraction.

We then show that our construction actually provides a bijection between snake graphs and finite continued fractions in Theorem~\ref{thm bijections}, as well as a bijection between infinite snake graphs and infinite continued fractions in Theorem~\ref{thm bijectionsinf}. A direct consequence  is the following corollary.

\begin{cora}
 The number of snake graphs that have precisely $N$ perfect matchings is equal to $\phi(N)$, where $\phi$ is Euler's totient function.
\end{cora}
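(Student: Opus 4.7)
The plan is to combine Theorem~\ref{thm bijections} with Theorem~A to reduce the corollary to a classical counting identity for Euler's totient function. The essential point is that snake graphs biject with reduced fractions $p/q\geq 1$ in such a way that the numerator $p$ of the fraction equals the number of perfect matchings of the corresponding snake graph.

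More precisely, by Theorem~\ref{thm bijections} snake graphs are in bijection with finite continued fractions $[a_1,\ldots,a_n]$ with $a_i\in\mathbb{Z}_{\geq 1}$, subject to a normalization (for instance $a_n\geq 2$ whenever $n\geq 2$, together with the singleton $[1]$) that yields a unique representative for each positive rational $\geq 1$. By Theorem~A, the snake graph $\calg[a_1,\ldots,a_n]$ satisfies $m(\calg[a_1,\ldots,a_n])$ equal to the numerator of $[a_1,\ldots,a_n]$ written in lowest terms. So snake graphs with exactly $N$ matchings correspond to reduced fractions $p/q\geq 1$ with $p=N$; these are parametrized by integers $q$ with $1\leq q\leq N$ and $\gcd(q,N)=1$, of which there are exactly $\phi(N)$ by definition.

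The case $N=1$ gives only $1/1=[1]$, the trivial snake graph, matching $\phi(1)=1$; for $N\geq 2$ the value $q=N$ is automatically excluded since $\gcd(N,N)=N>1$. The main (and rather mild) obstacle is verifying that the normalization of Theorem~\ref{thm bijections} is precisely the one that breaks the standard ambiguity $[a_1,\ldots,a_n]=[a_1,\ldots,a_n-1,1]$, so that one does not overcount; once this bookkeeping is settled, the argument is immediate.
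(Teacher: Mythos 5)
Your proposal is correct and follows essentially the same route as the paper: the paper's proof invokes the map $\chi$ of Theorem~\ref{thm bijections} (which by commutativity of the diagram is exactly your composite of $\FF'$ with $Ev$, i.e.\ $\calg\mapsto m(\calg)/m(\calg\setminus\calh_1)$, reduced by Theorem~\ref{thm1}) to identify snake graphs with $N$ matchings with reduced fractions $N/q\in\mathbb{Q}_{>1}$, of which there are $\phi(N)$, treating the single-edge case $N=1$ separately just as you do. The normalization worry you raise at the end is already settled inside Theorem~\ref{thm bijections} (the map $g$ and the target ``last coefficient $>1$''), so no extra bookkeeping is needed.
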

 
 This relation between snake graphs and continued fractions is useful in both directions. On the one hand, we can use it to interpret results on snake graphs from \cite{CS} as identities in terms of continued fractions. We present several examples in Theorem~\ref{graftingcf}, where we recover the classical identities for the convergents of continued fractions and  Euler's identities for continuants, but we also produce another family of identities  that we did not find in the literature on continued fractions.
 
 On the other hand, we can use the machinery of continued fractions in the study of cluster algebras. Let $\cala$ be a cluster algebra of surface type and initial cluster $(x_1,x_2,\ldots,x_N)$. Then the cluster variables $x_\zg$ of $\cala$ are in bijection with the arcs $\zg$ in the surface \cite{FST}, and can be computed via the snake graph formula of \cite{MSW} mentioned above. The same formula also defines an element $x_\zg$ for every \emph{generalized} arc,  where  generalized arcs  differ from arcs by allowing to have self-crossings.
 
 Now let $\zg$ be a generalized arc with snake graph $\calg_\zg=\calg\cfa$. Then there exists a unique generalized arc $\zg'$ whose snake graph $\calg_{\zg'}$ is the subgraph $\calg[a_2,a_3,\ldots,a_n]$ of $\calg_\zg$. 
 For each $a_i$, we construct a Laurent polynomial $L_i$ in the initial cluster variables $x_1,x_2,\ldots,x_N$ depending on $\zg$, and we prove the following result.
 
\begin{thmIntro}\label{thm 1.2}
 For every generalized arc $\zg$, we have the following identity in the field of fractions of the cluster algebra $\cala$
 \[
\frac{x_\zg}{x_{\zg'}}=\cfl.\]
Moreover, $x_\zg$ and $x_{\zg'}$ are relatively prime in the ring of Laurent polynomials and thus the left hand side of the equation is reduced.

In particular, if $\zg$ and $\zg'$ have no self-crossing then the left hand side of the equation is a quotient of cluster variables.
\end{thmIntro}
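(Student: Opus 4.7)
The plan is to deduce Theorem B from its unweighted counterpart (Theorem A) by carrying the combinatorial identity through the weighted snake graph expansion of Musiker-Schiffler-Williams.

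First, I would set up the weighted version by recalling the MSW expansion formula: for every generalized arc $\zg$ with snake graph $\calg_\zg = \calg\cfa$,
\[
x_\zg \;=\; \frac{1}{\operatorname{cross}(\zg)} \sum_{P \in \Match(\calg_\zg)} x(P)\, y(P),
\]
where $x(P), y(P)$ are the monomial weights built from the tile-labels inherited from the triangulation. Writing $\calg[a_1,\dots,a_n]$ as a concatenation of ``straight blocks,'' I define each $L_i$ to be an explicit Laurent polynomial in the initial cluster variables that combines the $x$- and $y$-weights associated to the tiles of the $i$-th block of $a_i$ tiles together with the edge weights at the block's boundary. The $L_i$ are designed so that $L_i$ reduces to the integer $a_i$ after specialization $x_j \mapsto 1$, $y_j \mapsto 1$, thus providing a weighted refinement of the classical continuant data that appears in Theorem A.

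Next, I would prove the identity $x_\zg / x_{\zg'} = \cfl$ by induction on $n$, mirroring the unweighted proof. For the base case $n=1$ one checks directly. For the inductive step I would use the snake graph decomposition / grafting machinery from \cite{CS}: matchings of $\calg[a_1,\dots,a_n]$ split according to their restriction to the first block, yielding a weighted matching recurrence of the form
\[
\sum_{P \in \Match(\calg[a_1,\dots,a_n])} x(P)y(P) \;=\; \widetilde{L}_1 \!\!\sum_{P' \in \Match(\calg[a_2,\dots,a_n])} \!\!\! x(P')y(P') \;+\; c \!\!\sum_{P'' \in \Match(\calg[a_3,\dots,a_n])} \!\!\! x(P'')y(P''),
\]
where $\widetilde{L}_1$ and the correction factor $c$ are monomials in the tile labels. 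Translating back through the MSW formula, the $\operatorname{cross}$ denominators absorb $c$ and $\widetilde L_1$ into $L_1$, producing the weighted recurrence $x_\zg = L_1 x_{\zg'} + x_{\zg''}/(\text{some monomial})$ which, after dividing by $x_{\zg'}$ and invoking induction on $x_{\zg'}/x_{\zg''} = [L_2,\dots,L_n]$, gives precisely the continued fraction identity.

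The main obstacle will be coprimality of $x_\zg$ and $x_{\zg'}$ in the Laurent polynomial ring $\Z[x_1^{\pm1},\dots,x_N^{\pm1}]$. My strategy: use the grading by $y$-degree, where the minimal matching contributes the lowest $y$-degree term of $x_\zg$ and the maximal matching contributes the highest, both of which are \emph{monomials} in the initial $x_j$'s. Any common factor of $x_\zg$ and $x_{\zg'}$ in the Laurent polynomial ring must divide these extremal monomials, hence must itself be a monomial. To rule out a nontrivial common monomial factor, I would specialize $y_j \mapsto 1$ and $x_j \mapsto 1$; the left-hand side collapses to the integer ratio $m(\calg_\zg)/m(\calg_{\zg'})$, which is reduced by Theorem A, and combining this with the monomial constraint forces the common factor to be $1$. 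The final clause is immediate: if $\zg,\zg'$ have no self-crossings then by \cite{FST,MSW} $x_\zg$ and $x_{\zg'}$ are genuine cluster variables, so the identity expresses their quotient as a continued fraction of Laurent polynomials in the initial seed.
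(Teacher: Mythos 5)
Your proof of the identity $x_\zg/x_{\zg'}=\cfl$ follows essentially the paper's own route: induction on $n$, driven by splitting the perfect matchings of $\calg\cfa$ according to their restriction to the first block $\calh_1$, which is exactly the grafting identity $b_1\,\calg\cfa=\calh_1\,\calg[a_2,\ldots,a_n]+b_0\,\calg[a_3,\ldots,a_n]$ of Theorem~\ref{grafting}(a) used in the paper's proof of Theorem~\ref{thmx}. One step you gesture at but do not actually supply: when you ``invoke induction on $x_{\zg'}/x_{\zg''}=[L_2,\ldots,L_n]$'', the induction hypothesis really yields this quotient as a continued fraction in the Laurent polynomials attached to the \emph{smaller} snake graph $\calg[a_2,\ldots,a_n]$, and those differ from $L_2,\ldots,L_n$ (defined relative to $\calg\cfa$) by alternating monomial factors $b_1/b_0$ and $b_0/b_1$. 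Converting one continued fraction into the other requires the rescaling identity $[ra_1,r^{-1}a_2,ra_3,\ldots]=r\,[a_1,a_2,\ldots]$ of Lemma~\ref{lem cf}; your remark that the $\cross$ denominators ``absorb'' the correction factors is precisely where this lemma must be invoked and should be made explicit. This is a fixable bookkeeping point, not a change of method.

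The genuine gap is in your coprimality argument, and it occurs in both of its steps. First, from a factorization $x_\zg=fg$ one may only conclude that the extremal homogeneous component of $f$ divides the extremal monomial of $x_\zg$, not that $f$ itself divides it; a divisor of a polynomial with monomial extremal terms need not be a monomial (compare $x+y$ dividing $x^2-y^2$, whose top and bottom terms in the $y$-grading are monomials). Second, the specialization $x_j\mapsto 1$, $y_j\mapsto 1$ cannot finish the argument: it only shows that a common factor $f$ satisfies $f(1,\ldots,1)=\pm1$, which does not force $f$ to be a unit; and for the monomial factors you say you want to ``rule out'' there is nothing to rule out, since Laurent monomials are already units in $\Z[x_1^{\pm1},\ldots,x_N^{\pm1}]$. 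The paper closes this gap differently, by counting \emph{terms} rather than evaluating: the numbers of terms of $x_\zg$ and $x_{\zg'}$ are the numbers of perfect matchings of $\calg\cfa$ and $\calg[a_2,\ldots,a_n]$, i.e.\ the numerator and denominator of $\cfa$, which are relatively prime integers by Theorem~\ref{thm1}; since the number of terms of a common divisor $f$ divides both of these, $f$ has a single term and is therefore a unit. Your specialization idea is adjacent to this, but you must apply Theorem~\ref{thm1} to the term count of the putative common factor, not to the value of the quotient at the point $(1,\ldots,1)$.
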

 
 This formula is of considerable computational interest, since the $L_i$ are given by an explicit formula, see Proposition \ref{prop hi}. 
 
 Theorem \ref{thm 1.2} suggests that one should study the quotients of certain elements of the cluster algebra and their asymptotic behavior. Using  techniques from continued fractions, we obtain the following result. In this theorem, we work in the cluster algebra of the once punctured torus for convenience.

\begin{thmIntro}\label{thm 1.3}
Let $\cala$ be 
 the cluster algebra of the once punctured torus with initial cluster $\x=(x_1,x_2,x_3)$. 
 Let $x'_1=(x_2^2+x_3^2)/x_1$ denote the cluster variable obtained from $\x$ by the single mutation $\mu_1$. Let  $u(i)$ denote the cluster variable obtained from $\x$ by the mutation sequence  $\mu_1\mu_2\mu_1\mu_2\mu_1\cdots$ of length $i$  and $\calg$ be its snake graph.
Let  $v(i)$ be the  Laurent polynomial corresponding to the snake graph 
obtained from $\calg$ by removing the south  edge $e_0$.  \\
(a) The quotient $u(i)/v(i)$   converges as $i$ tends to infinity and its limit is  
\[\za =  \frac{{ x'_1-x_1} +\sqrt{\left({ x'_1-x_1}\right)^2+4x_3^2}}{2x_3}. \]

The quotient of the cluster variables $u(i)/u(i-1)$ converges as $i$ tends to infinity and its limit is 
\[\zb
= \frac{x_1'+x_1+\sqrt{(x_1'-x_1)^2+4x_3^2}}{2x_2} .\]
(b)  Let $z={ \frac{ x'_1-x_1}{x_3} }$. Then $\za$ has the following  periodic continued fraction expansion
\[\za= \left\{\begin{array}{ll}\left[\,\overline{ z}\,\right] , &\textup{if $z  >0$;}\\ \\
\left[ 0, \overline{-z }\,\right] &\textup{if $z < 0$} .\end{array}\right.\]
 \end{thmIntro}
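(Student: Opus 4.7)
The plan is to apply Theorem~\ref{thm 1.2} to the cluster variables $u(i)$, which expresses $u(i)/v(i)$ as an explicit continued fraction of Laurent polynomials, and then pass to the limit using the classical theory of periodic continued fractions. Combined with the exchange relations of the torus cluster algebra, this also yields $\beta$.

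For the first step, let $\gamma_i$ denote the arc on the once-punctured torus corresponding to the mutation sequence $\mu_1\mu_2\cdots$ of length $i$. By Theorem~\ref{thm 1.2},
\[
\frac{u(i)}{v(i)} \;=\; [L_1, L_2, \ldots, L_{n_i}],
\]
where the Laurent polynomials $L_j$ are computed by Proposition~\ref{prop hi} applied to the snake graph $\calg_i = \calg_{\gamma_i}$. The main technical step is to verify, using the combinatorics of the triangulated torus (which has a single triangle up to the symmetry $x_1\leftrightarrow x_2$), that every $L_j$ is equal to
\[
L_j \;=\; \frac{x_2^2 + x_3^2 - x_1^2}{x_1 x_3} \;=\; \frac{x_1' - x_1}{x_3} \;=\; z,
\]
so that the continued fraction is purely periodic $[\,\overline{z}\,]$ with length $n_i \to \infty$.

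Given this, the first limit in part (a) follows from classical periodic continued fraction theory: the value $\alpha$ of $[\,\overline{z}\,]$ satisfies the fixed-point equation $\alpha = z + 1/\alpha$, i.e., $\alpha$ is the positive root of $\alpha^2 - z\alpha - 1 = 0$. Solving and substituting $z = (x_1'-x_1)/x_3$ yields the stated closed form for $\alpha$. For the second limit, I combine this with the Markov-type exchange relation
\[
u(i)\,u(i-2) \;=\; u(i-1)^2 + x_3^2 \qquad (i\geq 2)
\]
together with the identity $x_1 x_1' = x_2^2 + x_3^2$. A short elimination argument shows that $\beta := \lim u(i)/u(i-1)$ must satisfy the quadratic
\[
x_2\,\beta^2 \;-\; (x_1' + x_1)\,\beta \;+\; x_2 \;=\; 0,
\]
whose positive root is the stated formula for $\beta$.

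Part (b) is then a direct verification. When $z > 0$, the formula $\alpha = (z + \sqrt{z^2+4})/2$ is the standard value of the purely periodic simple continued fraction $[\,\overline{z}\,]$. When $z < 0$, rationalizing gives
\[
[\,0,\overline{-z}\,] \;=\; \frac{1}{[\,\overline{-z}\,]} \;=\; \frac{2}{-z + \sqrt{z^2+4}} \;=\; \frac{z + \sqrt{z^2+4}}{2},
\]
so the same closed form for $\alpha$ holds in both cases. The main obstacle is the computation in the first step, namely the verification via Proposition~\ref{prop hi} that every $L_j$ equals $z$; this requires careful bookkeeping of the tile labels on the snake graph $\calg_i$ using the specific combinatorics of the triangulated once-punctured torus.
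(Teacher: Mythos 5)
Your central technical step is false, and it is precisely the step you flag as "the main obstacle." For the mutation sequence $\mu_1\mu_2\mu_1\cdots$ on the once-punctured torus, $\calg(i)$ is the straight snake graph with $2i-1$ tiles, alternating tile labels $1,2,1,2,\dots$, with $e_0$ and all interior edges labelled $3$; its integer continued fraction is $[1,1,\dots,1]$, so every $\calh_j$ is a single edge with $x(\calh_j)=x_3$. Definition~\ref{def L} and Proposition~\ref{prop hi} then give $L_1=x_3/x_1$, $L_2=x_3x_1/x_2^2$, $L_3=x_3x_2^2/x_1^3$, and in general $L_j=\tfrac{x_3}{x_2}(x_1/x_2)^{j-1}$ for $j$ even and $L_j=\tfrac{x_3}{x_1}(x_2/x_1)^{j-1}$ for $j$ odd (this is equation~\eqref{eq51} in the paper). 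These entries are not constant in $j$, and $L_1=x_3/x_1$ equals $z=(x_1'-x_1)/x_3$ only when $x_1=x_2$. The identity $\za=[\,\overline{z}\,]$ in part (b) is a statement about the \emph{value} of the limit, not about the entries of the continued fraction produced by Theorem~\ref{thm 1.2}: expansions with non-integer entries are highly non-unique. So your fixed-point equation $\za=z+1/\za$ is asserted without justification, and both the existence of the first limit in (a) and its evaluation collapse with it. The paper's route is different: it proves pointwise convergence of $[L_1,L_2,\dots]$ for $x_1,x_2,x_3>0$ by the Seidel--Stern criterion, then uses the self-similarity $L_{j+2}=(x_1/x_2)^{\pm2}L_j$ together with the rescaling Lemma~\ref{lem cf} to obtain $[L_3,L_4,\dots]=\tfrac{x_2^2}{x_1^2}[L_1,L_2,\dots]$, whence the quadratic $x_3\za^2+(x_1-x_1')\za-x_3=0$; the periodic expansion $[\,\overline{z}\,]$, resp.\ $[0,\overline{-z}\,]$, is then verified separately exactly as in your part (b), which is correct.

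Your treatment of $\zb$ is the one genuinely salvageable and genuinely different piece. The exchange relation $u(i)u(i-2)=u(i-1)^2+x_3^2$ (with $u(-1)=x_1$, $u(0)=x_2$, $u(1)=x_1'$) does hold, and the standard consequence that $K=(u(i)+u(i-2))/u(i-1)$ is constant, equal to $(x_1+x_1')/x_2>2$, linearizes the recursion to $u(i)=Ku(i-1)-u(i-2)$; this yields both the convergence of $u(i)/u(i-1)$ and the quadratic $x_2\zb^2-(x_1'+x_1)\zb+x_2=0$ without any reference to $\za$, which is arguably more elementary than the paper's derivation of $\zb$ from a second continued fraction $[2,\overline{1}\,]$ and the relation $\zb=L_1'+1/\za'$. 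If you spell out that "short elimination argument" along these lines, part of (a) and all of (b) stand; but you still need a correct argument for $\za$ itself.
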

Similar formulas hold for other mutation sequences. In particular the analogous results  for the annulus with two marked points
is obtained  by setting $x_3=1$.

\smallskip
The results of this paper have applications in several lines of  research which will be studied in future work, including  cluster algebras and cluster categories of infinite rank \cite{HJ,GG, LP, CF}, continuous cluster categories \cite{IT},  Euler characteristic of quiver Grassmannians (see Corollary \ref{corgr} for a first result), string modules and string objects in the module categories and cluster categories of triangulated marked surfaces \cite{BZ,QZ, Labardini}, combinatorial structure of continued fractions, integer sequences, and friezes \cite{Cox}.

Continued fractions are ubiquitous in mathematics and  have also appeared in the context of cluster algebras before, although unrelated to our results.  
Nakanishi and Stella used them for constructing an initial seed of a cluster algebra in order to prove periodicity in sine-Gordon $Y$-systems in \cite{NaSt}. Di Francesco and Kedem obtained a very particular kind of (generalized) continued fractions in their computation of generating functions of cluster variables that lie along a specific sequence of mutations in a cluster algebra associated to a Cartan matrix. Their (generalized) continued fractions are of the form 
$a_1+\cfrac{b_1}{a_2+\cfrac{b_2}{a_3\cdots}}$ with numerators $b_i\ne 1$, and the $a_i$ depending on the $b_i$. When specializing all $b_i=1$, one obtains continued fractions of the form $[1,1,2,2,\ldots,2]$ \cite{DFK1}.
Our two approaches have an interesting overlap in the case of the cluster algebra of the Kronecker quiver which arises as a special case in two different ways. For Di Francesco and Kedem it corresponds to the case of the Cartan matrix $(a_{11})=(2)$ of rank 1, and for us to the annulus with 2 marked points on the boundary. In section  \ref{sect 5}, we compute examples of the asymptotic behavior of cluster variables for the torus with one puncture, from which the Kronecker case can be obtained by specializing one of the variables to 1. It would be interesting to compare our limit function with the generating function in \cite{DFK1} in this case.
 
The paper is organized as follows. In section \ref{sect prel}, we recall definitions and facts about snake graphs and continued fractions.  The construction of the snake graph $\calg\cfa$ of a continued fraction $\cfa$ is given in section \ref{sect 1}, and the bijection between snake graphs and continued fractions in section \ref{sect bijections}. In section \ref{sect 4} we list several identities of continued fractions that follow immediately from the snake graph calculus of \cite{CS}. The relation to cluster algebras is given in sections \ref{sect cluster}  and \ref{sect 5}.  In particular, the proof of Theorem \ref{thm 1.2} is  given in section \ref{sect cluster} and  Theorem~\ref{thm 1.3} in section \ref{sect 5}.
\section{Preliminaries}\label{sect prel}

\subsection{Cluster algebras from surfaces}\label{sect 1.1}
Cluster algebras were introduced by Fomin and Zelevinsky in \cite{FZ1}. 
A cluster algebra is a $\mathbb{Z}$-subalgebra of a field of rational functions $\mathbb{Q}(x_1,\ldots,x_{N})$
in $N$ variables. The cluster algebra is given by a set of generators, the cluster variables, which is constructed 
recursively from the initial cluster variables $x_1.\ldots,x_N$ by a process called mutation. It is known that each cluster variable is a Laurent polynomial in $x_1,\ldots,x_N$ with integer coefficients \cite{FZ1} and that these coefficients are non-negative \cite{LS4}. We refer the reader to \cite{FZ4} for the precise definition of cluster algebras.

In this paper, we are mainly interested in the special type of cluster algebras which are associated to  marked surfaces, see \cite{FST}.
Let $S$ be a connected oriented 2-dimensional Riemann surface with
(possibly empty)
boundary.  Fix a nonempty set $M$ of {\it marked points} in the closure of
$S$ with at least one marked point on each boundary component. The
pair $(S,M)$ is called a \emph{bordered surface with marked points}. Marked
points in the interior of $S$ are called \emph{punctures}.  

An \emph{arc} $\zg$ in $(S,M)$ is a curve in $S$, considered up
to isotopy, such that 
\begin{itemize}
\item[(a)] the endpoints of $\zg$ are in $M$;
\item[(b)] $\zg$ does not cross itself, except that its endpoints may coincide;
\item[(c)] except for the endpoints, $\zg$ is disjoint from $M$ and
  from the boundary of $S$,
\item[(d)] $\zg$ does not cut out an unpunctured monogon or an unpunctured bigon. 
\end{itemize}  
A \emph{generalized arc} is a curve which satisfies conditions (a),(c) and (d), but it can have selfcrossings.
Curves that connect two
marked points and lie entirely on the boundary of $S$ without passing
through a third marked point are  called \emph{boundary segments}.
By (c), boundary segments are not  arcs.

For any two arcs $\zg,\zg'$ in $(S,M)$, let $e(\zg,\zg')$ be the minimal
number of crossings of 
arcs $\za$ and $\za'$, where $\za$ 
and $\za'$ range over all arcs isotopic to 
$\zg$ and $\zg'$, respectively.
We say that two arcs $\zg$ and $\zg'$ are  \emph{compatible} if $e(\zg,\zg')=0$. 

An \emph{ideal triangulation} is a maximal collection of
pairwise compatible arcs (together with all boundary segments). 
The arcs of a 
triangulation cut the surface into \emph{ideal triangles}. 

In \cite{FST}, the authors associated a cluster algebra $\cala(S,M)$ to 
any bordered surface with marked points $(S,M)$, and showed that
the clusters of  $\cala(S,M)$ are in bijection 
with triangulations  of $(S,M)$, and
the cluster variables of  $\cala(S,M)$  are  in bijection
with the (tagged) arcs of $(S,M)$.

\subsection{Snake graphs}\label{sect 1.2} 
 Abstract snake graphs   have been introduced and studied \cite{CS,CS2,CS3} motivated by the snake graphs  appearing in the combinatorial formulas for elements in cluster algebras of surface type  in \cite{Propp,MS,MSW,MSW2}. In this section, we recall the main definitions.
Throughout
we fix the standard orthonormal basis of the plane.

 A {\em tile} $G$ is a square in the plane whose sides are parallel or orthogonal  to the elements in    the  fixed basis. All tiles considered will have the same side length.
\begin{center}
  {\scriptsize
\begingroup%
  \makeatletter%
  \providecommand\color[2][]{%
    \errmessage{(Inkscape) Color is used for the text in Inkscape, but the package 'color.sty' is not loaded}%
    \renewcommand\color[2][]{}%
  }%
  \providecommand\transparent[1]{%
    \errmessage{(Inkscape) Transparency is used (non-zero) for the text in Inkscape, but the package 'transparent.sty' is not loaded}%
    \renewcommand\transparent[1]{}%
  }%
  \providecommand\rotatebox[2]{#2}%
  \ifx\svgwidth\undefined%
    \setlength{\unitlength}{68.59006958bp}%
    \ifx\svgscale\undefined%
      \relax%
    \else%
      \setlength{\unitlength}{\unitlength * \real{\svgscale}}%
    \fi%
  \else%
    \setlength{\unitlength}{\svgwidth}%
  \fi%
  \global\let\svgwidth\undefined%
  \global\let\svgscale\undefined%
  \makeatother%
  \begin{picture}(1,0.70549664)%
    \put(0,0){\includegraphics[width=\unitlength]{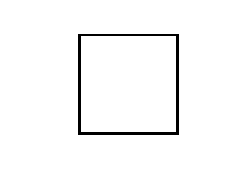}}%
    \put(0.49750979,0.31061177){\color[rgb]{0,0,0}\makebox(0,0)[lb]{\smash{$G$}}}%
    \put(0.00429408,0.31029854){\color[rgb]{0,0,0}\makebox(0,0)[lb]{\smash{West}}}%
    \put(0.79324631,0.31021311){\color[rgb]{0,0,0}\makebox(0,0)[lb]{\smash{East}}}%
    \put(0.36461283,0.61868105){\color[rgb]{0,0,0}\makebox(0,0)[lb]{\smash{North}}}%
    \put(0.36019916,0.00759722){\color[rgb]{0,0,0}\makebox(0,0)[lb]{\smash{South}}}%
  \end{picture}%
\endgroup%
}
\end{center}
We consider a tile $G$ as  a graph with four vertices and four edges in the obvious way. A {\em snake graph} $\calg$ is a connected planar graph consisting of a finite sequence of tiles $ \gi 12d$ with $d \geq 1,$ such that
$G_i$ and $G_{i+1}$ share exactly one edge $e_i$ and this edge is either the north edge of $G_i$ and the south edge of $G_{i+1}$ or the east edge of $G_i$ and the west edge of $G_{i+1}$,  for each $i=1,\dots,d-1$.
An example is given in Figure \ref{signfigure}.
\begin{figure}
\begin{center}
  {\tiny \scalebox{0.9}{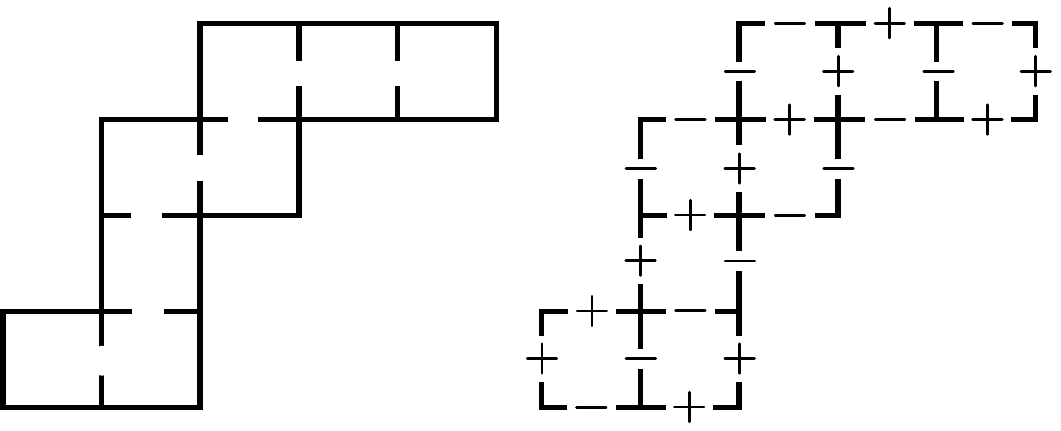}}
 \caption{A snake graph with 8 tiles and 7 interior edges (left);
 a sign function on the same snake graph (right)} 
 \label{signfigure}
\end{center}
\end{figure}
The graph consisting of two vertices and one edge joining them is also considered a snake graph.

The $d-1$ edges $e_1,e_2, \dots, e_{d-1}$ which are contained in two tiles are called {\em interior edges} of $\calg$ and the other edges are called {\em boundary edges.}  
Denote by $\Int(\calg)=\{e_1,e_2,\ldots,e_{d-1}\}$  the set of interior edges of $\calg$.
We will always use the natural ordering of the set of interior edges, so that $e_i$ is the edge shared by the tiles $G_i$ and $G_{i+1}$.

We denote by  $\calgSW$ the 2 element set containing the south and the west edge of the first tile of $\calg$ and by $\calgNE$ the 2 element set containing the north and the east edge of the last tile of $\calg$. We will sometimes refer to the two edges in $\calgSW$ as the \emph{south edge} of $\calg$ and the  \emph{west edge} of $\calg$, respectively. Similarly, the two edges in $\calgNE$ will be called the \emph{north edge} of $\calg$ and the  \emph{east edge} of $\calg$, respectively. 
 If $\calg$ is a single edge, we let $\calgSW=\emptyset$ and $\calgNE=\emptyset$. 

A snake graph $\calg$ is called {\em straight} if all its tiles lie in one column or one row, and a snake graph is called {\em zigzag} if no three consecutive tiles are straight.
 We say that two snake graphs are \emph{isomorphic} if they are isomorphic as graphs.

A {\em sign function} $f$ on a snake graph $\calg$ is a map $f$ from the set of edges of $\calg$ to $\{ +,- \}$ such that on every tile in $\calg$ the north and the west edge have the same sign, the south and the east edge have the same sign and the sign on the north edge is opposite to the sign on the south edge. See Figure \ref{signfigure} for an example.
%
%

Note that on every snake graph  there are exactly two sign functions. A snake graph is  determined up to symmetry by its sequence of tiles together with a sign function on its interior edges.
If $\calh$ is a subgraph of $\calg$, we write $\calg\setminus\calh$ for the full subgraph of $\calg$ whose vertices are in $\calg \setminus\calh$. For example, if $\calg$ is the snake graph of 8 tiles in Figure~\ref{signfigure} then $\calg\setminus G_8$ is the snake graph consisting of the first 6 tiles. 

\subsubsection{Infinite snake graphs} We define \emph{infinite snake graphs}  $\calg$ analogously using an infinite sequence of tiles $(G_1,G_2,\ldots)$. If $\calg$ is an infinite snake graph then $\calgSW$ is defined as above, but $\calgNE $ does not exist.

\subsection{Labeled snake graphs from surfaces} \label{sect label}
In this subsection we recall the definition of snake graphs of \cite{MSW}. We follow the exposition in \cite{S}.

Let $T$ be an ideal triangulation of a surface $(S,M)$ and let
$\zg$ be an   arc in $(S,M)$ which is not in $T $. 
Choose an orientation on $\zg$, let $s\in M$ be its starting point, and let $t\in M$ be its endpoint. 
 Denote by 
\[ s=p_0, p_1, p_2, \ldots, p_{d+1}=t
\]
the points of intersection of $\zg$ and $T $ in order.  For $j=1,2,\ldots,d$, 
let  $\tau_{i_j}$ be the arc of $T $ containing $p_j$, and let 
$\zD_{j-1}$ and 
$\zD_{j}$ be the two ideal triangles in $T $ 
on either side of 
$\tau_{i_j}$.  
Then, for $j=1,\ldots,d-1,$ the arcs  $\tau_{i_j}$ and $\tau_{i_{j+1}}$ form two sides of the  triangle $\zD_j$ in $T$ and we  define $e_j$ to be the third arc in this triangle, see Figure~\ref{gammafig}.
\begin{figure}
\begin{center}
  {\scriptsize \scalebox{1}{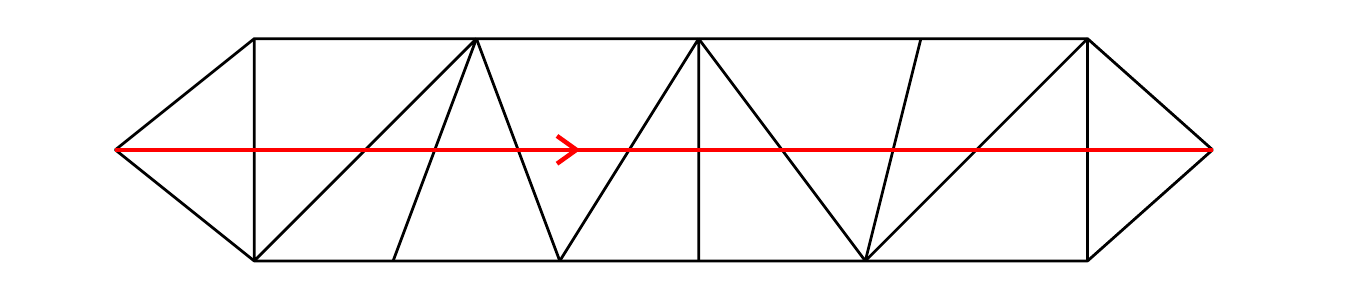}}
 \caption{The arc $\zg$ passing through $d+1$ triangles $\zD_0,\ldots,\zD_d$} 
 \label{gammafig}
\end{center}
\end{figure}

Let $G_j$ be the  quadrilateral in $T$ that contains $\tau_{i_j}$ as a diagonal. We will think of $G_j$ as a {\em tile} as in section \ref{sect 1.2}, but now the edges of the tile are arcs in $T$ and thus are labeled edges. We also think of the tile $G_j$ itself being labeled by the diagonal $\tau_{i_j}$.

Define a \emph{sign function} $f$ on the edges $e_1,\ldots,e_d$ by 
\[f(e_j)=\left\{\begin{array}{ll}+1 &\textup{if $e_j$ lies on the right of $\gamma$ when passing through $\zD_j$}\\-1 &\textup{otherwise.}\end{array}\right.\]

The  labeled snake graph  $\calg_\zg=(G_1,\ldots,G_d)$ with tiles $G_i$ and sign function $f$ is called the {\em snake graph associated to the arc} $\zg$. Each edge $e$ of $\calg_\zg$ is labeled by an arc $\tau(e)$ of the triangulation $T$. We define the \emph{weight} $x(e)$ of the edge $e$ to be cluster variable associated to the arc $\tau(e)$. Thus $x(e)=x_{\tau(e)}$.

The main result of \cite{MSW} is a combinatorial formula for the cluster variables in $\cala(S,M)$. Let $\zg$ be an arc and $x_\zg$ the corresponding cluster variable.  A \emph{perfect matching} $P$ of $\calg_\zg$ is a subset of the set of edges of $\calg_\zg$ such that each vertex of $\calg_\zg$ is incident to exactly one edge in $P$.
Then, if the triangulation has no self-folded triangles, we have
\[x_\zg=\frac{1}{\cross(\calg_\zg)} \sum_{P\in \Match \calg_\zg} x(P),\]
where the sum runs over all perfect matchings of $\calg_\zg$,  the summand $x(P)=\prod_{e\in P} x(e)$ is the weight of the perfect matching $P$, and $\cross(\calg)=\prod_{j=1}^d x_{i_j}$ is the product (with multiplicities) of all initial cluster variables whose arcs are crossed by $\zg$.
In the presence of self-folded triangles, the combinatorial formula in \cite{MSW} is slightly more complicated.

\subsection{Continued fractions}\label{sect 1.3}

Continued fractions are  remarkably influential in many areas of mathematics. For a standard introduction, we refer to  \cite[Chapter 10] {HW}. For a more extensive treatment of continued fractions see \cite{Perron}.

A \emph{finite continued fraction} is a function 
\[[a_1,a_2,\ldots,a_n]= a_1+\cfrac{1}{a_2+\cfrac{1}{a_3+\cfrac{1}{\ddots +\cfrac{1}{a_n}}}}\]
of $n$ variables $a_1,a_2,\ldots,a_n$. 
Similarly, an \emph{infinite continued fraction} is a function 
\[[a_1,a_2,\ldots]= a_1+\cfrac{1}{a_2+\cfrac{1}{a_3+\cfrac{1}{\ddots }}}\]
of infinitely many variables $a_1,a_2,\ldots$ 
We use the notation $[\overline{a}]$ for the periodic continued fraction $[\overline{a}]=[a,a,a,\ldots]$, where $a_i=a$, for all $i$.
Note that 
\[\cfa=a_1+\frac{1}{[a_2,\ldots,a_n]}\qquad \textup{ and }\qquad[\overline{a}]=a+\frac{1}{[\overline{a}]}.\]
 
In sections \ref{sect 1} -- \ref{sect 4} of this paper, we are interested in finite continued fractions where each $a_i$ is an integer. However, in section~\ref{sect cluster}, we consider finite continued fractions of Laurent polynomials  as well as finite continued fractions of complex numbers, and in section \ref{sect 5}, infinite continued fractions of Laurent polynomials. 

We say that a continued fraction $[a_1,a_2,\ldots,a_n]$ is  \emph{positive} if each $a_i\in \mathbb{Z}_{\ge 1}$.
We say that a continued fraction $[a_0,a_1,\ldots,a_n]$ is  \emph{simple} if $a_0\in\mathbb{Z} $ and $a_i\in\mathbb{Z}_{\ge 1}$, for each $i\ge 1$.

Note that if $\cfa$ is a simple continued fraction with $a_n=1$, then $\cfa=[a_1,a_2,\ldots,a_{n-1}+1]$. Thus it suffices to consider continued fractions whose last coefficient is at least 2. The following classical result shows that this  is the only ambiguity for finite continued fractions. 
\begin{prop}
 \cite[Theorem 162]{HW} \label{thm162}
 \begin{itemize}
\item [\textup{(a)}]
There is a bijection between  $\mathbb{Q}_{> 1}$  and the set of finite positive continued fractions whose last coefficient is at least 2. 
\item [\textup{(b)}]
There is a bijection between  $\mathbb{Q}$  and the set of finite simple continued fractions whose last coefficient is at least 2. 
\end{itemize}
\end{prop}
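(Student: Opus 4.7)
The plan is to prove both parts simultaneously via the Euclidean algorithm, using the recursion $[a_1,a_2,\ldots,a_n]=a_1+1/[a_2,\ldots,a_n]$ to recover the coefficients one at a time as floors. The argument splits into three ingredients: (i) existence of an expansion, via the Euclidean algorithm; (ii) the output satisfies the condition $a_n\ge 2$; (iii) uniqueness, by extracting $a_1$ as the integer part of the value.

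For part (a), given $p/q\in\mathbb{Q}_{>1}$ in lowest terms, I would run the Euclidean algorithm: set $r_{-1}=p$, $r_0=q$, and iterate $r_{i-1}=a_{i+1}r_i+r_{i+1}$ with $0\le r_{i+1}<r_i$ until some $r_{n+1}=0$. Since $\gcd(p,q)=1$, the final nonzero remainder is $r_n=1$, and the last step $r_{n-1}=a_{n+1}\cdot 1$ with $r_{n-1}>1$ forces $a_{n+1}\ge 2$. Unrolling the relations gives $p/q=[a_1,\ldots,a_{n+1}]$, a positive continued fraction whose last coefficient is at least $2$. Conversely, every such continued fraction evaluates to a rational strictly greater than $1$: by induction on $n$, one has $[a_1,\ldots,a_n]=a_1+1/[a_2,\ldots,a_n]$ with $a_1\ge 1$ and $[a_2,\ldots,a_n]>0$, so the value exceeds $1$.

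For uniqueness, I would establish the auxiliary claim that for any positive continued fraction $[b_1,\ldots,b_m]$ with $b_m\ge 2$, one has $[b_1,\ldots,b_m]>1$ (as just shown), and moreover $[b_1,\ldots,b_m]$ is an integer if and only if $m=1$: indeed, for $m\ge 2$ the tail $[b_2,\ldots,b_m]$ also has last coefficient $\ge 2$, hence is strictly larger than $1$, so $1/[b_2,\ldots,b_m]\in(0,1)$ and the total value is non-integer. It follows that whenever two such continued fractions are equal, they either both have length $1$ (and are equal integers), or both have length $\ge 2$, in which case $a_1=\lfloor [a_1,\ldots,a_n]\rfloor=\lfloor [b_1,\ldots,b_m]\rfloor=b_1$ and the tails $[a_2,\ldots,a_n]=[b_2,\ldots,b_m]$ are equal. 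Induction on the length (or equivalently on the denominator, which strictly decreases under taking $1/(p/q-a_1)$) concludes uniqueness, so the two maps are mutually inverse.

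For part (b), the argument is the same with an initial integer $a_0$ prepended. Given $p/q\in\mathbb{Q}$ in lowest terms with $q>0$, set $a_0=\lfloor p/q\rfloor\in\mathbb{Z}$; then $p/q-a_0=r_1/q\in[0,1)$. If $r_1=0$ the expansion is the single integer $[a_0]$; otherwise apply part (a) to $q/r_1\in\mathbb{Q}_{>1}$ to obtain $[a_1,\ldots,a_n]$ with $a_n\ge 2$, whence $p/q=[a_0,a_1,\ldots,a_n]$. Uniqueness of $a_0$ again comes from the floor characterization, and uniqueness of the tail is part (a). The main obstacle I anticipate is the careful bookkeeping around the boundary case $n=1$ (where the "last coefficient $\ge 2$" condition collapses with the ``first coefficient'' condition) and the use of coprimality to guarantee $a_n\ge 2$ at termination of the Euclidean algorithm; once these edge cases are separated out, the induction is routine.
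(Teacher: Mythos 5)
Your proof is correct, but note that the paper offers no proof of this proposition at all: it is quoted verbatim from Hardy--Wright \cite[Theorem 162]{HW}, so there is no internal argument to compare against. What you have written is essentially the classical Hardy--Wright proof itself: existence of the expansion via the Euclidean algorithm (with coprimality forcing the final quotient to be at least $2$), and uniqueness via the observation that a positive continued fraction with last coefficient $\ge 2$ and length $\ge 2$ has value in $(a_1,a_1+1)$, so the coefficients are recovered as successive integer parts. All the steps check out, including the termination argument $r_{n-1}>r_n=1\Rightarrow a_{n+1}\ge 2$ and the descent on denominators for uniqueness.

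The one wrinkle worth making explicit --- which you already flag --- is the length-one case in part (b): as literally stated, an integer such as $0$ or $1$ has no simple continued fraction expansion ``with last coefficient at least $2$,'' so the bijection only holds if one exempts the one-term expansions $[a_0]$ from that condition (equivalently, imposes it only when $n\ge 1$). This is an imprecision inherited from the paper's paraphrase of the classical statement, not a gap in your argument.
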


\begin{example}
We have $[2,3,4]=2+\cfrac{1}{3+\cfrac{1}{4}}=2+\cfrac{4}{13} =\cfrac{30}{13}$. \end{example}
We shall need the following general fact about continued fractions.

\begin{lem}
\label{lem cf} 
$[ra_1,r^{-1}a_2,ra_3,\ldots,r^{(-1)^{n+1}} { a_n}] =r\,\cfa$. 
\end{lem}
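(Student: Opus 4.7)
The plan is to proceed by induction on $n$, the length of the continued fraction. The base case $n=1$ is trivial since $[ra_1] = ra_1 = r[a_1]$.

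For the inductive step, I would use the recursive identity $[b_1,b_2,\ldots,b_n]=b_1+\tfrac{1}{[b_2,\ldots,b_n]}$ that was recalled in the excerpt. Applied to the left-hand side, this gives
\[
[ra_1, r^{-1}a_2, ra_3, \ldots, r^{(-1)^{n+1}}a_n] = ra_1 + \cfrac{1}{[r^{-1}a_2, ra_3, r^{-1}a_4,\ldots, r^{(-1)^{n+1}}a_n]}.
\]
The inner continued fraction has length $n-1$, and its multipliers on the entries $a_2,a_3,\ldots,a_n$ are $r^{-1}, r, r^{-1}, \ldots$, which is exactly the same alternating pattern as in the statement, but with $r$ replaced by $r^{-1}$.

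The only subtle point to check is that the exponent of the final multiplier matches the statement of the lemma when $r$ is replaced by $r^{-1}$ and the length changes from $n$ to $n-1$. The final multiplier is $r^{(-1)^{n+1}} = (r^{-1})^{(-1)^{n}} = (r^{-1})^{(-1)^{(n-1)+1}}$, which is exactly the form required for a length $(n-1)$ continued fraction with parameter $r^{-1}$. Hence the inductive hypothesis (applied with $r^{-1}$ in place of $r$) yields
\[
[r^{-1}a_2, ra_3,\ldots, r^{(-1)^{n+1}}a_n] = r^{-1}\,[a_2,a_3,\ldots,a_n].
\]

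Substituting back, the right-hand side becomes
\[
ra_1 + \cfrac{1}{r^{-1}[a_2,\ldots,a_n]} = ra_1 + \cfrac{r}{[a_2,\ldots,a_n]} = r\left(a_1 + \cfrac{1}{[a_2,\ldots,a_n]}\right) = r\,[a_1,a_2,\ldots,a_n],
\]
which completes the induction. The argument is essentially mechanical; the only place one must be careful is in bookkeeping the alternating exponents of $r$ when passing from length $n$ to length $n-1$, so I would make sure to display the exponent check explicitly to avoid sign errors.
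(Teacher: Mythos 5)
Your proof is correct and follows essentially the same route as the paper: induction on $n$, peeling off the first entry and applying the inductive hypothesis with $r^{-1}$ in place of $r$ to the length-$(n-1)$ tail. The explicit exponent check you include is a nice touch but the argument is otherwise identical to the paper's.
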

\begin{proof}
If $ n=1$, both sides of the equation are equal to $ra_1$. Suppose $n>1$. Then
\[\begin{array}
 {rcl} [ra_1,r^{-1}a_2,ra_3,\ldots,r^{(-1)^{n+1}}  a_n] &=& \displaystyle ra_1+\frac{1}{[r^{-1}a_2,ra_3,\ldots,r^{(-1)^{n+1}}  a_n] }\\ \\
 &=&\displaystyle ra_1+\frac{1}{r^{-1}[a_2,a_3,\ldots, a_n] }\quad \textup{(by induction)} \\ \\
   &=&\displaystyle r \left( a_1+\frac{1}{[a_2,a_3,\ldots, a_n]}\right)\\ \\
   &=&
  r\,
  \cfa. \hfill \qedhere
\end{array}\]
\end{proof}
\begin{example}
We have $[2,3,4]=\cfrac{30}{13}$, and $[2\cdot 13, \,\cfrac{3}{13}\,, 4\cdot13]=30$. 
\end{example}

For infinite continued fractions we have the following classical result.
\begin{prop}\cite[Theorem 170]{HW}
 \label{thm162R}
\begin{itemize}
\item [\textup{(a)}]
There is a bijection between  $\mathbb{R}_{>1}\setminus \mathbb{Q}_{> 1}$  and the set of infinite positive continued fractions.
\item [\textup{(b)}]
There is a bijection between  $\mathbb{R}\setminus\mathbb{Q}$  and the set of infinite simple continued fractions.
\end{itemize}
\end{prop}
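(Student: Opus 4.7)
The plan is to establish the bijection by constructing its two directions explicitly and showing they are mutually inverse. I will focus on part (a); part (b) follows by the simple observation that a simple continued fraction $[a_1,a_2,a_3,\ldots]$ with $a_1\in\mathbb Z$ equals $a_1+1/[a_2,a_3,\ldots]$, and $[a_2,a_3,\ldots]>1$ is covered by part (a), so (b) reduces to (a) by translating by an integer.

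For the map from continued fractions to reals, I would define the convergents $C_n=[a_1,\ldots,a_n]=p_n/q_n$ by the standard recursion
\[ p_n=a_np_{n-1}+p_{n-2},\qquad q_n=a_nq_{n-1}+q_{n-2},\]
with $p_{-1}=1,p_0=a_1,q_{-1}=0,q_0=1$. A routine induction gives the key identity $p_nq_{n-1}-p_{n-1}q_n=(-1)^n$, hence
\[ C_n-C_{n-1}=\frac{(-1)^{n+1}}{q_nq_{n-1}}.\]
Since $a_i\ge 1$, the denominators $q_n$ satisfy $q_n\ge q_{n-1}+q_{n-2}$, so they dominate the Fibonacci sequence and grow to infinity. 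Therefore the alternating series $\sum(C_n-C_{n-1})$ converges absolutely, the even convergents increase to a limit, the odd convergents decrease to the same limit, and the common value $\alpha=\lim C_n$ exists and satisfies $\alpha>1$ (since $a_1\ge 1$ and the expansion is infinite).

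Irrationality of $\alpha$ is the first substantive point. If $\alpha=p/q$ with $q\ge 1$, then from $|\alpha-C_n|<1/(q_nq_{n+1})$ we get $|pq_n-qp_n|<q/q_{n+1}$; for $n$ large enough the right side is less than $1$, forcing $pq_n=qp_n$, and then $\alpha=p_n/q_n=C_n$, contradicting the strict monotonicity established above. This gives injectivity of the map as well, because distinct infinite positive continued fractions either differ at some first index (where one can read off $\lfloor\alpha\rfloor$ and the tail) or are equal.

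For the inverse direction, given $\alpha\in\mathbb R_{>1}\setminus\mathbb Q_{>1}$, I run the Gauss algorithm: set $\alpha_1=\alpha$, $a_i=\lfloor\alpha_i\rfloor\ge 1$, and $\alpha_{i+1}=1/(\alpha_i-a_i)$. Since $\alpha_i$ is irrational, $\alpha_i-a_i\in(0,1)$ is nonzero and $\alpha_{i+1}>1$ is again irrational, so the algorithm never terminates and produces a positive sequence $(a_i)$. A direct calculation yields the finite identity
\[ \alpha=[a_1,a_2,\ldots,a_n,\alpha_{n+1}]=\frac{\alpha_{n+1}p_n+p_{n-1}}{\alpha_{n+1}q_n+q_{n-1}},\]
and comparing this with $C_n=p_n/q_n$ via the determinant identity gives $|\alpha-C_n|<1/(q_nq_{n+1})\to 0$. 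Hence $[a_1,a_2,\ldots]=\alpha$, which shows surjectivity. Uniqueness of the expansion follows because the recipe $a_1=\lfloor\alpha\rfloor$, $\alpha_2=1/(\alpha-a_1)$ is forced: in any expansion $\alpha=[a_1,a_2,\ldots]$ the tail $[a_2,a_3,\ldots]$ lies in $(1,\infty)$, so $\alpha-a_1\in(0,1)$ and $a_1=\lfloor\alpha\rfloor$, after which one inducts on the tail $\alpha_2$.

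The main obstacle is the convergence-plus-irrationality step: one must simultaneously control the speed of convergence of $C_n$ (via the determinant identity and the Fibonacci-type lower bound on $q_n$) and use that speed to rule out rationality of the limit. Everything else—recursive formulas, the Gauss map, and uniqueness—is then routine bookkeeping.
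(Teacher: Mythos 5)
The paper does not prove this proposition at all---it is quoted as \cite[Theorem 170]{HW}---and your argument is precisely the standard Hardy--Wright proof (alternating convergents controlled by the determinant identity, Fibonacci-type growth of the $q_n$, the speed-of-convergence argument for irrationality of the limit, and the Gauss algorithm plus the floor-function uniqueness argument for the inverse), so it is correct and matches the intended source. The only blemish is an off-by-one in your initial conditions: with $p_{-1}=1$, $p_0=a_1$, $q_{-1}=0$, $q_0=1$ and $p_n=a_np_{n-1}+p_{n-2}$, the convention $C_n=[a_1,\ldots,a_n]=p_n/q_n$ gives $C_1=(a_1^2+1)/a_1\neq[a_1]$; this is harmless bookkeeping and does not affect the argument.
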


\section{The snake graph of a continued fraction}\label{sect 1}
We have seen in section \ref{sect 1.2} that snake graphs appear naturally in the theory of cluster algebras from surfaces, where they are used to compute the Laurent expansions of the cluster variables. The terms in the Laurent polynomial of a cluster variable are parametrized by the perfect matchings of the associated snake graph. 

In this section, we present a new connection between snake graphs and continued fractions. 
For every positive continued fraction $\cfa$, we construct  a snake graph $\cfg$ in such a way that the number of perfect matchings of the snake graph is equal to the numerator of the continued fraction.

Recall that a snake graph $\calg$ is determined by a sequence of tiles $G_1,\ldots,G_d$ and a sign function $f$ on the interior edges $ \Int \calg$ of $\calg$. As usual, we denote the interior edges by $e_1,\ldots,e_{d-1}$.  Denote by $e_0\in\calgSW$ the south edge of $\calg$ and choose an edge $e_d\in\calgNE$. The sign function can be extended in a unique way to all edges of $\calg$. In particular, we obtain a sign sequence 
\begin{equation}
 \label{seq}
 (f(e_0) , f(e_1) ,\ldots ,f(e_{d-1}), f(e_d)).
\end{equation}
This sequence uniquely determines the snake graph and a choice of a northeast edge $e_d\in \calgNE$. 

\medskip

Now let $\cfq$ be a positive continued fraction, and let $d= a_1+a_2+\cdots +a_n -1$.
Consider the following sign sequence
\begin{equation}
 \label{eqsign} 
\begin{array}{cccccccc}
  ( \underbrace{ -\ze,\ldots,-\ze},&  \underbrace{ \ze,\ldots,\ze},&  \underbrace{ -\ze,\ldots,-\ze},& \ldots,&  \underbrace{\pm\ze,\ldots,\pm\ze}) ,  \\
 a_1 & a_2 & a_3&\ldots&a_n
\end{array} 
\end{equation}
where  $\ze \in \{ -,+ \}$, $-\ze=\left\{\begin{array}{cl} + &\textup{if $\ze=-$;}\\ - &\textup{if $\ze=+$.}\end{array}\right. $  We define  $\sgn(a_i) = \left\{\begin{array}{cl} -\ze &\textup{if $i$ is odd;}\\ \ze &\textup{if $i$ is even.}\end{array}\right. $
 Thus each integer $a_i$ corresponds to a maximal subsequence of constant sign $\sgn(a_i)$ in the sequence (\ref{eqsign}).

We let $\ell_i$ denote  the position of the last term in the $i$-th subsequence, thus  
$\ell_i = \sum_{j=1}^{i} a_{j}.$

\begin{definition}\label{def}
 The snake graph
 $\calg[a_1,a_2,\ldots,a_n] $ of the positive continued fraction $[a_1,a_2,\ldots,a_n]$ is the snake graph with $d$ tiles determined by the sign sequence (\ref{eqsign}). 
\end{definition}
 An example is given in Figure \ref{fig 8437}. In the special case where $n=1$ and $a_1=1$, this definition means that $\calg[1]$ is a single edge.

\begin{figure}
\begin{center}
 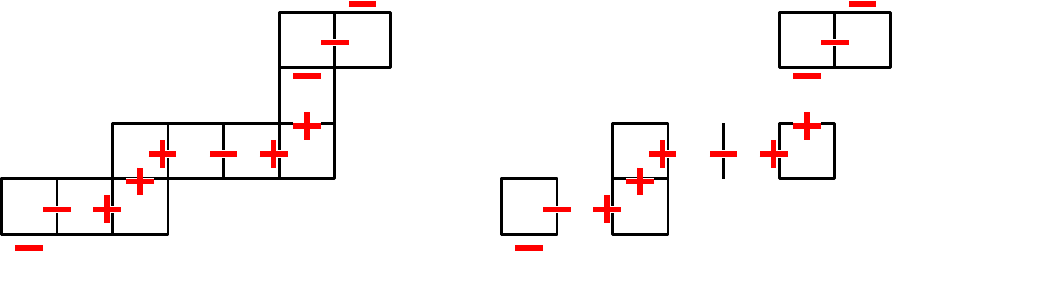
 \caption{The snake graph of the continued fraction $[2,3,1,2,3]$. The sign sequence $(-,-,+,+,+,-,+,+,-,-,-)=(f(e_0),f(e_1),\ldots, f(e_{10}))$ is given in red. 
 The sign changes occur in the tiles $G_{\ell_i}$, with $\ell_i=2,5,6,8$. The subgraphs $\calh_1,\ldots,\calh_5$ on the right are obtained by  removing the boundary edges of the tiles $G_{\ell_i},$  for $i=1,\ldots, n.$} \label{fig 8437}
\end{center}
\end{figure}

Let $f\colon\{e_0,e_1,\ldots,e_d\}\to\{\pm\}$ denote the sign function of $\cfg$ induced by the sign sequence.

\begin{lem}\label{lem1}
For each $i =1, 2, \ldots, n-1$, the subsnake graph of $\calg\cfq$ consisting of the three consecutive tiles $(G_{\ell_i-1},G_{\ell_i},G_{\ell_i+1}) $ is straight.
\end{lem}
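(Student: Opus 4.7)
The plan is to reduce the claim to a purely local statement about the two interior edges adjacent to $G_{\ell_i}$, namely $e_{\ell_i-1}$ and $e_{\ell_i}$, and then read off the conclusion directly from the sign sequence (\ref{eqsign}).

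First I would isolate and record the following local straightness criterion: three consecutive tiles $G_{j-1},G_j,G_{j+1}$ of a snake graph form a straight subsnake graph if and only if $f(e_{j-1})\neq f(e_j)$. This comes directly from the definition of a sign function in Section~\ref{sect 1.2}. On any tile, the pair (north, west) carries one sign and the pair (south, east) carries the opposite sign. Thus the edges $e_{j-1}$ and $e_j$ of $G_j$ carry opposite signs precisely when $\{e_{j-1},e_j\}$ equals $\{\text{south},\text{north}\}$ or $\{\text{west},\text{east}\}$, i.e.\ when $G_{j-1}$ and $G_{j+1}$ are glued to opposite sides of $G_j$; this is exactly the configuration in which the three tiles lie in one row or one column. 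Conversely, agreement of signs forces the two tiles to be glued to two adjacent sides of $G_j$, which produces an L-turn.

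Second, I would specialise this criterion to $j=\ell_i$. By Definition~\ref{def} and formula (\ref{eqsign}), the sign sequence $(f(e_0),\ldots,f(e_d))$ is partitioned into $n$ constant-sign blocks of lengths $a_1,\ldots,a_n$, with the $i$-th block carrying sign $\sgn(a_i)$ and consecutive blocks carrying opposite signs. Since $\ell_i=\sum_{j=1}^i a_j$, the index $\ell_i-1$ is the last position of the $i$-th block and $\ell_i$ is the first position of the $(i+1)$-th block, so
\[
f(e_{\ell_i-1})=\sgn(a_i)\neq \sgn(a_{i+1})=f(e_{\ell_i}).
\]
Combining this with the criterion from the previous paragraph yields straightness of $(G_{\ell_i-1},G_{\ell_i},G_{\ell_i+1})$ for all $i=1,\ldots,n-1$.

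The principal obstacle is the first step: pinning down the sign-versus-shape dictionary cleanly, since the paper uses it implicitly but does not package it as a standalone statement. Once that dictionary is written out carefully by inspecting the four ways in which $e_{j-1}$ and $e_j$ can sit on the boundary of $G_j$, the rest of the argument is pure bookkeeping in the indices $\ell_i$ of (\ref{eqsign}).
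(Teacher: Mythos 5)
Your proposal is correct and is essentially the paper's own argument: the paper's one-line proof ("this follows immediately from the construction, since $f(e_{\ell-1})\neq f(e_{\ell})$") is exactly your sign-change observation at the block boundary, and your first step merely makes explicit the sign-versus-shape dictionary that the paper leaves implicit in its definition of the sign function. The case analysis you outline for that dictionary is the right one and checks out.
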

 
\begin{proof} This follows immediately from the construction, since $f(e_{\ell-1 })\neq f(e_{ \ell})$.
\end{proof}

Next we define zigzag subsnake graphs $\calh_1,\ldots,\calh_n$ of $\calg=\calg\cfq $ as follows. Let
\[
\begin{array}
 {rcl}
 \calh_1 &=& (G_1,\ldots ,G_{\ell_1-1})  \\
\calh_{2} &=&(G_{\ell_1+1},\ldots ,G_{\ell_2-1})  \\ 
&\vdots&\\
\calh_{i} &=&(G_{\ell_{i-1}+1},\ldots ,G_{\ell_i-1})  \\ 
&\vdots&\\
 \calh_n &=&(G_{\ell_{n-1}+1},\ldots,G_d),
 \end{array}\]
where $(G_j,\ldots,G_k)$ is the subsnake graph of $\calg$ consisting of the tiles $G_j,G_{j+1},\ldots,G_k$, if $j\le k$, and  $(G_{j+1},\ldots,G_j)$ is the single edge $e_j$,  see Figure \ref{fig 8437} for an example.

\begin{lem}\label{lem2} For each $i =1, 2, \ldots, n$, the graph
 $\calh_i$ is a zigzag snake graph with $a_i-1$ tiles. 
  In particular $\calh_i\cong\calg[a_i]$  is the snake graph of the continued fraction $[a_i]$ with a single coefficient $a_i$. 
\end{lem}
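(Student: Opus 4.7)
The plan is to verify the two assertions separately and then combine them. First, I count the tiles of $\calh_i$. Adopting the convention $\ell_0=0$ and $\ell_n=d+1$, the subgraph $\calh_i=(G_{\ell_{i-1}+1},\ldots,G_{\ell_i-1})$ contains exactly $\ell_i-\ell_{i-1}-1=a_i-1$ tiles; if $a_i=1$ this forces $\calh_i$ to be a single edge, which is consistent with the convention recorded just before the statement.

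Next, I show that $\calh_i$ is zigzag. Its interior edges are precisely $e_{\ell_{i-1}+1},\ldots,e_{\ell_i-2}$, all of which are strictly inside the $i$-th maximal constant-sign block of the sign sequence (\ref{eqsign}) and therefore all carry the common sign $\sgn(a_i)$. I then invoke the elementary fact that three consecutive tiles $G_{k-1}, G_k, G_{k+1}$ of a snake graph are collinear (``straight at $G_k$'') if and only if $f(e_{k-1})\neq f(e_k)$. This can be verified by a one-line case analysis on whether each of the interior edges $e_{k-1}$, $e_k$ is a horizontal (E/W) or a vertical (N/S) edge of the middle tile $G_k$, combined with the defining rule that N, W share one sign on every tile while S, E share the opposite. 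Since the interior edges of $\calh_i$ all carry the same sign, every inner tile of $\calh_i$ is a turn, so no three consecutive tiles of $\calh_i$ are straight, which is the definition of zigzag.

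Finally, I identify $\calh_i$ with $\calg[a_i]$. Applying Definition~\ref{def} to the one-coefficient continued fraction $[a_i]$, the graph $\calg[a_i]$ has $a_i-1$ tiles and its sign sequence is a single constant string of length $a_i$, so by the same criterion it is zigzag. Since, up to the obvious planar reflection symmetry, a zigzag snake graph is determined as a graph by the number of its tiles, we conclude $\calh_i\cong\calg[a_i]$. The only delicate point in the whole argument is the equivalence between constancy of the sign function and the presence of a turn; once that is established, the rest is bookkeeping with the indices $\ell_i$.
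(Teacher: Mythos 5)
Your proof is correct and follows essentially the same route as the paper, which simply observes that the interior edges of $\calh_i$ all carry the sign $\sgn(a_i)$ and that $\calh_i$ has $a_i-1$ tiles, and concludes. You merely make explicit the sign-function criterion for straightness (straight at $G_k$ iff $f(e_{k-1})\neq f(e_k)$), which the paper leaves implicit here but uses in the same form in the proof of Lemma~\ref{lem1}.
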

\begin{proof}
 By construction, the interior edges of $\calh_i$  all have the same sign $\sgn(a_i)$, and $\calh_i$ has $a_i-1$ tiles.  This implies  that  $\calh_i\cong\calg[a_i]$ and that it is a zigzag snake graph. 
\end{proof}

We are now ready for the main result of this section.
\begin{thm}\label{thm1} \begin{itemize}
\item [\textup{(a)}]
 The number of perfect matchings of $\calg[a_1,a_2,\ldots,a_n] $ is equal to the numerator of the continued fraction $[a_1,a_2,\ldots,a_n]$. 
\item [\textup{(b)}]
 The number of perfect matchings of $ \calg[a_2,a_3,\ldots,a_n] $ is equal to the denominator of the continued fraction $[a_1,a_2,\ldots,a_n]$. 
 \item[\textup{(c)}] 
 If  $m(\calg)$ denotes the number of perfect matchings of $\calg$ then
 \[ \cfa =\frac{m(\calg\cfa)}{m(\calg[a_2,\ldots,a_n])}.\] 
 \end{itemize}
\end{thm}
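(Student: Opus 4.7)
The plan is to deduce all three parts of Theorem~\ref{thm1} simultaneously from the single matching recursion
\[
m(\calg[a_1,a_2,\ldots,a_n]) \;=\; a_1\, m(\calg[a_2,\ldots,a_n]) \;+\; m(\calg[a_3,\ldots,a_n]), \qquad (\star)
\]
under the convention that the empty snake graph has $m = 1$. Granting $(\star)$, the identity $[a_1,\ldots,a_n] = a_1 + 1/[a_2,\ldots,a_n]$ combined with the induction hypothesis applied to $[a_2,\ldots,a_n]$ immediately gives part (c). Parts (a) and (b) then follow because the resulting fraction is automatically in lowest terms: from $(\star)$ and the Euclidean step,
\[
\gcd\!\bigl(m(\calg[a_1,\ldots,a_n]),\, m(\calg[a_2,\ldots,a_n])\bigr) \;=\; \gcd\!\bigl(m(\calg[a_3,\ldots,a_n]),\, m(\calg[a_2,\ldots,a_n])\bigr),
\]
which equals $1$ by induction. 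The base case $n=1$ reduces to the assertion that the zigzag snake graph $\calg[a_1]$, which has $a_1-1$ tiles by construction, admits exactly $a_1$ perfect matchings; this is a short induction on $a_1$.

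To establish $(\star)$, I would classify perfect matchings of $\calg = \calg[a_1,\ldots,a_n]$ by their behavior at the interior edge $e_{\ell_1}$ separating the first block $\calb = (G_1,\ldots,G_{\ell_1})$ from the tail $(G_{\ell_1+1},\ldots,G_d) \cong \calg[a_2,\ldots,a_n]$. Let $v_1, v_2$ denote the two vertices of $e_{\ell_1}$. A simple parity argument (one of $v_1,v_2$ matched inside $\calb$ and the other inside the tail would leave an odd number of vertices to be matched in $\calb$) rules out mixed configurations, so every matching belongs to exactly one of three types: \emph{(i)} $\{v_1,v_2\}$ matched by $e_{\ell_1}$; \emph{(ii)} $\{v_1,v_2\}$ matched by two edges strictly internal to $\calb$; \emph{(iii)} $\{v_1,v_2\}$ matched by two edges strictly internal to the tail. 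Types (i) and (iii) together factor as a matching of $\calb\setminus\{v_1,v_2\}$ paired with a matching of the tail, the two types corresponding to the two admissible configurations of $\{v_1,v_2\}$ on the tail side. Inspection of the sign function together with Lemmas~\ref{lem1} and~\ref{lem2} identifies $\calb\setminus\{v_1,v_2\}$ with the zigzag $\calh_1$, whose matching count is $a_1$, so the combined contribution of (i) and (iii) is $a_1 \cdot m(\calg[a_2,\ldots,a_n])$. In type (ii), the zigzag structure of $\calh_1$ and the straight transition tile $G_{\ell_1}$ force a unique matching on $\calb$ absorbing $\{v_1,v_2\}$, and the residual matching problem on the tail vertices has matching count equal to $m(\calg[a_3,\ldots,a_n])$, contributing the remaining summand.

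The main obstacle is the type (ii) analysis, which has two components: verifying uniqueness of the forced matching on $\calb$ as it propagates inward through $\calh_1$ and across $G_{\ell_1}$, and identifying the residual tail matching count with $m(\calg[a_3,\ldots,a_n])$. The latter identification is \emph{not} a graph isomorphism when $a_2 \ge 2$, since the residual tail carries more tiles than $\calg[a_3,\ldots,a_n]$; the equality must therefore be established either in tandem with $(\star)$ by the inductive hypothesis or by an explicit bijection between the relevant matchings. A more conceptual route is to invoke the resolution or grafting operations from the snake graph calculus of \cite{CS}: cutting $\calg$ along a suitable interior edge directly expresses $m(\calg)$ as a sum of products of matching counts of smaller snake graphs, and identifying these pieces with $\calh_1$, $\calg[a_2,\ldots,a_n]$, and $\calg[a_3,\ldots,a_n]$ delivers $(\star)$ in one stroke.
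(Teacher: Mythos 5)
Your proposal is correct and follows essentially the same route as the paper: your recursion $(\star)$ is exactly the paper's equation (\ref{graftingeq}), obtained there by the same local analysis at the transition tile $G_{\ell_1}$ (your types (i)+(iii) are the matchings not containing the two boundary edges of $G_{\ell_1}$, your type (ii) those containing both), and the passage to continued fractions differs only in ordering—you prove (c) first and deduce (a),(b) from the Euclidean gcd computation, while the paper proves (a) first by checking that $\caln\cfa=a_1\caln[a_2,\ldots,a_n]+\caln[a_3,\ldots,a_n]$. The one step you flag as an obstacle—identifying the type (ii) residual tail count with $m(\calg[a_3,\ldots,a_n])$—is closed in the paper by the observation that $\calh_2$ is also a zigzag (Lemma~\ref{lem2}), so the matching is forced to consist of boundary edges throughout $\calh_2$ and what remains is a free perfect matching of the genuine subgraph $\calg[a_3,\ldots,a_n]$; this is precisely the ``explicit bijection'' you anticipate.
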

\begin{proof} (a)
If $n=1$, then $\calg[a_1]$ is a zigzag snake graph with $a_1-1$ tiles.  If $a_1=1$, this graph is a single edge and it has exactly one perfect matching. If $a_1>1$, then there is exactly one perfect matching of $\calg[a_1]$ that contains the edge $e_0$. On the other hand, if a perfect matching does not contain $e_0$, then it must contain the other edge  $b_0$ in $\calgSW[a_1]$ and it must be a perfect matching on the remaining part which is a zigzag snake graph with $a_1-2$ tiles. By induction this gives a total of $a_1$ perfect matchings. 

Now suppose that $n>1$, and let $P$ be a perfect matching  of the snake graph $\calg[a_1,a_2,\ldots,a_n]$. We will denote by $\mathcal{N}\cfq$  the numerator of the continued fraction $\cfq$.  By Lemma \ref{lem1}, the tiles $G_{\ell_1-1},G_{\ell_1}$ and $G_{\ell_1+1}$ form a straight subsnake graph of $\calg\cfq$. 
If $P$ does not contain the two boundary edges of $G_{\ell_1}$ then the restrictions of $P$ to $\calh_1\cong\calg[a_1]$ and $\calg\setminus\calh_1\cong\calg[a_2,a_3,\ldots,a_n]$ are perfect matchings of these subgraphs. By induction this gives $a_1+ \mathcal{N} [a_2,a_3,\ldots,a_n]$  perfect matchings. On the other hand, if $P$ contains the boundary edges of $G_{\ell_1}$ then, since $\calh_1$ and $\calh_2$ are zigzag snake graphs, the restriction of $P$ to $\calh_1$ and $\calh_2$ consists of boundary edges only, and there is a unique such matching on  $\calh_1$ and on $\calh_2$. Moreover, the restriction of $P$ to $\calg[a_3,a_4,\ldots,a_n]$ is a perfect matching of that subgraph. By induction, this gives $ \mathcal{N}[a_3,a_4,\ldots,a_n]$  perfect matchings. Putting these two cases together, we get a total of 
\begin{equation}\label{graftingeq}
a_1\,\mathcal{N}[a_2,a_3,\ldots,a_n] +\mathcal{N} [a_3,a_4,\ldots,a_n] \end{equation}
perfect matchings. 

For the remainder of this proof, denote by $N$ and $D$ the numerator and the denominator of the continued fraction $[a_3,\ldots,a_n]$. Thus $[a_3,\ldots,a_n]=\frac{N}{D}$ and $N,D$ are relatively prime. Then
\[ \cfa \ =\ a_1+\cfrac{1}{a_2+\cfrac{D}{N}} \ =\ a_1+\cfrac{N}{a_2N+D} \ =\ \cfrac{a_1( a_2N+D)+N}{a_2N+D}.
\]
Note that this fraction is reduced, because $N$ and $D$ are relatively prime. Thus
\begin{equation}\label{eqDN}
 \caln \cfa \ =\ a_1( a_2N+D)+N.
\end{equation}
Furthermore, from the identity
\[ [a_2,\ldots,a_n]=a_2+\frac{D}{N}=\frac{a_2N+D}{N}\] 
we see that $\caln[a_2,\ldots,a_n]=a_2N+D$, and hence (\ref{eqDN})
implies that
\[\caln\cfa=a_1\caln[a_2,\ldots,a_n]+\caln[a_3,\ldots,a_n].\]
This completes the proof of (a).

Statement (b) follows directly from (a) because the denominator of $\cfq$ is equal to the numerator of $[a_2,\ldots,a_n]$, and (c) is obtained by combining (a) and (b).
%
\end{proof}

\begin{example}\label{exfib}
 For $\cfa=[1,1,\ldots,1]$, the  snake graph $\calg=\calg[1,1,\ldots, 1] $ is the straight snake graph with $ n-1$ tiles and its   number of perfect matchings is the $ n+1$-st Fibonacci number. The first few values are given in the table below.
\begin{center} 
 \begin{tabular}{c||c|c|c|c|c|c|c|c|c|c}
 $ n$ & 1 & 2&3&4&5&6&7&8&9&10\\ \hline
$ m(\calg)$&1&2&3&5&8&13&21&34&55&89
\end{tabular}
\end{center}

\end{example}

\begin{example}\label{expell}
 For $\cfa=[2,2,\ldots,2]$, the  snake graph $\calg=\calg[2,2,\ldots, 2] $ has a staircase shape where each horizontal and each vertical segment has exactly 3 tiles, and in total there are $n-1$ segments. The number of perfect matchings  of $\calg$ is the $ n+1$-st Pell number. The first few values are given in the table below.
\begin{center} 
 \begin{tabular}{c||c|c|c|c|c|c|c|c|c|c}
 $ n$ & 1 & 2&3&4&5&6&7&8&9&10\\ \hline
$ m(\calg)$&2&5&12&29&70&169&408&985&2378&5741
\end{tabular}
\end{center}

\end{example}

\begin{remark} The relation between cluster algebras and continued fractions established in 
 Theorem \ref{thm1} has interesting implications on several levels. On the one hand, it allows us to use the machinery of continued fractions in the theory of cluster algebras to gain a better understanding of the elements of the cluster algebra and think of them in a new way. This aspect is explored in the  current paper.
 
 On the other hand, Theorem \ref{thm1} provides a new combinatorial model for continued fractions which allows us to interpret the numerators and denominators of positive continued fractions as cardinalities of  combinatorially defined sets. This interpretation is useful in the understanding of continued fractions. A simple example is that the continued fractions $\cfa$ and $[a_n,\ldots,a_2,a_1]$ have the same numerator; a fact which is well-known, but not obvious in the theory of continued fractions, but which becomes trivial using Theorem \ref{thm1}, because $\calg[a_n,\ldots,a_2,a_1]$ is obtained from $\calg\cfa$ by a rotation about 180$^\circ$.
Moreover, the snake graph calculus of \cite{CS,CS2,CS3} has produced a large number of identities of snake graphs which now can be translated to identities of (complex) continued fractions. In this way, we recover most of the classical identities involving convergents of the continued fractions, and also  obtain seemingly new identities. We give  three examples in section \ref{sect 4}, and we will  publish further results in this direction in a forthcoming paper \cite{CS5}.
\end{remark}
\medskip

\section{The set of all abstract snake graphs}\label{sect bijections}
Definition \ref{def} can be thought of as a map from the set of positive continued fractions to the set of snake graphs. On the other hand, the sign sequence (\ref{seq}) can be thought of as a map from snake graphs to positive continued fractions. Modulo a choice of an edge $e_d$ in the north-east of the snake graph, it turns out that these maps are bijections. We have the following theorem.

\begin{thm}
 \label{thm bijections} 
There is a commutative diagram

\[\xymatrix@C65pt{ 
\left\{\genfrac{}{}{0pt}{}{  \textup{\footnotesize pairs $(\calg,e_d)$ of a snake graph $\calg$ }}{\textup{\footnotesize with $d\ge1$ tiles, and $e_d\in\calgNE$ }}\right\}\ar@{>>}[dd]_{\textup{\footnotesize forget $e_d$}}  \ar@/^5pt/[r]^{\displaystyle\FF} _\cong&\left\{\genfrac{}{}{0pt}{}{\textup{\footnotesize positive continued fractions different}}{\textup{\footnotesize from  the continued fraction $[1] $ }}\right\}\ar@/^5pt/[l]^{\displaystyle\GG}
\ar@{>>}[dd]^{\displaystyle g} \\
 \\ 
\left\{{\genfrac{}{}{0pt}{}{\textup{\footnotesize snake graphs $\calg$ with}}{\textup{\footnotesize at least one tile}}}\right\}\ar[ddr]_{\displaystyle \chi}^
\cong  \ar[r]^{\displaystyle \FF'} &\left\{\genfrac{}{}{0pt}{}{\textup{\footnotesize positive continued fractions}}{\textup{\footnotesize with last coefficient $>1$}}\right\}
\ar[dd]^{\cong}_{\displaystyle Ev} \\
 \\
 & \mathbb{Q}_{> 1}
 }
 \]
 where  the maps are defined as follows.
 \begin{itemize}
\item [-]
$\FF$ maps the pair $(\calg,e_d)$ to the continued fraction   defined by the sign sequence 
 \[(f(e_0),f(e_1),\ldots,f(e_{d-1}),f(e_d)),\] 
  \item [-]$\FF'$ maps the snake graph  $\calg$ to the continued fraction defined by  the following sign sequence  taking the sign of the edge $e_{d-1}$ twice
  \[(f(e_0),f(e_1),\ldots,f(e_{d-1}),f(e_{d-1}));\]
  \item[-]$\GG$ sends the continued fraction $\cfa$ to the pair consisting of the snake graph $\calg\cfa$ and an edge $ e_d$ determined by the sign sequence (\ref{eqsign}),
\item[-] $g$ is  defined by 
 \[g ( [a_1,a_2,\ldots, a_{n-1},a_n]) =\left\{\begin{array}{ll} \,[a_1,a_2,\ldots, a_{n-1}+1] &\textup{if $a_n=1$;}\\ 
\,[a_1,a_2,\ldots, a_{n-1},a_n] &\textup{if $a_n>1$,}\end{array}\right. \]
\item[-] $\chi$ maps the snake graph $\calg$ to the quotient $\frac{m(\calg) } {m(\calg\setminus \calh_1)}$
and 
\item[-] $Ev$ is the bijection from Proposition \ref{thm162} sending the continued fraction to its value.

\end{itemize}
Moreover
the maps $\FF,\GG,\FF',\chi$ and $\textup{Ev}$ are  bijections.
\end{thm}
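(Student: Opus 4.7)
The plan is to reduce the theorem to the mutually inverse bijections $\FF \leftrightarrow \GG$, after which the other claims follow by chasing the diagram. First I would verify that $\FF$ is well defined: a snake graph admits exactly two sign functions, related by a global sign swap, which yield the same partition into maximal monochromatic blocks and hence the same continued fraction $[a_1,\ldots,a_n]$. The map $\GG$ is well defined by Definition~\ref{def}, with the last sign $f(e_d)$ in the sign sequence (\ref{eqsign}) specifying whether $e_d$ is the north or the east edge of $G_d$. The identities $\FF \circ \GG = \mathrm{id}$ and $\GG \circ \FF = \mathrm{id}$ are then tautological, since block-length encoding and its decoding via (\ref{eqsign}) are mutually inverse. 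The excluded case $[1]$ corresponds on the snake-graph side to the degenerate $d=0$ case (a single edge), matching the restriction $d\ge 1$.

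Commutativity of the upper square is a direct inspection of the last entry of the sign sequence. If $f(e_d)=f(e_{d-1})$, the sequences used by $\FF$ and $\FF'$ are identical, the resulting continued fraction has $a_n\ge 2$, and $g$ acts as the identity. If $f(e_d)\ne f(e_{d-1})$, then $\FF(\calg,e_d)=[a_1,\ldots,a_{n-1},1]$ while $\FF'(\calg)$ lengthens the penultimate block by one and produces $[a_1,\ldots,a_{n-2},a_{n-1}+1]$, which is precisely $g([a_1,\ldots,a_{n-1},1])$. Bijectivity of $\FF'$ then follows: its image consists of continued fractions with $a_n>1$, and the unique preimage of $[a_1,\ldots,a_n]$ under $\FF'$ is $\calg[a_1,\ldots,a_n]$. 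Finally $\mathrm{Ev}$ is a bijection by Proposition~\ref{thm162}(a).

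The remaining ingredient is commutativity of the lower triangle $\chi=\mathrm{Ev}\circ \FF'$. Writing $\calg=\calg[a_1,\ldots,a_n]$ with $a_n>1$, the key geometric observation to verify is that $\calg\setminus \calh_1\cong \calg[a_2,\ldots,a_n]$. When $a_1=1$, the subgraph $\calh_1$ is the south edge $e_0$ itself, and deleting its two vertices strips $G_1$ away, leaving the snake graph on $G_2,\ldots,G_d$ whose sign sequence starting from the new south edge $e_1$ matches that of $\calg[a_2,\ldots,a_n]$ up to global sign swap. When $a_1\ge 2$, Lemma~\ref{lem1} shows that $(G_{a_1-1},G_{a_1},G_{a_1+1})$ is straight, so $\calh_1$ meets $G_{a_1}$ exactly in the two vertices of $e_{a_1-1}$; deleting them removes $G_{a_1}$ as a tile and leaves the snake graph on $G_{a_1+1},\ldots,G_d$, again with matching sign sequence. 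Theorem~\ref{thm1}(c) then gives
\[
\chi(\calg)\;=\;\frac{m(\calg)}{m(\calg\setminus \calh_1)}\;=\;\frac{m(\calg[a_1,\ldots,a_n])}{m(\calg[a_2,\ldots,a_n])}\;=\;[a_1,\ldots,a_n]\;=\;\mathrm{Ev}\circ \FF'(\calg),
\]
and $\chi$ is therefore a bijection as a composition of bijections.

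The main obstacle I expect is the identification $\calg\setminus \calh_1\cong \calg[a_2,\ldots,a_n]$. The convention for $\setminus$ (delete vertices and take the induced subgraph) interacts nontrivially with the snake-graph structure at the boundary of the deleted piece, and one has to check carefully in both the degenerate case $a_1=1$ (where $\calh_1$ collapses to a single edge) and the straight-piece case $a_1\ge 2$ that the induced subgraph carries no spurious dangling edges and is genuinely the snake graph $\calg[a_2,\ldots,a_n]$ rather than only having the correct number of perfect matchings.
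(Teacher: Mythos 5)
Your proposal is correct and follows essentially the same route as the paper: establish that $\FF$ and $\GG$ are mutually inverse, check commutativity of the upper square by the two cases $f(e_d)=\pm f(e_{d-1})$, reduce the lower triangle to Theorem~\ref{thm1}, and deduce bijectivity of $\chi$ from the rest of the diagram. The only substantive differences are cosmetic: you spell out the identification $\calg\setminus\calh_1\cong\calg[a_2,\ldots,a_n]$ more carefully than the paper does (which is a reasonable precaution, since it is needed to apply Theorem~\ref{thm1}(c) to the map $\chi$), while your injectivity claim for $\FF'$ is stated rather than argued --- it does follow from the commuting square together with the fact that both \textup{forget} and $g$ are two-to-one, or by the paper's direct sign-sequence comparison.
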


\begin{remark}
 We need to exclude the continued fraction $[1]$ in the statement of the theorem, because the map $g$ is not defined on it. Of course, it corresponds to the unique snake graph with zero tiles, the single edge snake graph.
\end{remark}
\begin{proof} Note first that all maps are well-defined, since the continued fraction $[1]$ is excluded.

 To show that the diagram commutes,
 suppose first that $f(e_d)=f(e_{d-1})$. Then $\FF(\calg,e_d)=\FF'(\calg)$. Moreover, if $\cfa=\FF(\calg,e_d)$ then $a_n>1$  and thus $g(\FF(\calg,e_d)) = \FF'(\calg)$.
Now suppose that $f(e_d)=-f(e_{d-1})$, and let $\cfa=\FF(\calg,e_d)$. Then $a_n=1$ and thus $g(\calf(\calg,e_d)) = [a_1,a_2,\ldots, a_{n-1}+1]$,
and this is also equal to $\FF'(\calg)$. 
The lower triangle commutes by Theorem \ref{thm1}.

Next, we show that $\FF$ and $\GG$ are bijections. Let $\calg$ be a snake graph with $d$ tiles, $e_d\in \calgNE$ and $f$ a sign function on $\calg$. Then $\FF(\calg,e_d)$ is the continued fraction $\cfa$ defined by the sign sequence 
\[(f(e_0),f(e_1),\ldots,f(e_{d-1}),f(e_d)),\] 
where $e_1,\ldots,e_{d-1}$ are the interior edges of $\calg$ and $e_0$ is the south edge of $\calg$. The map $\GG$ sends this continued fraction to the pair $(\calg\cfa,e'_d)$, where $\calg\cfa=\calg$ by construction, and $e'_d\in\calg\cfa^{N\!E}$ is the unique edge such that $f(e'_d)=f(e_d)$; thus $e'_d=e_d$. This shows that $\GG\circ\FF$ is the identity. Conversely, let $\cfa$ be a positive continued fraction. Then $\GG(\cfa)=(\calg\cfa,e_d)$, where $e_d$  is determined by the sign sequence (\ref{eqsign}). Applying $\FF$ to this pair yields the continued fraction determined by the same sign sequence. Thus $\FF\circ\GG$ is the identity.

To show that $\FF'$ is a bijection, let $\cfa$ be a positive continued fraction with $a_n>1$. Applying the map $\GG$ yields a pair $(\calg \cfa,e_d)$, where $e_d\in\calg\cfa^{N\!E}$ is the unique edge whose sign is equal to the sign of the last interior edge $e_{d-1}$, because $a_n>1$. Thus $f(e_d)=f(e_{d-1})=\sgn(a_n)$. Forgetting the edge $e_d$ and applying the map $\FF'$ will produce the continued fraction given by the sign sequence
\[(f(e_0),f(e_1),\ldots,f(e_{d-1}),f(e_{d-1})),\] 
and since $f(e_{d-1})=f(e_d)$, this continued fraction is $\cfa$. This shows that $\FF'$ is surjective. 
Now assume that there are two snake graphs $\calg,\calg'$, with respective sign functions $f,f'$ such that $\FF'(\calg)=\FF'(\calg')=\cfa$. Then the  sign sequences of $\calg$ and $\calg'$ have the same length $d+1=\sum a_j$, and either $f(e_i)=f'(e_i')$ for each $i=1,2,\ldots,d$, or $f(e_i)=-f'(e_i')$ for each $i=1,2,\ldots,d$. Since $e_0$, respectively $e_0'$, is the south edge of $\calg$, respectively $\calg'$, this implies that $\calg=\calg'$. Thus $\FF'$ is a bijection.

The map $Ev$ is a bijection by Proposition \ref{thm162}, and therefore the map $\chi$ is a bijection by commutativity of the diagram.
\end{proof}
\begin{figure}
\begin{center}
 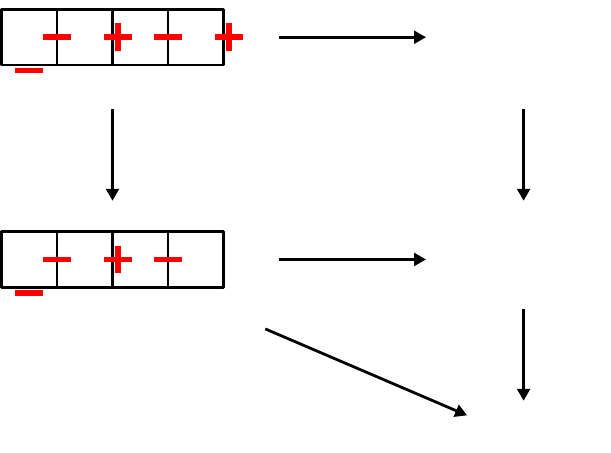
\caption{An example of the action of the maps in Theorem \ref{thm bijections}.} \label{fig bijections}
\end{center}
\end{figure}

\begin{cor} For every integer $N\ge 1$,
 the number of snake graphs $\calg$ with exactly $N$ perfect matchings is equal to   the Euler totient function  $\phi(N)$. 
\end{cor}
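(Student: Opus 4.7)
The plan is to extract this directly from the commutative diagram in Theorem \ref{thm bijections}, using the bottom bijection $\chi$ between snake graphs with at least one tile and $\mathbb{Q}_{>1}$, combined with a separate count in the degenerate case $N=1$.

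First I would split into two cases according to whether the snake graph has any tiles. Any snake graph with at least one tile contains a single tile as a subgraph and hence admits at least two perfect matchings (the two matchings of an individual square cannot be extended to a matching with fewer than two matchings overall — more precisely, $m(\calg) \ge 2$ whenever $\calg$ has a tile, which one can verify inductively, or simply by noting that the value $\chi(\calg) \in \mathbb{Q}_{>1}$ has numerator at least $2$). The single-edge snake graph (the unique snake graph with zero tiles) has exactly one perfect matching, namely the edge itself. Since $\phi(1)=1$, this handles $N=1$.

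For $N \ge 2$, the plan is to use that the composition $Ev \circ \chi$ is a bijection from snake graphs with at least one tile to $\mathbb{Q}_{>1}$, and that under $\chi$ the value $m(\calg)/m(\calg\setminus\calh_1)$ is already in reduced form (this is the key point, supplied by Theorem \ref{thm1}(c) together with Proposition \ref{thm162}). Therefore $m(\calg) = N$ if and only if $\chi(\calg) = N/q$ for some integer $q$ with $1 \le q < N$ and $\gcd(q,N) = 1$. The number of such $q$ is by definition $\phi(N)$ (for $N \ge 2$, $\gcd(N,N) = N \ne 1$, so the condition $q \le N-1$ is automatic from $q$ being coprime to $N$ and in $\{1,\ldots,N\}$).

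The only potential subtlety is confirming that the reducedness clause in Theorem \ref{thm1}(c) really gives a bijection between snake graphs with $N$ perfect matchings and reduced fractions $N/q$ with $q \in \{1,\ldots,N-1\}$ and $\gcd(q,N)=1$; this is immediate from the commutativity of the diagram, since $\chi(\calg) = N/q$ forces both $m(\calg) = N$ and $m(\calg\setminus\calh_1) = q$, with no ambiguity. Combining the two cases yields the count $\phi(N)$ for every $N \ge 1$.
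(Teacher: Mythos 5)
Your argument is correct and is essentially the paper's own proof: the case $N=1$ is handled by the single-edge snake graph, and for $N\ge 2$ the bijection $\chi$ (whose values are reduced fractions by Theorem~\ref{thm1}) restricts to a bijection between snake graphs with $N$ perfect matchings and the fractions $N/q$ with $1\le q\le N-1$ and $\gcd(N,q)=1$, a set of cardinality $\phi(N)$. Your added remarks on why $m(\calg)\ge 2$ for a snake graph with a tile and on the reducedness of $\chi(\calg)$ just make explicit what the paper leaves implicit.
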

\begin{proof} The snake graph consisting of a single edge has $1=\phi(1)$ perfect matching. For $N\ge 2$, 
the map $\chi$ of the theorem induces  a bijection \[\{\calg \mid m(\calg)=N\}\to \{N/q \mid q\in\{1,2,\ldots, N-1 \}, \gcd(N,q)=1\},\] 
and the set on the right hand side has cardinality $\phi(N)$. 
\end{proof}

\begin{example}
 The 10 snake graphs with 11 perfect matchings are shown in Figure~\ref{fig 11} together with their rational numbers. 
 The corresponding continued fractions are the following. 
 
\[\begin{array}{llllllllllcccccc}
 \frac{11}{1}=[11] & \frac{11}{2}=[5,2] & \frac{11}{3}=[3,1,2]  
 & \frac{11}{4} =[2,1,3]
 & \frac{11}{5}=[2,5]\\ \\
  \frac{11}{6}=[1,1,5]
 & \frac{11}{7}=[1,1,1,3]
 & \frac{11}{8}=[1,2,1,2]
 & \frac{11}{9}=[1,4,2]
 & \frac{11}{10}=[1,10]
 \end{array}\]
\begin{figure}
\begin{center}
 \Large\scalebox{0.6}{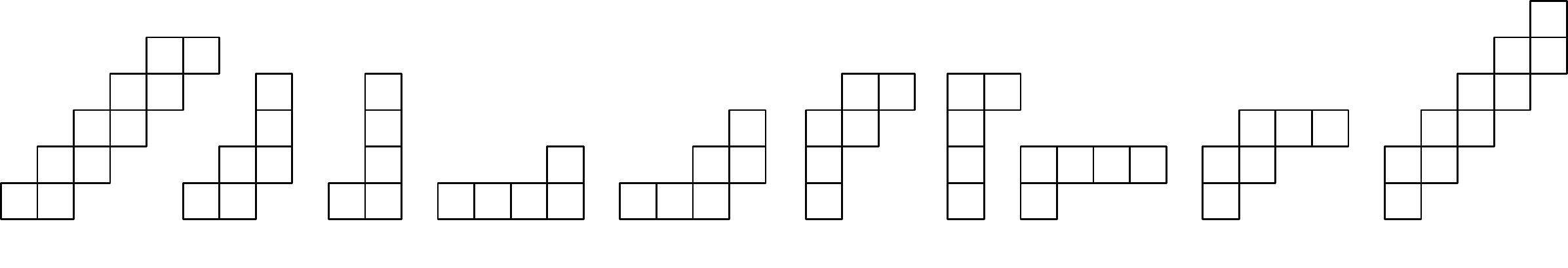}
\end{center}
\caption{The 10 snake graphs with 11 perfect matchings together with their rational numbers. }\label{fig 11}
\end{figure}
\end{example}

We also have the following analogue of Theorem \ref{thm bijections} for infinite snake graphs.
\begin{thm}\label{thm bijectionsinf}
 There is a commutative diagram
\[\xymatrix@C80pt{ 
\left\{\textup{infinite snake graphs}\right\}\ar[ddr]_{\displaystyle \chi}^
\cong  \ar[r]^{\displaystyle \FF} _\cong&\left\{\textup{infinite positive continued fractions}\right\}
\ar[dd]^{\cong}_{\displaystyle Ev} \\
 \\
 & \mathbb{R}_{> 1}\setminus\mathbb{Q}_{> 1}
 }
 \]
 where  the maps are defined as follows.
 \begin{itemize}
\item [-]
$\FF$ maps the  snake graph  $\calg$ to the continued fraction defined by  the infinite sign sequence  
  \[(f(e_0),f(e_1),f(e_2)\ldots);\]
\item[-] $\chi$ maps the snake graph $\calg$ to the limit \[\lim_{n\to \infty}\frac{m(\calg\cfa) } {m(\calg\cfa\setminus \calh_1)};\]
\item[-] $Ev$ is the bijection from Proposition \ref{thm162R} sending the continued fraction to its value.

\end{itemize}
Moreover
the maps $\FF,\chi$ and $\textup{Ev}$ are  bijections.

\end{thm}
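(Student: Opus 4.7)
The plan is to reduce to the finite case of Theorem \ref{thm bijections} together with the classical convergence result Proposition \ref{thm162R}. First I would establish that $\FF$ is a well-defined bijection. Given an infinite snake graph $\calg$, its sign sequence $(f(e_0), f(e_1), f(e_2), \ldots)$ is an infinite $\pm$-word whose maximal constant-sign blocks have lengths $a_1, a_2, \ldots$; for this to yield an infinite positive continued fraction, the sign sequence must change sign infinitely often, which rules out snake graphs that become eventually zigzag (consistent with the fact that the target $\mathbb{R}_{>1} \setminus \mathbb{Q}_{>1}$ contains only irrationals). Conversely, given an infinite positive continued fraction $[a_1, a_2, \ldots]$, the finite snake graphs $\calg[a_1, \ldots, a_n]$ form an increasing chain, each being the initial segment of the next by Definition \ref{def}, and their direct limit is the unique infinite snake graph whose sign sequence has the prescribed block lengths. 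This gives a two-sided inverse to $\FF$.

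Next I would prove commutativity of the diagram, which is the main content. For the infinite snake graph $\calg$ with associated sequence $[a_1, a_2, \ldots]$, each truncation $\calg[a_1, \ldots, a_n]$ is an initial segment of $\calg$ whose first zigzag block is $\calh_1$. By Theorem \ref{thm1}(c),
\[
\frac{m(\calg[a_1,\ldots,a_n])}{m(\calg[a_1,\ldots,a_n]\setminus\calh_1)} \;=\; [a_1, a_2, \ldots, a_n],
\]
so the sequence whose limit defines $\chi(\calg)$ is precisely the sequence of convergents of $[a_1, a_2, \ldots]$. By Proposition \ref{thm162R}, these convergents converge to a unique value in $\mathbb{R}_{>1} \setminus \mathbb{Q}_{>1}$, namely $Ev(\FF(\calg))$. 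Hence the limit exists and $\chi = Ev \circ \FF$.

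Bijectivity of $\chi$ then follows formally from bijectivity of $\FF$ and of $Ev$, together with the commutativity just proved. I expect the main obstacle to be the careful treatment of well-definedness of $\FF$: one must show that every infinite snake graph appearing in the bijection has infinitely many sign changes in its sign sequence (so that all $a_i$ are finite positive integers), and, conversely, that the direct-limit construction really produces a bona fide infinite snake graph in the sense of section \ref{sect 1.2} with the correct sign function. Once these set-theoretic issues are pinned down, the remainder of the proof is formal and reduces entirely to Theorem \ref{thm1} and the classical convergence theorem for infinite positive continued fractions.
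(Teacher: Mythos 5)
The paper states Theorem~\ref{thm bijectionsinf} without supplying a proof, evidently regarding it as the routine infinite analogue of Theorem~\ref{thm bijections}; your argument is exactly the one the authors must have in mind, and it is correct. You reduce everything to the finite case: the truncations $\calg[a_1,\ldots,a_n]$ form an increasing chain of initial segments (which is right, since the sign sequences are nested and a sign sequence determines the snake graph), Theorem~\ref{thm1}(c) identifies the quotient $m(\calg[a_1,\ldots,a_n])/m(\calg[a_1,\ldots,a_n]\setminus\calh_1)$ with the $n$-th convergent $[a_1,\ldots,a_n]$, and Proposition~\ref{thm162R} gives convergence of these convergents to an irrational in $\mathbb{R}_{>1}$, whence $\chi=Ev\circ\FF$ and bijectivity of $\chi$ follows formally. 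One point you raise deserves emphasis: an infinite snake graph that is eventually zigzag has an eventually constant sign sequence, so $\FF$ does not produce a bona fide infinite positive continued fraction on such graphs (morally the ``last coefficient'' would be infinite). This is an imprecision in the theorem as stated rather than in your proof --- the domain should be understood as infinite snake graphs with infinitely many straight triples of consecutive tiles, equivalently infinitely many sign changes --- and you are right to flag it and to restrict to that subset; with that convention your inverse construction via the direct limit of the $\calg[a_1,\ldots,a_n]$ does land in the correct set and gives a two-sided inverse to $\FF$. No gaps beyond that.
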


%
%
\section{Three formulas for continued fractions}\label{sect 4}
In this section, we give three examples of formulas that follow directly from our  earlier work on snake graph calculus. We restrict ourselves here to the  three cases that are easiest to state in terms of continued fractions. However, the results in \cite{CS,CS2} yield several other formulas, which  
 will be given in a forthcoming paper.
  
In what follows, the identities of snake graphs are identities in the snake ring introduced in \cite{CS3}. Equivalently, one can think of these identities as canonical bijections between the sets of perfect matchings of the snake graphs involved, where multiplication is to be replaced by disjoint union of graphs and addition by union of sets of perfect matchings. For example an identity of the form $\calg_1\calg_2=\calg_3\calg_4+\calg_5\calg_6$ means that there is a bijection between $\match (\calg_1\sqcup\calg_2) $ and $\match(\calg_3\sqcup\calg_4)\cup\match(\calg_5\sqcup\calg_6)$, where $\match(\calg)$ denotes the set of perfect matchings of $\calg$.
\smallskip

We keep the notation of section \ref{sect 1}.
Let $b_{i}$  be the single edge corresponding to the tile $G_{\ell_i}$ in $\calg=\calg\cfa$ and let $b_0$ be the unique edge in $\calgSW\setminus\{e_0\}$ and $ b_n$ the unique edge in $\calgNE\setminus\{e_d\}$.
\begin{thm}
 \label{grafting}
 We have the following identities of snake graphs, where we set $\calg[a_1,\ldots,a_0]= b_0$ and $\calg[a_{n+1},\ldots,a_n]=b_n$.
 \begin{itemize}
\item[\textup{(a)}] For every $i=1,2,\ldots, n$,
\[b_{i} \,\calg[a_1,\ldots,a_n]\  =\  \calg[a_1,\ldots,a_i] \,\calg[a_{i+1},\ldots,a_n] + \calg[a_1,\ldots,a_{i-1}]\, \calg[a_{i+2},\ldots,a_n].\]
\\
 \item[\textup{(b)}] 
 For every $j\ge 0$ and $i$ such that $1\le i+j\le n-1$,
 \[\begin{array}{rcl} \calg[a_1,\ldots,a_{i+j}]  \calg[a_i,\ldots,a_n] &=& \calg[a_1,\ldots,a_n]\, \calg[a_{i},\ldots,a_{i+j}] \\  
 \\
 &+& (-1)^j \,\calg[a_1,\ldots,a_{i-2}] \,\calg[a_{i+j+2},\ldots,a_n].\end{array} \]
\\
 \item[\textup{(c)}] For snake graphs $\calg[a_1,\ldots,a_n]$ and $\calg[b_1,\ldots,b_m]$ such that $[a_i,\ldots,a_{i+k}]=[b_j,\ldots,b_{j+k}]$ for $k\ge0,1<i<n, 1<j<m$ and $a_{i\!-\!1} < b_{j\!-\!1}, a_{i\!+\!k\!+\!1}\neq b_{j\!+\!k\!+\!1},$ we have 
\[
\begin{tikzpicture}
\node at (0,1){$\calg[a_1,\ldots,a_{n}] \, \calg[b_1,\ldots,b_m] = \calg[a_1,\ldots,a_{i\!-\!1},b_{j}, \ldots,b_m] \, \calg[b_1,\ldots,b_{j\!-\!1},a_i,\ldots,a_{n}] $};
\node at (2.5,0){$+(-1)^k \,\calg[a_1,\ldots,a_{i-2}\!-\!1,1,b_{j\!-\!1}-a_{i\!-\!1}\!-\!1,b_{j\!-\!2},\ldots,b_1] \,\calg'$};
\end{tikzpicture} 
\]
where 
\[\calg'=\left\{
\begin{array}{ll}
\calg[b_m,\ldots,b_{j\!+\!k\!+\!2}\!-\!1,1,a_{i\!+\!k\!+\!1}-b_{j\!+\!k\!+\!1}\!-\!1,a_{i\!+\!k\!+\!2},\ldots,a_n] & \textup{if } a_{i\!+\!k\!+1}>b_{j\!+\!k\!+1} \\
\\
\calg[b_m,\ldots,b_{j\!+\!k\!+\!2},b_{j\!+\!k\!+\!1}\!-a_{i\!+\!k\!+\!1}\!-\!1,1,a_{i\!+\!k\!+\!2}\!-\!1,,\ldots,a_n] & \textup{if } a_{i\!+\!k\!+1}<b_{j\!+\!k\!+1}
\end{array} 
\right.
\]
\end{itemize}
\end{thm}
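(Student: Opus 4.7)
The plan is to reduce each of the three formulas to a known identity in the snake graph ring of \cite{CS,CS2}, and then translate via the snake graph/continued fraction dictionary set up in Sections \ref{sect 1}--\ref{sect bijections}. Each identity is a statement about canonical bijections between sets of perfect matchings, so once the correct snake graph decomposition is identified, the equation follows by applying Theorem \ref{thm1}. Throughout I would use Lemmas \ref{lem1} and \ref{lem2}, which describe precisely where $\calg\cfa$ is straight and where it is zigzag.

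For part (a), the decisive structural input is Lemma \ref{lem1}: the three consecutive tiles $(G_{\ell_i-1},G_{\ell_i},G_{\ell_i+1})$ form a straight subsnake graph, with $G_{\ell_i}$ as the middle tile. This is exactly the hypothesis of the basic grafting identity of \cite{CS} applied at a straight piece. I would partition $\match(\calg\cfa)$ according to whether the two boundary edges of $G_{\ell_i}$ both belong to the matching. If they do not, then the matching restricts to a perfect matching of $\calg[a_1,\ldots,a_i]$ together with one of $\calg[a_{i+1},\ldots,a_n]$, giving the first term on the right. If they do, then the zigzag pieces $\calh_i,\calh_{i+1}$ on either side of $G_{\ell_i}$ are matched along their boundary, which forces the matching to come from $\calg[a_1,\ldots,a_{i-1}]$ and $\calg[a_{i+2},\ldots,a_n]$ together with the free edge $b_i$, giving the second term.

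Part (b) I would deduce either by iterating (a) or, more cleanly, by recognising the formula as an instance of the resolution identity of \cite{CS} for two snake graphs that share a common zigzag overlap. The overlap in this case is $\calg[a_i,\ldots,a_{i+j}]$, appearing as the right end of $\calg[a_1,\ldots,a_{i+j}]$ and the left end of $\calg[a_i,\ldots,a_n]$. Resolving the overlap exchanges the ends and yields the first term on the right; the correction term is the familiar ``error term'' of resolution, where the sign $(-1)^j$ records the parity of the overlap length $j$. Checking that the correction term is precisely $\calg[a_1,\ldots,a_{i-2}]\calg[a_{i+j+2},\ldots,a_n]$ is a direct calculation using the sign sequence (\ref{eqsign}) at the boundaries of the overlap.

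Part (c) will be the main obstacle. It is the ``grafting with shifted overlap'' identity of \cite{CS2}, applied to the two snake graphs $\calg[a_1,\ldots,a_n]$ and $\calg[b_1,\ldots,b_m]$ whose common subword $[a_i,\ldots,a_{i+k}]=[b_j,\ldots,b_{j+k}]$ determines a pair of shared zigzag subgraphs. The strategy is to identify the unique pair of tiles where the two snake graphs can be crossed and resolved, and to translate the resolution back into continued fraction data via Theorem \ref{thm bijections}. The two terms on the right correspond to the two ways of pairing the ``tails'' of the graphs on either side of the common subgraph. The difficulty lies in the bookkeeping at the boundaries: the asymmetric conditions $a_{i-1}<b_{j-1}$ and $a_{i+k+1}\ne b_{j+k+1}$ force an adjustment of the sign sequence at the cut, and this is what produces the coefficients $b_{j-1}-a_{i-1}-1$, the two inserted $1$'s, and the case split according to whether $a_{i+k+1}>b_{j+k+1}$ or $a_{i+k+1}<b_{j+k+1}$. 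I would verify this by translating each of the four snake graphs on both sides into their sign sequences, reading off the continued fraction via $\FF'$ of Theorem \ref{thm bijections}, and comparing. The sign $(-1)^k$ has the same origin as in part (b): it records the parity of the overlap length $k+1$.
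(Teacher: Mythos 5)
Your overall strategy coincides with the paper's: its proof of this theorem is essentially a citation, identifying (a) with grafting on the single edge $b_i$ (\cite{CS2}, section 3.3, case 3), and (b), (c) with the resolution of two snake graphs along a crossing overlap (\cite{CS}, section 2.4, and \cite{CS2}, section 3.1). Your matching-level sketch of (a) is the same partition argument that the paper carries out in detail in the proof of Theorem \ref{thm1}(a), so that part is sound.

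There are, however, two imprecisions, one of which is a genuine gap. In (b), the overlap $\calg[a_i,\ldots,a_{i+j}]$ is a general snake graph, not a zigzag one (it is zigzag only when $j=0$); and you elide the parity subtlety the paper records: for $j$ even the overlap is crossing in the pair $\calg[a_1,\ldots,a_{i+j}]\,\calg[a_i,\ldots,a_n]$, while for $j$ odd it is crossing in $\calg[a_1,\ldots,a_n]\,\calg[a_i,\ldots,a_{i+j}]$; this is what the sign $(-1)^j$ encodes. More seriously, in (c) the crossing overlap is \emph{not} the common subword $\calg[a_i,\ldots,a_{i+k}]$: the resolution of \cite{CS2} is performed on the maximal common subgraph, which here is $\calg[a_{i-1},a_i,\ldots,a_{i+k},d]$ with $d=\min\{a_{i+k+1},b_{j+k+1}\}$, extending into the neighbouring blocks precisely because $a_{i-1}<b_{j-1}$ and $a_{i+k+1}\neq b_{j+k+1}$. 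If you resolve along the smaller subgraph you will not arrive at the stated correction terms; the coefficients $b_{j-1}-a_{i-1}-1$, the two inserted $1$'s, and the case split according to whether $a_{i+k+1}>b_{j+k+1}$ or $a_{i+k+1}<b_{j+k+1}$ all come from reading off where this maximal overlap terminates inside each of the two sign sequences. With that correction, your plan of translating the resolved snake graphs back into continued fractions via their sign sequences is exactly what is needed.
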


\begin{proof}
 Part (a) follows from a construction called grafting with the single edge $b_i$, see \cite[section 3.3, case 3]{CS2}. Part (b) is the resolution of two snake graphs with a crossing overlap $\calg[a_i,\ldots,a_{i+j}]$ \cite[section 2.4]{CS}. If $j $ is even, then the overlap is crossing in the pair on the left $\calg[a_1,\ldots,a_{i+j}]  \calg[a_i,\ldots,a_n]$, and if $j$ is odd, then the overlap is crossing in the pair on the right $ \calg[a_1,\ldots,a_n] \calg[a_{i},\ldots,a_{i+j}]$.  Part (c) is the resolution of two snake graphs with a crossing overlap $\calg[a_{i\!-\!1},a_i,\ldots,a_{i+k},d]=\calg[a_{i\!-\!1},b_j,\ldots,b_{j+k},d]$ where $d= \textup{min } \{ a_{i\!+\!k\!+\!1}, b_{j\!+\!k\!+\!1} \}$, see \cite[ section 3.1]{CS2}.
\end{proof}

Translating  into continued fractions, we get the following. 
\begin{thm}
 \label{graftingcf}
 We have the following identities of numerators of continued fractions, where we set $\caln[a_1,\ldots,a_0]=1,$ and $\caln[a_{n+1},\ldots,a_n]=1$.

 \begin{itemize}
\item[\textup{(a)}] For every $i=1,2,\ldots, n$, \[\caln[a_1,\ldots,a_n] = \caln[a_1,\ldots,a_i] \, \caln[a_{i+1},\ldots,a_n] + \caln[a_1,\ldots,a_{i-1}]\, \caln[a_{i+2},\ldots,a_n]. \]
 \item[\textup{(b)}] 
 For every $j\ge 0$ and $i$ such that $1\le i+j\le n-1$,
 \[\begin{array}{rcl}  \caln[a_1,\ldots,a_{i+j}]\,  \caln[a_i,\ldots,a_n]&=& \caln[a_1,\ldots,a_n]\, \caln[a_{i},\ldots,a_{i+j}] \\
 \\
  &+&(-1)^j  \,\caln[a_1,\ldots,a_{i-2}] \,\caln[a_{i+j+2},\ldots,a_n]. \end{array}\]
  \\
 \item[\textup{(c)}] For continued fractions $[a_1,\ldots,a_n]$ and $[b_1,\ldots,b_m]$ such that $[a_i,\ldots,a_{i+k}]=[b_j,\ldots,b_{j+k}]$ for ${0\le k \le \textup{min}(n-i-2, m-j-2),  \ 2<i<n, \ 2<j<m}$ and $a_{i\!-\!1} < b_{j\!-\!1}, a_{i\!+\!k\!+\!1}\neq b_{j\!+\!k\!+\!1},$ we have 
\[
\begin{tikzpicture}
\node at (0,1){$\caln[a_1,\ldots,a_{n}] \, \caln[b_1,\ldots,b_m] = \caln[a_1,\ldots,a_{i\!-\!1},b_{j}, \ldots,b_m] \, \caln[b_1,\ldots,b_{j\!-\!1},a_i,\ldots,a_{n}] $};
\node at (2.5,0){$+(-1)^k \,\caln[a_1,\ldots,{a_{i-2}} \!-\!1,1,b_{j\!-\!1}-a_{i\!-\!1}\!-\!1,{b_{j\!-\!2}},\ldots,b_1] \,\caln'$};
\end{tikzpicture} \]
where 
\[\caln'=\left\{
\begin{array}{ll}
\caln[b_m,\ldots,b_{j\!+\!k\!+\!2}\!-\!1,1,a_{i\!+\!k\!+\!1}-b_{j\!+\!k\!+\!1}\!-\!1,a_{i\!+\!k\!+\!2},\ldots,a_n] & \textup{if } a_{i\!+\!k\!+1}>b_{j\!+\!k\!+1}; \\
\\
\caln[b_m,\ldots,b_{j\!+\!k\!+\!2},b_{j\!+\!k\!+\!1}\!-a_{i\!+\!k\!+\!1}\!-\!1,1,a_{i\!+\!k\!+\!2}\!-\!1,,\ldots,a_n] & \textup{if } a_{i\!+\!k\!+1}<b_{j\!+\!k\!+1}.
\end{array} 
\right.
\]
\end{itemize}\end{thm}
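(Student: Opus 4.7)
The plan is to derive Theorem \ref{graftingcf} as the image of Theorem \ref{grafting} under the ``counting homomorphism'' $\calg \mapsto m(\calg)$. The crucial bridge is Theorem \ref{thm1}(a), which identifies $m(\calg[a_1,\ldots,a_n])$ with the numerator $\caln[a_1,\ldots,a_n]$, together with the boundary conventions $\calg[a_1,\ldots,a_0]=b_0$ and $\calg[a_{n+1},\ldots,a_n]=b_n$: both are single edges with exactly one perfect matching, which matches the convention $\caln=1$ set in the statement.

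First I would invoke the fact, established in \cite{CS3}, that in the snake ring disjoint union of snake graphs corresponds to multiplication and $m(\cdot)$ extends to a ring homomorphism into $\mathbb{Z}$. Consequently any identity in the snake ring of the form $\calg_1\calg_2=\calg_3\calg_4\pm\calg_5\calg_6$ passes to $m(\calg_1)m(\calg_2)=m(\calg_3)m(\calg_4)\pm m(\calg_5)m(\calg_6)$. For part (a), I would apply this directly to Theorem \ref{grafting}(a) and cancel the factor $m(b_i)=1$ on the left, arriving at the stated identity for the numerators.

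For parts (b) and (c), the translation is identical in spirit, but the signs $(-1)^j$ and $(-1)^k$ need to be tracked. I would follow the sign convention already exploited in the proof of Theorem \ref{grafting}(b): the resolution of a crossing overlap from \cite{CS,CS2} places the ``error term'' on the side of the equation on which the overlap is genuinely crossing, and this side flips with the parity of the overlap length. Thus when $j$ is odd the error term literally sits on the left of the snake ring identity; moving it to the right and applying $m(\cdot)$ produces the factor $(-1)^j$ in the numerical identity. Part (c) is handled analogously with $k$ in place of $j$, and the two alternatives in the case split simply record which of the two adjacent tiles governs the resulting grafting configuration.

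The main obstacle I anticipate is purely bookkeeping: verifying that each snake graph occurring in Theorem \ref{grafting} is exactly the snake graph of the continued fraction obtained by reading off the prescribed subsequence of $(a_1,\ldots,a_n)$. Using Lemmas \ref{lem1} and \ref{lem2}, this reduces to checking that cutting $\calg\cfa$ at the boundary edges of a straight-piece tile $G_{\ell_i}$ decomposes the rest of the graph as $\calg[a_1,\ldots,a_i] \sqcup \calg[a_{i+1},\ldots,a_n]$ with compatible sign functions, and that the analogous decompositions in part (c) are the snake graphs of the predicted continued fractions (including the ``$-1, 1$'' insertions that appear when a coefficient is split across the grafting site). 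Once this identification is confirmed, the remainder of the argument is a mechanical application of the ring homomorphism $m(\cdot)$.
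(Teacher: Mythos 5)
Your proposal is correct and follows essentially the same route as the paper: the paper specializes all variables to $1$ in the Laurent polynomial identities of Theorem \ref{grafting} (equivalently, applies the matching-count homomorphism, with $m(b_i)=1$ for the single-edge terms) and then invokes Theorem \ref{thm1} to identify $m(\calg[\cdots])$ with $\caln[\cdots]$. The only difference is that your final ``bookkeeping obstacle'' concerns the proof of Theorem \ref{grafting} itself rather than of this corollary, since that theorem already expresses every graph appearing as $\calg$ of the relevant subsequence.
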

\begin{proof}
 (a) The equation in part (a) of Theorem \ref{grafting} is an equation of Laurent polynomials in several variables $x_i$. Setting each of these variables equal to 1, we get the identity
\[\begin{array}{cc}m(\calg[a_1,\ldots,a_n])=  m(\calg[a_1,\ldots,a_i]) \,m(\calg[a_{i+1},\ldots,a_n])+ m(\calg[a_1,\ldots,a_{i-1}])\, m(\calg[a_{i+2},\ldots,a_n]).
 \end{array}\]
Now the result follows from Theorem \ref{thm1}. Part (b)  and (c) are proved in the same way.
\end{proof}

\begin{remark} Parts (a) and (b) are known formulas for continuants, see \cite[Chapter XIII]{Muir} or \cite{ustinov} for (b).

 We did not find the result (c) in the literature. An example of this identity is the following, where $i=j=3, k=1$.
 \[ \caln[1,2,3,4,3]\,\caln[1,4,3,4,1,2] = \caln[1,2,3,4,1,2]\,\caln[1,4,3,4,3]-\caln[{0,1,1,1}]\,\caln[1,1,1]\]
 The left hand side is equal to $139\cdot 239$, the right hand side is equal to $149\cdot 223 - 2\cdot 3$,  and both are equal to 33221.
\end{remark}

\begin{remark}
 The formulas in Theorem \ref{grafting} are identities for elements of the cluster algebra. This implies that in the formulas in Theorem \ref{graftingcf}, we can replace the integers $a_i$ by certain Laurent polynomials as shown in section \ref{sect cluster} (using Theorem~\ref{grafting}).  
We shall then show in section \ref{sect 62} that the formulas in Theorem \ref{graftingcf} also hold for complex numbers $a_i$.
\end{remark}

 \section{The relation to cluster  algebras} \label{sect cluster}
We have seen that the numerator of a continued fraction is equal to the number of perfect matchings of the corresponding abstract snake graph, and that it can therefore be interpreted as the number of terms in the numerator of the Laurent expansion of an associated cluster variable. In other words, if one knows the Laurent polynomial of the cluster variable, one obtains the numerator of the continued fraction by setting all variables $x_i$ (and coefficients $ y_i$) equal to 1. 

In this section, we show how to go the other way and recover the Laurent polynomials of the cluster variables from the continued fractions. More precisely, we will associate a Laurent polynomial $L_i$ to each $a_i$ in such a way that  the continued fraction $[L_1,\ldots, L_n]$ of Laurent polynomials is equal to  a quotient of cluster variables.

Let $(S,M)$ be a marked surface with a triangulation $T$ that has no self-folded triangles. Denote by $\cala=\cala(S,M)$ the associated cluster algebra with initial cluster $\mathbf{x}_T=(x_1,x_2,\ldots,x_N)$ corresponding to $T$. 
Let $\zg$ be a generalized arc in $(S,M)$, and let $\calg_\zg$ be its (labeled) snake graph as defined in section \ref{sect label}. Using Theorem \ref{thm bijections}, there is a positive continued fraction $\cfa$ such that $\calg_\zg=\calg\cfa$ as graphs. Let $\ell_i=\sum_{j=1}^i a_j$ and $\calh_1,\calh_2,\ldots,\calh_n$ be the subgraphs as defined in section \ref{sect 1}. For $i=1,2,\ldots,n-1$, denote by $b_i$ the label of the tile $G_{\ell_i}$ and let $b_0$ be the unique
 edge in $\calgSW\setminus\{e_0\}$ and $b_n$ be the unique edge in $\calgNE\setminus\{e_d\}$. We define the following Laurent polynomials, where  we simply write $b_i$ for the cluster variable $x_{b_i}$.

\begin{definition}\label{def L}
 
  \[\begin{array}{rcl}
L_1 &=&\displaystyle  x(\calh_1)\,\frac{1}{b_1}\\ \\ 
L_2 &=&\displaystyle  x(\calh_2) \,\frac{b_1}{b_0b_2}\\ \\
L_3 &=& \displaystyle x(\calh_3) \,\frac{b_0b_2}{b_1^2b_3}\\ \\
 L_i &=&  \left\{\begin{array}{ll} \displaystyle x(\calh_i)\,\frac{b_0b_2^2b_4^2\cdots b_{i-3}^2 b_{i-1}} {b_1^2b_3^2\cdots b_{i-2}^2b_{i}} &\textup{if $i$ is odd;}\\
 \\
              \displaystyle x(\calh_i) \,\frac {b_1^2b_3^2\cdots b_{i-3}^2b_{i-1}} {b_0b_2^2b_4^2\cdots b_{i-2}^2 b_{i}}&\textup{if $i$ is even.}
              \end{array}\right.
              \end{array}
\]
\end{definition}

The following explicit formula for the $x(\calh_i)$ is taken from  \cite{CF}. We include a proof here for convenience of the reader.
\begin{prop} With the above notation
 \label{prop hi}
 \[x(\calh_i)=x_{\ell_{i-1}} \left( \sum_{j=\ell_{i-1}}^{\ell_{i}-1} \frac{x_{e_j}}{x_jx_{j+1}}\right)x_{\ell_{i}},\]
where $\ell_0=0,x_{0}=b_0$ and $x_{\ell_d}=b_n$.
\end{prop}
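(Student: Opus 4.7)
The plan is to verify the formula by a direct computation using the MSW snake graph formula. By Lemma~\ref{lem2}, $\calh_i$ is a zigzag snake graph with $a_i-1$ tiles $G_{\ell_{i-1}+1},\ldots,G_{\ell_i-1}$. In the triangulated surface, these tiles form a fan: the arcs $\tau_{i_{\ell_{i-1}}},\tau_{i_{\ell_{i-1}+1}},\ldots,\tau_{i_{\ell_i}}$ all emanate from a common vertex, and $e_j$ (for $\ell_{i-1}\le j\le \ell_i-1$) is the side of the triangle $\Delta_j$ opposite to this vertex. The first step is to read off the labels of every boundary edge of $\calh_i$ from this fan structure: for each tile $G_k$ of $\calh_i$ the two non-shared sides are labeled $\tau_{i_{k-1}}$ and $\tau_{i_{k+1}}$, while the two sides shared with neighboring tiles (possibly external to $\calh_i$) are labeled $e_{k-1}$ and $e_k$.

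Next I would enumerate the $a_i$ perfect matchings of $\calh_i$, indexed by $j\in\{\ell_{i-1},\ldots,\ell_i-1\}$, where $P_j$ is the unique matching containing the edge labeled $e_j$. For $j=\ell_{i-1}$ and $j=\ell_i-1$ this edge lies on the south-west or north-east boundary of $\calh_i$, while for intermediate $j$ it is an interior edge of $\calh_i$. Since $\calh_i$ is a zigzag, choosing the ``switch'' edge $e_j$ uniquely determines the matching on either side, and the remaining edges of $P_j$ are boundary edges carrying $\tau_{i_k}$-labels.

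The weight $x(P_j)$ is then a product of these labels, and dividing by $\cross(\calh_i)=\prod_{k=\ell_{i-1}+1}^{\ell_i-1}x_{i_k}$ produces a telescoping cancellation: the boundary $\tau$-labels on either side of $e_j$ cancel with factors of $\cross(\calh_i)$ except for $x_jx_{j+1}$ in the denominator and $x_{\ell_{i-1}},x_{e_j},x_{\ell_i}$ in the numerator. Summing over $j$ and factoring out $x_{\ell_{i-1}}x_{\ell_i}$ yields the claimed formula, with the conventions $x_0=b_0$ and $x_{\ell_n}=b_n$ handling the extreme cases $i=1$ and $i=n$ where the adjacent tile $G_{\ell_{i-1}}$ or $G_{\ell_i}$ lies outside $\calg$.

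The main obstacle is the bookkeeping in this weight calculation: one must verify carefully which $\tau_{i_k}$ labels appear on the boundary edges selected by $P_j$ and check that the telescoping against $\cross(\calh_i)$ really leaves $1/(x_jx_{j+1})$, especially at the two endpoints of $\calh_i$ where the structure differs from the interior tiles. Alternatively, the computation can be organized inductively on $a_i$, extending $\calh_i$ tile by tile and matching the recursion with the corresponding telescoping sum.
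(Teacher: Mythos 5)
Your plan is correct and is essentially the paper's argument: both rest on identifying the $a_i$ perfect matchings of the zigzag $\calh_i$ by the unique distinguished edge $e_j$ each one contains, reading off the remaining boundary labels from the fan structure, and telescoping against $\cross(\calh_i)$. The paper carries this out in precisely the inductive form you offer as an alternative, peeling off the last tile to get the recursion $x(\calh_1)=x(\calh_1')\,x_{\ell_1}/x_{\ell_1-1}+x_0\,x_{e_{\ell_1-1}}/x_{\ell_1-1}$ for $i=1$ and handling general $i$ by a shift of indices.
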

\begin{proof}
 We prove the formula for $i=1$. For arbitrary $i$, the proof is the same up to a shift in the subscripts. 
 If $a_1=1$ then $\calh_1$ is a single edge $e_0$ and thus $x(\calh_1)=x_{e_0}$. On the other hand, $\ell_1=\ell_0+1$ and thus the formula in the proposition becomes 
 $x_{0}\frac{x_{e_0}}{x_0 x_1} x_1=x_{e_0}$, proving the result for $a_1=1$.
 
 Now suppose $a_1>1$.
 According to Lemma \ref{lem2},
 $\calh_1=\calg[a_1]$ is a zigzag snake graph with $a_1-1$ tiles $G_1,G_2,\ldots,G_{a_1-1}$ and $a_1$ perfect matchings. An example with $a_1=7$ is given in Figure \ref{fig hi}. 
\begin{figure}
\begin{center}
 \scriptsize 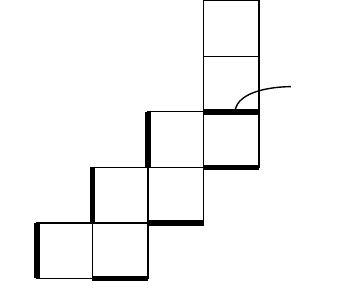
\end{center} 
\caption{An example of the graph $\calh_1$ for $a_1=\ell_1=7$. $\calh_1$ consists of the first 6 tiles. The perfect matching indicated on $\calh_1$ is the unique matching containing the edge $e_6$.}\label{fig hi}
\end{figure}
 Note that $\ell_1=a_1$. Let $\calh_1'$ be the subgraph consisting of the tiles  $G_1,G_2,\ldots,G_{a_1-2}$. Then $\calh_1'$ has $a_1-1$ perfect matchings each of which can be completed to a perfect matching of $\calh_1$ by adding the unique edge $e\in \calh_1^{N\!E}$ that is not equal to $e_{\ell_1-1}$. Note that the edge $e$ has label $\ell_1$. The only remaining perfect matching $P_0$ of $\calh_1$ does not contain the edge $e$ and it does contain the edge $e_{\ell_1-1}$ and every other boundary edge of $\calh_1$. In the example in Figure \ref{fig hi}, the matching $P_0$ is marked in black. Since $\calh_1$ is a complete zigzag snake graph, the weight of $P_0$ is $x(P_0)=x_0x_1x_2\cdots x_{\ell_1-2}x_{e_{\ell_1-1}}$, and thus
 \[\frac{x(P_0)}{x_1x_2\cdots x_{\ell_1-1}} =\frac{x_0 x_{e_{\ell_1-1}}}{x_{\ell_1-1}}.\]
 Therefore, 
 \[x(\calh_1)=x(\calh_1')\frac{x_{\ell_1}}{x_{\ell_1-1}} + \frac{x_0 x_{e_{\ell_1-1}}}{x_{\ell_1-1}},\]
 and by induction, this is equal to
 \[  x_0    \left( \sum_{j=0}^{\ell_{1}-2} \frac{x_{e_j}}{x_jx_{j+1}}\right)x_{\ell_{1}-1}    \frac{x_{\ell_1}}{x_{\ell_1-1}} + \frac{x_0 x_{e_{\ell_1-1}}}{x_{\ell_1-1}}
 =
 x_0    \left( \sum_{j=0}^{\ell_{1}-1} \frac{x_{e_j}}{x_jx_{j+1}}\right)x_{\ell_{1}}  \qedhere
 \]
\end{proof}
\medskip

To state our main result, we need to construct a second generalized arc $\zg'$ in such a way that its snake graph is obtained from the snake graph $\calg_\zg$ of $\zg$ by removing the first zigzag subgraph $\calh_1$. For an example see Figure \ref{fig exL}.
%

  Let $s$ be the starting point of the generalized arc $\zg$, and let $\zD$ be the first triangle in $T$ that $\zg$ meets. 
Then the two sides of $\zD$ that are incident to $s$ correspond to the two edges in $\calgSW_\zg$. Let $\za$ be the side that corresponds to the edge in $\calgSW_\zg$ whose sign is opposite to the sign of the first interior edge of $\calg_\zg$.  In other words, $\za = e_0$ if $a_1=1$ and $\za= b_0$ if $a_1>1$. We orient $\za$ such that its terminal point is the point $s$, and we consider the generalized arc $\zg'$ corresponding to the concatenation $\zg\circ\za$. Since arcs are considered up to isotopy, it follows from the construction that $\zg'$ avoids the first $a_i$ crossings of $\zg$ with $T$ and then runs parallel to $\zg$, see Figure~\ref{fig exL}. On the level of snake graphs we have $\calg_{\zg'}=\calg_\zg\setminus\calh_1=\calg[a_2,\ldots,a_n]$.

\medskip

We are now ready for the main theorem of this section. 
\begin{thm}
 \label{thmx}
Let $\zg$ be a generalized arc in $(S,M,T)$ and let $\calg_\zg=\calg[a_1,a_2,\ldots,a_n]$ be its snake graph. Then we have the following identity in the field of fractions of the cluster algebra $\cala(S,M)$
\[\frac{x_\zg}{x_{\zg'}} =[L_1,L_2,\ldots,L_n],\]
where $\zg'$ is the generalized arc constructed above and $L_i$ the Laurent polynomials in Definition~\ref{def L}. Moreover, $x_\zg$ and $x_{\zg'}$ are relatively prime in the ring of Laurent polynomials and thus the left hand side of the equation is reduced.

 In particular, if $\zg$ and $\zg'$ have no self-crossing, then left hand side of the equation is the quotient of two cluster variables.
 
\end{thm}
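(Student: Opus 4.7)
The plan is to proceed by induction on the length $n$ of the continued fraction, using the recursion $[L_1,L_2,\ldots,L_n] = L_1 + 1/[L_2,\ldots,L_n]$. For the base case $n=1$, the snake graph $\calg_\zg=\calg[a_1]$ is a single zigzag strip of $a_1-1$ tiles and the construction collapses $\calg_{\zg'}$ to the appropriate boundary edge of the initial triangle, so $x_{\zg'}$ is a single initial cluster variable; applying the MSW matching formula together with Proposition~\ref{prop hi} identifies $x_\zg/x_{\zg'}$ with $L_1$ exactly as given in Definition~\ref{def L}.

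The heart of the argument is the inductive step, which rests on Theorem~\ref{grafting}(a) with $i=1$. In the snake ring this asserts
\[
b_1 \cdot \calg_\zg \;=\; \calg[a_1]\cdot \calg_{\zg'} \;+\; b_0\cdot \calg_{\zg''},
\]
where $\calg_{\zg''}=\calg[a_3,\ldots,a_n]$ is the snake graph of the generalized arc $\zg''$ produced by applying the same construction to $\zg'$. Passing from matchings to cluster variables via the MSW formula and carefully tracking the $\cross$-factors — which differ between the three snake graphs by precisely the product of the initial cluster variables along $\calh_1$ together with appropriate powers of the $b$-variables — I would translate this into the Laurent-polynomial identity $x_\zg = L_1 \cdot x_{\zg'} + x_{\zg''}$. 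The exact powers of $b_0,b_1,\ldots$ in Definition~\ref{def L} are designed to cancel these $\cross$-ratios, and this is where the otherwise mysterious $b$-factors originate.

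Applying the inductive hypothesis to $\zg'$ yields $x_{\zg'}/x_{\zg''}=[L_2',\ldots,L_n']$, where $L_i'$ are the Laurent polynomials associated to $\zg'$; a parity check of Definition~\ref{def L} confirms $L_i'=L_{i+1}$ for $i=1,\ldots,n-1$, since the sign alternation in the $b$-exponents is synchronized with the index shift $i\mapsto i+1$. Dividing $x_\zg=L_1 x_{\zg'}+x_{\zg''}$ by $x_{\zg'}$ then gives
\[
\frac{x_\zg}{x_{\zg'}} \;=\; L_1 + \frac{1}{[L_2,\ldots,L_n]} \;=\; [L_1,\ldots,L_n].
\]
Coprimality is handled inductively in parallel: assuming $x_{\zg'}$ and $x_{\zg''}$ are relatively prime, any common Laurent factor of $x_\zg$ and $x_{\zg'}$ must divide $x_{\zg''}$ by the identity above, hence must be a unit. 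This is the Euclidean algorithm for continued fractions run in reverse, and it also ensures uniqueness of the reduced form. The final sentence of the theorem is then immediate, since arcs without self-crossings yield honest cluster variables by \cite{MSW}.

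The principal technical obstacle will be the $\cross$-factor bookkeeping in the inductive step — verifying that the exponents of $b_0,b_1,\ldots$ mandated by Definition~\ref{def L} are precisely the ones forced by translating the snake-ring identity into a cluster-variable identity — together with confirming the parity-sensitive shift $L_i'=L_{i+1}$ under the construction $\zg\mapsto\zg'$.
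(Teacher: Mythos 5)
Your overall strategy---induction on $n$ driven by Theorem~\ref{grafting}(a) with $i=1$, plus a Euclidean-style coprimality argument---is the same route the paper takes, and your coprimality argument is a valid alternative to the paper's (which instead counts terms of the Laurent polynomials and invokes Theorem~\ref{thm1}). However, the inductive step has a genuine gap: both of your key intermediate identities are false as stated. Translating $b_1\,\calg\cfa = \calh_1\,\calg[a_2,\ldots,a_n] + b_0\,\calg[a_3,\ldots,a_n]$ into Laurent polynomials gives
\[
x_\zg \;=\; L_1\, x_{\zg'} \;+\; \frac{b_0}{b_1}\, x_{\zg''},
\]
not $x_\zg = L_1 x_{\zg'} + x_{\zg''}$: the division by $b_1$ is absorbed into $L_1=x(\calh_1)/b_1$ on the first term, but it leaves the unit coefficient $b_0/b_1$ on the second term, and no choice of exponents in Definition~\ref{def L} makes it disappear. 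Likewise the shift you assert for the Laurent polynomials of $\zg'$ fails: the new edge $b_0'$ of $\calg_{\zg'}$ is a boundary edge of the tile $G_{\ell_1+1}$ carrying the weight $b_1$, so the first Laurent polynomial attached to $\zg'$ is $x(\calh_2)/b_2=\tfrac{b_0}{b_1}L_2$, not $L_2$, and in general the $i$-th one is $(b_0/b_1)^{\pm 1}L_{i+1}$ with alternating exponent.

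These two discrepancies do cancel in the end, but a correct proof must exhibit the cancellation rather than assert two wrong identities whose errors happen to offset. The paper's mechanism is Lemma~\ref{lem cf}, the rescaling identity $[r c_1, r^{-1}c_2, r c_3,\ldots]=r\,[c_1,c_2,\ldots]$ applied with $r=b_1/b_0$, which converts $\tfrac{b_1}{b_0}[L_2',\ldots,L_n']$ (with $L_i'$ the genuinely attached Laurent polynomials of $\zg'$) into $[L_2,\ldots,L_n]$, so that the recursion closes up to $L_1+1/[L_2,\ldots,L_n]=[L_1,\ldots,L_n]$. You correctly identified the cross-factor bookkeeping and the parity-sensitive shift as the crux, but the resolution you propose for each is incorrect, and without the rescaling lemma (or an equivalent renormalization of the $L_i$) the passage from the grafting identity to the continued fraction is unjustified.
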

\begin{remark}
 It is possible that $\zg$ has no self-crossings but $\zg'$ does. For an example see Theorem \ref{thm inf}.
\end{remark}
\begin{remark}
 Theorem \ref{thmx} suggests that one should study quotients of elements of the cluster algebra and their asymptotic behavior. We present a first result in this direction in section~\ref{sect 5}.
\end{remark}

\begin{proof} 
If $n=1$ then $\zg'=b_1$ by construction. Therefore $x_\zg/x_{\zg'}=x(\calg_\zg)/b_1 =x(\calh_1)/b_1=L_1$. 
If $ n=2$, then 
\[ [L_1,L_2]=L_1+\frac{1}{L_2}= \frac{L_1L_2+1}{L_2}= \frac{x(\calh_1) x(\calh_2)+b_0b_2}{b_1 \,x(\calh_2)}= \frac{x(\calg[a_1]) x(\calg[a_2])+b_0b_2}{b_1 \,x(\calg[a_2])}, \]
and using Theorem \ref{grafting}(a) with $i=1$ and $n=2$ this is equal to
$ x(\calg[a_1,a_2])/x(\calg[a_2]) = x_{\zg}/x_{\zg'}$.

Now suppose that $ n>2$. Again  using Theorem \ref{grafting}(a), we have
\[ b_1\, \calg\cfq = \calh_1 \,\calg[a_2,a_3, \ldots,a_n] +b_0\, \calg[a_3,a_4,\ldots,a_n].\]
Passing to the Laurent polynomials and 
dividing by $x(\calg[a_2,a_3, \ldots,  a_n])$ and $b_1$ we get
\[\frac{x_\zg}{x_{\zg'}}=
\frac{x(\calg\cfq)}{x(\calg[a_2,a_3,\ldots,a_n])}  = \frac{x(\calh_1 )}{b_1 }+\frac{1}{ \frac{b_1}{b_0} \frac{x(\calg[a_2,a_3, \ldots,a_n])}{ x(\calg[a_3,a_4,\ldots,a_n])}}.\]
Using the definition of $L_1$ and induction, this is equal to 
\begin{equation}\label{eq lem cf} L_1+\frac{1}{ \frac{b_1}{b_0} [L_2',L_3',\ldots,L_n'] }\end{equation}
where $L_i'= \frac{b_0}{b_1} L_i$ if $i$ is even, and  $L_i'= \frac{b_1}{b_0} L_i$ if $i$ is odd.
Lemma \ref{lem cf} implies that (\ref{eq lem cf}) is equal to 
\[ L_1+\frac{1}{ [L_2,L_3,\ldots,  L_n]}\]
which in turn equals $\cfl$.
Finally, if a Laurent polynomial $f$ is a common divisor of $x_\zg$ and $ x_{\zg'}$ then the number of terms in $f$ is a common divisor of the number of terms in $x_\zg$ and $x_{\zg'}$, and Theorem \ref{thm1} implies that $f$ is a Laurent monomial. Thus $f$ is a unit in the ring of Laurent polynomials.
%
%
%
This completes the proof.
 \end{proof}
 
Since $x_\zg$ and $x_{\zg'}$ are relatively prime, we obtain the following formula for $x_\zg$.
\begin{cor}
 With the notation of Theorem \ref{thmx}, we have 
 \[x_\zg=\caln\,\cfl \cdot M,\]
 for some Laurent monomial $M$.
\end{cor}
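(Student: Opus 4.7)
The plan is to deduce this corollary directly from Theorem \ref{thmx}. Define the numerator polynomials $\caln[L_i,\ldots,L_n]$ by the same recursion as for integer continued fractions, namely $\caln[L_i,\ldots,L_n] = L_i\,\caln[L_{i+1},\ldots,L_n] + \caln[L_{i+2},\ldots,L_n]$ with empty-list convention $\caln[\,]=1$. This recursion yields the formal identity $\cfl = \caln\cfl/\caln[L_2,\ldots,L_n]$ in the rational function field in the indeterminates $L_1,\ldots,L_n$, and hence in every commutative ring into which the $L_i$ may be specialized, in particular in the ambient Laurent polynomial ring. Combining this with Theorem \ref{thmx} gives
\[
x_\zg\,\caln[L_2,\ldots,L_n]\;=\;x_{\zg'}\,\caln\cfl.
\]
Since $x_\zg$ and $x_{\zg'}$ are relatively prime in the Laurent polynomial ring (the last assertion of Theorem \ref{thmx}), $x_\zg$ must divide $\caln\cfl$, so we can write $\caln\cfl = M\cdot x_\zg$ for some Laurent polynomial $M$.

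The remaining and substantive task is to show that $M$ is in fact a Laurent monomial. I would argue this by a counting argument. Setting every initial cluster variable equal to $1$: by the MSW formula and Lemma \ref{lem2}, $x(\calh_i)|_{x=1}$ equals the number of perfect matchings of $\calh_i$, which is $a_i$; the Laurent monomial prefactor in Definition \ref{def L} evaluates to $1$ at all $x_j=1$; hence $L_i|_{x=1} = a_i$, and the polynomial recursion for $\caln$ gives $\caln\cfl|_{x=1} = \caln\cfa$. By Theorem \ref{thm1}(a) we also have $x_\zg|_{x=1} = \caln\cfa$, so $M|_{x=1}=1$. Now each $L_i$ is a Laurent polynomial with positive integer coefficients and exactly $a_i$ distinct monomial terms (the monomial prefactor merely translates the support of $x(\calh_i)$, whose $a_i$ perfect matchings produce distinct weights). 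An induction on $n$ using the recursion for $\caln$ shows that $\caln\cfl$ has nonnegative integer coefficients and at most $\caln\cfa$ distinct Laurent monomial terms. Since $x_\zg$ has exactly $\caln\cfa$ distinct Laurent monomial terms, each with coefficient $1$, the factorization $\caln\cfl = M x_\zg$ then forces $M$ to have a single monomial of coefficient $1$, i.e.\ to be a Laurent monomial.

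The main obstacle I anticipate is verifying the \emph{no cancellation} step in the induction, so that the upper bound on the term count of $\caln\cfl$ is saturated with positive coefficients matching those of $x_\zg$. A more constructive alternative that would bypass this point is to produce $M$ in closed form: in the case $n=2$ a direct computation using Theorem \ref{grafting}(a) gives $M = b_1\,\cross(\calg_\zg)/(b_0\,b_2)$, and an induction based on the same grafting identity together with the explicit shape of the $L_i$ from Definition \ref{def L} should yield a general closed formula expressing $M$ as a Laurent monomial in $\cross(\calg_\zg)$ and the edge labels $b_0,\ldots,b_n$, rendering the monomial claim manifest.
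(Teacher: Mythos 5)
Your first paragraph reproduces what the paper actually does: its entire proof of this corollary is the one\mbox{-}line observation that the formula follows from the relative primality of $x_\zg$ and $x_{\zg'}$, and your derivation of $\caln\cfl=M\cdot x_\zg$ with $M$ a Laurent \emph{polynomial} is the careful version of that line. You are also right that promoting $M$ from Laurent polynomial to Laurent monomial is the real content. The problem is that the counting argument you offer for this step rests on a false premise: it is not true that $x_\zg$ has exactly $\caln\cfa$ distinct Laurent monomials, each with coefficient $1$. Distinct perfect matchings of $\calg_\zg$ can have equal weight because edge labels repeat once the surface is not a polygon. For the annulus with two marked points (Kronecker quiver), the cluster variable with snake graph $\calg[1,1,1,1]$ is $(x_2^4+2x_2^2+1+x_1^2)/(x_1^2x_2)$: it has $4$ distinct monomials, one with coefficient $2$, while $\caln[1,1,1,1]=5$. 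So the comparison ``$\caln\cfl$ has at most $\caln\cfa$ distinct terms, $x_\zg$ has exactly $\caln\cfa$'' cannot squeeze $M$ down to a single term. Nor does $M|_{\x=1}=1$ (which you do establish correctly) finish the job on its own: a quotient of two Laurent polynomials with nonnegative coefficients need not have nonnegative coefficients, e.g.\ $(1+x^3)/(1+x)=1-x+x^2$, and this is precisely the ``no cancellation'' issue you flag, which does not come for free.

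Your constructive fallback is the correct route, and it is essentially already contained in the proof of Theorem \ref{thmx}: iterating the grafting identity of Theorem \ref{grafting}(a), i.e.\ running the induction of that proof while keeping track of numerators and denominators instead of discarding the monomial factors, produces $M$ explicitly as a Laurent monomial in $b_0,\dots,b_n$. For $n=2$ one gets
\[
\caln[L_1,L_2]=L_1L_2+1=\frac{x(\calh_1)x(\calh_2)+b_0b_2}{b_0b_2}=\frac{b_1\,x_\zg}{b_0b_2},
\]
so $x_\zg=\caln[L_1,L_2]\cdot\frac{b_0b_2}{b_1}$; note that with the paper's convention $x_\zg=x(\calg_\zg)$ already carries the factor $1/\cross(\calg_\zg)$, so no extra crossing monomial should appear in $M$ (your formula has a stray $\cross(\calg_\zg)$). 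I would make this explicit induction the proof and drop the term-counting argument entirely.
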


\begin{remark} Theorem \ref{thmx} provides a very efficient way to compute the Laurent expansion of a  cluster variables $x_\zg$, without determining the perfect matchings of the snake graph $\calg_\zg$.
\end{remark}

\begin{example}\label{ex 6.6} To illustrate the theorem, we compute the quotient of the cluster variables associated to the arcs $\zg$ and $\zg'$ in the triangulation shown on the left of Figure \ref{fig exL}. 
\begin{center}
\begin{figure}
  \tiny\scalebox{1.15}{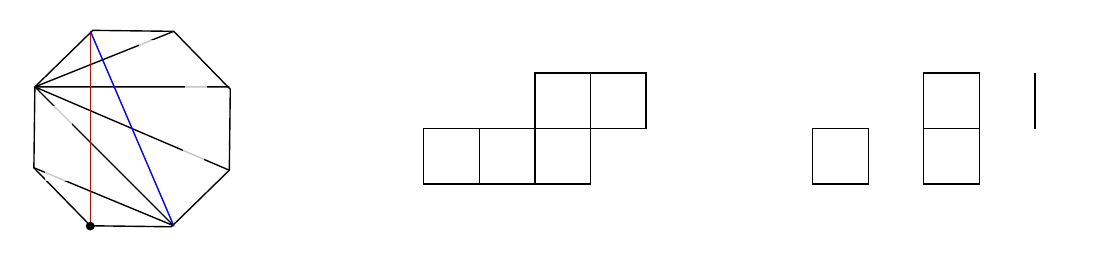} 
\caption{Example \ref{ex 6.6}}\label{ExCondFracSG}
\label{fig exL}\end{figure}
\end{center}The snake graph $\calg_\zg$ of $\zg$ is shown in the center of Figure \ref{ExCondFracSG}. If we choose $e_5\in\calgNE$ to be the east edge then $\calg$ corresponds to the continued fraction $[2,3,1]$ and the corresponding subgraphs $\calh_1,\calh_2,\calh_3$ are shown on the right of the figure. By definition, we have $b_1=x_2, b_2=x_5$ and 
\[L_1=\frac{x(\calh_1)}{x_2}, L_2=\frac{x(\calh_2) x_2}{ b_0x_5}, L_3=\frac{x(\calh_3) b_0x_5}{x_2^2 b_3}.\]
Thus 
\[\begin{array}{rcl}[L_1,L_2,L_3]&=& (L_1L_2L_3+L_1+L_3)/(L_2L_3+1)\\
\\
&=& \displaystyle\frac{\frac{ \displaystyle x(\calh_1)x(\calh_2)x(\calh_3)}{  \displaystyle x_2^2b_3}+\frac{ \displaystyle x(\calh_1)}{ \displaystyle x_2}+\frac{ \displaystyle x(\calh_3)b_0x_5}{ \displaystyle x_2^2  b_3}} {\frac{ \displaystyle x(\calh_2)x(\calh_3) +x_2b_3}{ \displaystyle x_2b_3}}\\
\end{array}\]
and expanding by $\frac{x_2b_3}{x_5}$ shows that this quotient is equal to 

\[  \frac{ \frac{ \displaystyle x(\calh_1)x(\calh_2)x(\calh_3)}{ \displaystyle x_2x_5}+\frac{ \displaystyle x(\calh_1)x_2x_3x_4b_3}{ \displaystyle x_2x_3x_4x_5}+\frac{ \displaystyle x(\calh_3)b_0x_1x_3x_4x_5}{ \displaystyle x_1x_2x_3x_4x_5}}{\frac{ \displaystyle x(\calh_2)x(\calh_3)}{ \displaystyle x_5} +\frac{ \displaystyle x_2x_3x_4b_3}{ \displaystyle x_3x_4x_5}}\] 

On the other hand, \[x_\zg=x(\calg_\zg)=\frac{1}{x_1x_2x_3x_4x_5}\ { \displaystyle \sum_{P\in\Match\calg_\zg}} x(P),\] where the sum is over all perfect matchings of $\calg_\zg$.
The snake graph $\calg_\zg$ has precisely 9 perfect matchings which are shown in Figure \ref{fig exL2}.
The two perfect matchings on the left correspond to the term $x(\calh_1)x_2x_3x_4b_3/x_2x_3x_4x_5$, the six perfect matchings in the center correspond to the term $ x(\calh_1)x(\calh_2)x(\calh_3)/x_2x_5$ and the perfect matching on the right corresponds to the term $x(\calh_3)b_0x_1x_3x_4x_5/x_1x_2x_3x_4x_5$.
\begin{center}
\begin{figure}
  \scalebox{0.8}{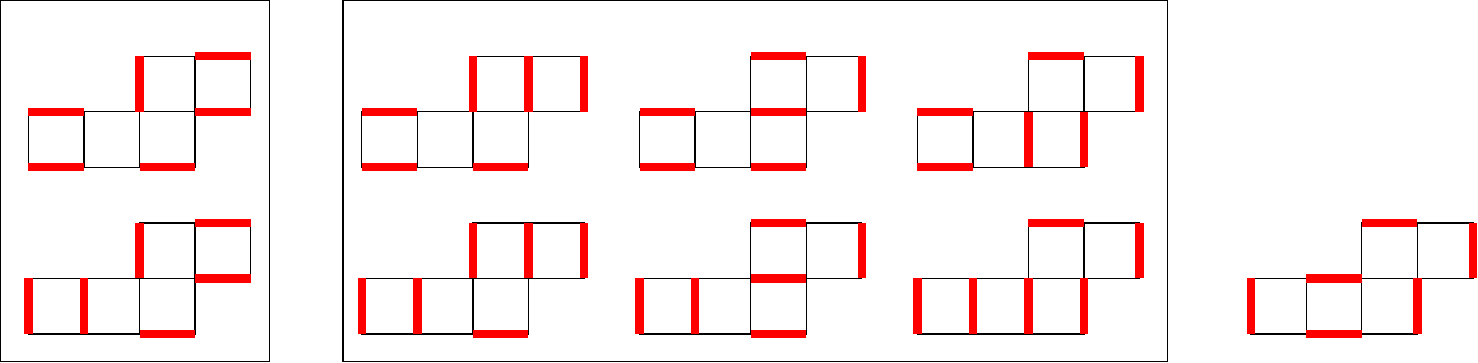} 
\caption{The 9 perfect matchings of $\calg_\zg$ in Example \ref{ex 6.6}}
\label{fig exL2}\end{figure}
\end{center}
Similarly, the snake graph $\calg_{\zg'}$ of $\zg'$ consists of the 3 tiles labeled 3, 4 and 5 and has four perfect matchings.  Three of these perfect matchings correspond to the term $x(\calh_2)x(\calh_3)x_5$ and the other to the term $x_2x_3x_4b_3/x_3x_4x_5.
$ 

Thus we see that \[\frac{x_\zg}{x_{\zg'}} = [L_1,L_2,L_3].\]

\smallskip
Alternatively, according to Theorem \ref{thm bijections}, we can also choose $e'_5\in\calgNE$ to be the north edge, and then $\calg$ corresponds to the continued fraction $[2,4]$. If we denote by $\calh_i',b_i'$ the corresponding subgraphs and by $L_i'$ the Laurent monomials, then  we have 
\begin{itemize}
\item[-] $\calh_1'=\calh_1$ and $\calh_2'$ consists of the 3 tiles labeled 3,4,5;
\item[-] $b_0'=b_0, b_1'=x_2$, and $b_2'=e_5$ is the east edge of $\calgNE$;
\item[-] $L_1'=L_1$ and \[L_2'=x(\calh'_2)\frac{b_1'}{b_0'b_2'}=x(\calh'_2)\frac{x_2}{b_0e_5}.\]
\end{itemize}

Therefore 
\[ [L_1',L_2']= \frac{L_1'L_2'+1}{L_2'} = \left.\left(\frac{x(\calh_1)x(\calh_2')}{x_2} +\frac{b_0 e_5x_1x_3x_4x_5}{x_2x_1x_3x_4x_5}\right)\right/ x(\calh_2'),\]
which again is equal to $x_\zg/x_\za$.
In Figure \ref{fig exL2}, the 2 perfect matchings in the first frame together with the 6 perfect matchings in the second frame correspond to the term $\frac{x(\calh_1)x(\calh_2')}{x_2}$ in the expression above, whereas the single perfect matching on the right of Figure~\ref{fig exL2} corresponds to the term $\frac{b_0 e_5x_1x_3x_4x_5}{x_2x_1x_3x_4x_5}.$
\end{example}

\begin{remark}
 The proof of Theorem~\ref{thmx} actually shows a stronger result. Namely that 
 \[\frac{x(\calg\cfa)}{x(\calg[a_2,\ldots,a_n])} =[L_1,L_2,\ldots,L_n]\] 
 for every labeled snake graph $\calg\cfa $ whose labels satisfy the condition that for every straight subsnake graph of three consecutive tiles $G_{b_i-1},G_{b_i},G_{b_i+1}$ 
\begin{enumerate}
\item   the north edge of $ G_{b_i-1}$ and the south edge of $G_{b_i+1}$ both have the same weight $b_i$, if the subsnake graph is horizontal;
\item   the east edge of $ G_{b_i-1}$ and the west edge of $G_{b_i+1}$ both have the same weight $b_i$, if the subsnake graph is vertical.
\end{enumerate}
\end{remark}

\subsection{Complex continued fractions}\label{sect 62}
Theorem \ref{thmx} allows us to consider continued fractions of complex numbers as follows. Let $[z_1,z_2,\ldots,z_n]$ be a continued fraction of complex numbers $z_j\in\mathbb{C}$.
Define $a_j=\lceil \,|z_j |\, \rceil $ to be the least integer larger or equal to the absolute value $|z_j|$ of $z_j$. Thus 
$\cfa$ is a positive continued fraction and we can associate to it the snake graph $\calg\cfa$. Let $d$ be the number of tiles as usual. We can realize this snake graph as the snake graph of an arc $\zg$ in a triangulated polygon $(S,M,T)$ such that each edge of the snake graph corresponds to an arc in $T$ and hence to a cluster variable. It is important to realize that, since we are working in a polygon, the label of the edge $e_j$, for $j=0,1,\ldots, d$, is different from the label of any other edge of $\calg\cfa$. 

We 
 could now compute the quotient of cluster  variables $x_\zg/x_{\zg'}$ as in Theorem~\ref{thmx}. 
However, we will  use the following specialization of the cluster variables. Recall that the integers $\ell_i=\sum_{j=1}^i a_j$ and the subgraphs $\calh_i$ are defined in such a way that the interior edge $e_{\ell_i-1}$ lies in $\calh_i^{N\!E}$. We set $x_{e_{\ell_j-1} }= z_i-a_i+1$, for $j=1,2,\ldots,n$ and $x_e=1$, for all other edges $e$ in $\calg\cfa$. Then Proposition~\ref{prop hi} implies that under this specialization
\[x(\calh_i)=\sum_{j=\ell_{i-1}}^{\ell_i-1} x_{e_j} =x_{e_{\ell_i-1} }+\ell_i-\ell_{i-1}-1 
= z_i-a_i+1+a_i-1=z_i.\]
Moreover, under this specialization all $b_i$ have label 1, and thus $L_i=x(\calh_i)=z_i$. Therefore
\[\caln[z_1,z_2,\ldots,z_n]=\caln[L_1,L_2,\ldots,L_n],\]
where the left hand side is the continuant of $[z_1,z_2,\ldots,z_n]$.
This shows that the formulas of Theorem \ref{graftingcf} also hold for continued fractions of complex numbers.
\begin{example} 
 If $[z_1,z_2]=[2i,-3+i]$, we have $a_1=2$, $a_2= \lceil \sqrt{10}\,\rceil =4$, and the special weights are $x_{e_1}=2i-1$ and $ x_{e_6}= -6+i$. Then $L_1=2i$, $L_2= -3+i$, so $\caln[L_1,L_2]=2i(-3+i)+1= z_1z_2+1=\caln[z_1,z_2]$.
\end{example}

\subsection{An application to quiver Grassmannians}
We keep the notation of Theorem \ref{thmx}. Let $\zL_T$ be the Jacobian algebra (over $\mathbb{C}$) of the quiver with potential determined by the triangulation $T$, see \cite{Labardini}, and let $M_\zg$ be the indecomposable $\zL$-module corresponding to $\zg$. From now on assume that $\zg$ has no self-crossing. Then $M_\zg$ is a rigid module and $x_\zg$ is a cluster variable. Then the cluster character formula  \cite{Pa} implies that 
\begin{equation}
 \label{eqgr} 
 x_\zg = \sum_{\underline e} \chi(\mathrm{Gr}_{\underline e} (M_\zg)) \prod_{i=1}^N x_i^{a_i(\underline{e})},
\end{equation}
for some ${a_i(\underline{e})}\in \mathbb{Z}$, where the sum runs over all dimension vectors $\underline{e}\in \mathbb{Z}_{\ge 0}^N$, $\mathrm{Gr}_{\underline e} (M_\zg) $ denotes the quiver Grassmannian of $M_\zg$ of submodules of dimension vector $\underline{e}$, and $\chi$ is the Euler characteristic. The positivity theorem for cluster variables implies that $ \chi(\mathrm{Gr}_{\underline e} (M_\zg))\ge 0$. 

Let $\calg_\zg$ be the snake graph of $\zg$ and $\cfa$ be its continued fraction.
Comparing the formula (\ref{eqgr}) to the perfect matching formula of \cite{MSW},  and specializing all $x_i=1$, we see that the number of perfect matchings of $\calg_\zg$ is equal to the sum  $\sum_{\underline e} \chi(\mathrm{Gr}_{\underline e} (M_\zg)$. Now Theorem~\ref{thm1} implies the following.
\begin{cor}
 \label{corgr} 
 With the notation above
\[\sum_{\underline e} \chi(\mathrm{Gr}_{\underline e} (M_\zg))= \caln\cfa. \]
\end{cor}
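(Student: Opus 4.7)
The plan is to match two different formulas for the cluster variable $x_\zg$ by specializing all initial cluster variables to $1$, and then invoke Theorem~\ref{thm1}(a).

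First I would take the cluster character identity (\ref{eqgr}) and set $x_i = 1$ for every $i=1,\ldots,N$. Each Laurent monomial $\prod_{i=1}^N x_i^{a_i(\underline e)}$ collapses to $1$, so this specialization yields
\[x_\zg\big|_{x_i=1} \ =\ \sum_{\underline e} \chi(\mathrm{Gr}_{\underline e}(M_\zg)).\]
Next I would invoke the perfect matching formula of \cite{MSW} recalled in section~\ref{sect label}, namely
\[x_\zg\ =\ \frac{1}{\cross(\calg_\zg)} \sum_{P\in\Match\calg_\zg} x(P),\]
and apply the same specialization. The crossing factor $\cross(\calg_\zg)$ is a product of initial cluster variables and each weight $x(P)$ is a monomial in the initial cluster variables and the boundary edge variables (which are also set to $1$), so every term collapses to $1$ and the right-hand side becomes precisely $m(\calg_\zg)$, the number of perfect matchings.

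Equating the two specializations produces $\sum_{\underline e} \chi(\mathrm{Gr}_{\underline e}(M_\zg)) = m(\calg_\zg)$, and then Theorem~\ref{thm1}(a) identifies $m(\calg_\zg) = m(\calg\cfa) = \caln\cfa$, giving the corollary. There is no real obstacle here: the argument is purely a comparison of two Laurent-polynomial identities evaluated at a common point, and positivity of the Euler characteristics is not needed for the equality itself (only for its interpretation as an honest enumeration). The one small point requiring attention is the standing hypothesis from section~\ref{sect label} that the triangulation $T$ has no self-folded triangles, so that the MSW formula takes the clean form above; in the presence of self-folded triangles one would have to substitute the modified formula of \cite{MSW}, but the specialization argument is structurally the same.
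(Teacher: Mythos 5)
Your argument is correct and is essentially the paper's own proof: the text preceding the corollary compares the cluster character formula (\ref{eqgr}) with the perfect matching formula of \cite{MSW}, specializes all initial variables to $1$ to get $\sum_{\underline e}\chi(\mathrm{Gr}_{\underline e}(M_\zg))=m(\calg_\zg)$, and then applies Theorem~\ref{thm1}. Your remark about the no-self-folded-triangles hypothesis matches the standing assumption made at the start of section~\ref{sect cluster}.
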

 
 \section{Infinite continued fractions and limits of rational functions from cluster algebras} \label{sect 5}
 If $[a_1,a_2,\ldots ]$ is an infinite positive continued fraction, we  associate to it an infinite snake graph $\calg[a_1,a_2,\ldots ]$ which is defined in the same way as in section~\ref{sect 1}. Of course this infinite snake graph has infinitely many perfect matchings, and  the numerator of the continued fraction is also infinite. However, according to Proposition \ref{thm162R}, the continued fraction converges  and its limit is a real irrational number. Therefore Theorem \ref{thm1} implies that
 
\[ [a_1,a_2,\ldots ] =\lim_{n \to \infty} \frac{m(\calg[a_1,a_2,\ldots,  a_n])}{m(\calg[a_2,a_3,\ldots,  a_n])}.\]

\begin{example}
 The limit of the ratios of  consecutive Fibonacci numbers is the golden ratio. Thus
 \[[\,\overline{1}\,]=[1,1,1,\ldots ] =\frac{1+\sqrt{5}}{2}.\]
 The limit of the ratios of  consecutive Pell numbers is the silver ratio. Thus
  \[[\,\overline{2}\,]= [2,2,2,\ldots ] =1+\sqrt{2}.\]
 
\end{example}
 
 Using our construction of section \ref{sect cluster}, we can associate to  a  continued fraction $[a_1,a_2, \ldots] $ the rational function $[L_1,L_2,\ldots ]$. If each of the finite snake graphs $\calg\cfa$ has a geometric realization in the same cluster algebra, this rational function represents a limit of quotients of elements of the cluster algebra.
 
 For example, consider the torus with one puncture and denote by $\x=(x_1,x_2,x_3)$ the initial cluster of the corresponding cluster algebra, see Figure~\ref{figsnakestraight}. 
 The infinite arc $\zg$  crosses the arcs labeled 1 and 2 of the triangulation alternatingly. The corresponding infinite snake graph, shown on the right of the figure, is a straight snake graph  with alternating tile labels $1,2,1,2,\ldots $ and all interior edges have weight $x_3$. The associated continued fraction is the continued fraction of the golden ratio  $[\,\overline{1}\,]=[1,1,1,\ldots]$. Let $[L_1,L_2,L_3,\ldots]$ be the corresponding continued fraction of rational functions as defined in section \ref{sect cluster}. Thus 
\begin{center}
\begin{figure}
\small 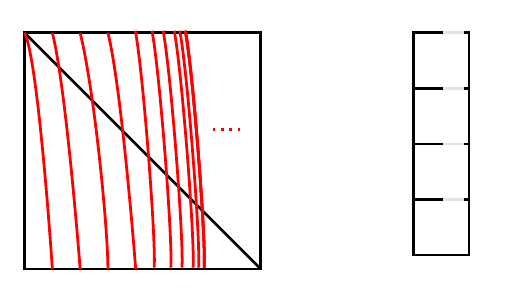
 \caption{A triangulation of the torus with one puncture (in black) and an infinite arc $\zg$ (in red) is shown on the left. The infinite snake graph $\calg$ of $\zg$ is shown on the right. The tiles have labels $1,2,1,2,\ldots$, the south edge $e_0$ and all interior edges have label $3$,   $x_{b_{2i-1}}=x_1, x_{b_{2i}}=x_2,$ for $i=1,2, \ldots$, and $x_{e_0}=x_3$. }
  \label{figsnakestraight}
\end{figure}
\end{center}
 \[L_1= e_0/b_1 = x_3/x_1,\quad L_2= e_1b_1/b_0b_2 = x_3x_1/x_2^2, \quad L_3= e_2b_0b_2/b_1^2b_3 = x_3x_2^2/x_1^3,\]
  \begin{equation}\label{eq51} L_i =\frac{ x_3}{x_2} \left(\frac{x_1}{x_2 }\right)^{i-1}, \textup{ if $i$ is even} \quad ;\quad L_i=   \frac{ x_3}{x_1} \left(\frac{x_2}{x_1 }\right)^{i-1}, \textup{ if $i$ is odd.}
 \end{equation}
 
\begin{lem}
$[L_1,L_2,\ldots]$ converges pointwise whenever $x_1,x_2,x_3\in\mathbb{R}_{>0}$.\end{lem}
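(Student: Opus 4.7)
The plan is to invoke the classical Seidel--Stern convergence criterion: for a continued fraction $[a_1,a_2,\ldots]$ with $a_i>0$, the continued fraction converges if and only if $\sum_{i\ge 1} a_i=\infty$. Since each $L_i$ is manifestly positive when $x_1,x_2,x_3>0$, it suffices to establish divergence of $\sum L_i$ in every case.

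Set $r=x_1/x_2$, so from~(\ref{eq51}) we have
\[
L_{2k}=\frac{x_3}{x_2}\,r^{2k-1}\qquad\text{and}\qquad L_{2k-1}=\frac{x_3}{x_1}\,r^{-2(k-1)}
\]
for $k\ge 1$. I would then split into the three cases $r>1$, $r<1$, and $r=1$. If $r>1$, the even-indexed subseries $\sum_{k} L_{2k}=(x_3/x_2)\sum_k r^{2k-1}$ already diverges. If $r<1$, the odd-indexed subseries $\sum_{k}L_{2k-1}=(x_3/x_1)\sum_k r^{-2(k-1)}$ diverges since $r^{-2}>1$. If $r=1$, every $L_i$ equals $x_3/x_1>0$, so $\sum L_i=\infty$ trivially. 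In all three cases $\sum_{i\ge1}L_i=\infty$, and Seidel--Stern yields convergence of $[L_1,L_2,\ldots]$.

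The argument is essentially book-keeping once the right criterion is identified. The only mild obstacle is locating and citing the Seidel--Stern theorem in the form needed (e.g.\ \cite[Ch.~II, \S14]{Perron}); everything else reduces to recognizing the two interleaved geometric series in~(\ref{eq51}) and observing that their common ratios $r^{2}$ and $r^{-2}$ cannot both be strictly less than $1$.
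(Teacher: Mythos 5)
Your proposal is correct and follows essentially the same route as the paper: both invoke the Seidel--Stern criterion and reduce the matter to the observation that the interleaved geometric subsequences with ratios $(x_1/x_2)^2$ and $(x_2/x_1)^2$ cannot both be summable. The only cosmetic difference is that the paper shows $\lim L_i \neq 0$ by contradiction and applies the divergence test, whereas you sum the two geometric subseries directly.
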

\begin{proof} Let $x_1,x_2,x_3\in\mathbb{R}_{>0}$, and denote by $r_i$ the value of $L_i$ at the point $(x_1,x_2,x_3)$. 
From the formulas above, we see that $r_i\in\mathbb{R}_{>0}$.
 By a classical result of Seidel and Stern, see \cite[\S50, Satz 8]{Perron}, the continued fraction $[r_1,r_2,\ldots]$ with $r_i\in \mathbb{R}_{>0}$ converges if and only if the series $\sum r_i$  diverges. Thus, using the divergence test, it suffices to show that  $\lim_{i\to\infty} r_i \ne 0$.
  Suppose to the contrary that the sequence $r_i$ converges to zero. Then also the subsequence $r_{2i}$ with even indices converges to zero. Thus equation (\ref{eq51}) yields
  \[0=\lim_{i\to\infty} r_{2i} = \lim_{i\to\infty} \frac{x_3}{x_2} \left(\frac{x_1}{x_2 }\right)^{2i-1},\] 
which implies that $x_1<x_2$. However, the same argument for the odd indices gives
 \[0=\lim_{i\to\infty} r_{2i+1} = \lim_{i\to\infty} \frac{x_3}{x_1} \left(\frac{x_2}{x_1}\right)^{2i-1},\] 
which implies that $x_1>x_2$, a contradiction. 
\end{proof}

 Our next goal is to compute the value of $[L_1,L_2,\ldots]$.
From equation (\ref{eq51}) we see that  $L_{2i+2}=\cfrac{x^2_1}{x^2_2}\,L_{2i} $ and $L_{2i+1}=\cfrac{x^2_2}{x^2_1}\,L_{2i-1}$.  Therefore Lemma \ref{lem cf} implies 
 \begin{equation}\label{eqinf}[L_3,L_4,\ldots] =  \frac{x^2_2}{x^2_1}\,[L_1,L_2,\ldots].\end{equation}
 
 Let $\za=[L_1,L_2,\ldots]$. Then 
 \[ \za=L_1+ \cfrac{1}{L_2 +\cfrac{1}{[L_3,L_4, \ldots]}} 
 =\cfrac {L_1L_2[L_3,L_4,\ldots] + L_1+[L_3,L_4,\ldots] }{L_2[L_3,L_4,\ldots] +1} .\]
 Using equations (\ref{eq51}) and (\ref{eqinf}), we get
 \[\za =\left. \left( \frac{x_3^2}{x_1^2} \za +\frac{x_3}{x_1}+\frac{x_2^2}{x_1^2}\za \right)\right/ \left( \frac{x_3}{x_1}\za +1\right)\]
 and thus
 \[  {x_3}\za^2 +\za\left(x_1-\frac{x_2^2}{x_1}-\frac{x_3^2}{x_1}\right)
 -{x_3}=0.\]
 Using the short hand   $x_1'=\displaystyle\frac{x_2^2+x_3^2}{x_1}$ we obtain
 \[ x_3\za^2 +\za(x_1-x_1')
 -x_3=0.\]
  
Now the quadratic formula yields
\[\za = \frac{{ x'_1-x_1} \pm\sqrt{\left( x'_1-x_1\right)^2+4x_3^2}}{2x_3}. \]

If we consider $\za$ as a function of $x_1,x_2,x_3\in \mathbb{R}_{>0}$, we have $\za>0$ and thus the sign in front of the radical must be positive.
   
We have proved part (a) of the following result.

\begin{thm}\label{thm inf} Let $\cala$ be 
 the cluster algebra of the once punctured torus with initial cluster $\x=(x_1,x_2,x_3)$. 
 Let $x'_1=(x_2^2+x_3^2)/x_1$ denote the cluster variable obtained from $\x$ by the single mutation $\mu_1$. Let  $u(i)$ denote the cluster variable obtained from $\x$ by the mutation sequence  $\mu_1\mu_2\mu_1\mu_2\mu_1\cdots$ of length $i$, and let $\calg(i)$ be its snake graph. Note that $\calg(i)$ is a straight snake graph with $2i-1$ tiles, with alternating tile labels $1,2,1,2,\ldots 1$ and all interior edges have weight $x_3$. Let  $v(i)$ be the  Laurent polynomial corresponding to the snake graph $\calg(i)\setminus 
\calh_1$ 
obtained from $\calg$ by removing the  edge $e_0$.  
\begin{itemize}
\item [\textup{(a)}]
 The quotient $u(i)/v(i)$   converges as $i$ tends to infinity and its limit is  
\[\za =  \frac{{ x'_1-x_1} +\sqrt{\left({ x'_1-x_1}\right)^2+4x_3^2}}{2x_3}. \]

\item [\textup{(b)}]
The quotient of the cluster variables $u(i)/u(i-1)$ converges as $i$ tends to infinity and its limit is 
\[\zb
= \frac{x_1'+x_1+\sqrt{\left({x_1'-x_1}\right)^2+4x_3^2}}{2x_2} =\frac{x_3\za+x_1}{x_2}.\]

\item [\textup{(c)}]
  Let $z={ \frac{ x'_1-x_1}{x_3} }$. Then $\za$ has the following  periodic continued fraction expansion
\[\za= \left\{\begin{array}{ll}\left[\,\overline{ z}\,\right] , &\textup{if $z  >0$;}\\ \\
\left[ 0, \overline{-z }\,\right] &\textup{if $z < 0$} .\end{array}\right.\]
\item [\textup{(d)}]
 Equivalently
\[\za =\left[ \,\overline{\left(\frac{x_2^2+x_3^2-x_1^2}{x_1x_3}\right)} \,\right]  \ or\  \za =\left[ \,0,\overline{\left(\frac{x_1^2-x_2^2-x_3^2}{x_1x_3}\right)} \,\right] \]

\end{itemize}
 \end{thm}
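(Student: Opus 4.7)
Part (a) is essentially completed by the discussion immediately preceding the theorem, which shows that the limit $\alpha = \lim_{i \to \infty} u(i)/v(i) = [L_1, L_2, \ldots]$ must satisfy the quadratic $x_3 \alpha^2 + (x_1 - x_1')\alpha - x_3 = 0$, and the positivity of $\alpha$ (as a limit of Laurent polynomials evaluated on $\mathbb{R}_{>0}^3$) selects the root stated. The remaining parts go as follows.

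For (b), I would extend the mutation sequence to a full sequence by setting $z_0 = x_1$, $z_1 = x_2$, and $z_{k+1} = u(k)$ for $k \ge 1$. Since the once-punctured torus carries the Markov quiver (invariant under all mutations up to orientation) and $x_3$ is never mutated, every step of the alternating sequence $\mu_1 \mu_2 \mu_1 \cdots$ yields an exchange relation of the form
\[ z_{k+1} z_{k-1} = z_k^2 + x_3^2. \]
Define $C_k = (z_k^2 + z_{k-1}^2 + x_3^2)/(z_k z_{k-1})$. A short calculation shows that the numerator of $C_{k+1} - C_k$ factors as $(z_{k+1} - z_{k-1})(z_{k+1} z_{k-1} - z_k^2 - x_3^2)$, which vanishes by the recursion. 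Hence $C_k$ is a constant $C$; evaluating at $k = 1$ and using $x_1 x_1' = x_2^2 + x_3^2$ gives $C = (x_1 + x_1')/x_2$. The relation $C z_k = z_{k+1} + z_{k-1}$ is thus the linear recursion $z_{k+1} = C z_k - z_{k-1}$, whose characteristic roots are $(C \pm \sqrt{C^2-4})/2$. Since $C > 2$ (equivalent to $(x_1-x_2)^2 + x_3^2 > 0$) these are real, positive and distinct, and the positivity of the $z_k$'s forces $z_{k+1}/z_k$ to converge to the dominant root $\beta = (C + \sqrt{C^2-4})/2$. The identity $(x_1 + x_1')^2 - 4 x_2^2 = (x_1' - x_1)^2 + 4 x_3^2$, another consequence of $x_1 x_1' = x_2^2 + x_3^2$, rewrites $\beta$ in the stated form, and $\beta = (x_3\alpha + x_1)/x_2$ is a direct substitution against the formula for $\alpha$ in (a).

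For (c), the quadratic for $\alpha$ rearranges to $\alpha = z + 1/\alpha$ with $z = (x_1' - x_1)/x_3$. If $z > 0$ then $\alpha > 1$ and iterating $\alpha = z + 1/\alpha$ yields the purely periodic expansion $\alpha = [\overline{z}]$, with convergence guaranteed by the Seidel--Stern criterion. If $z < 0$ the quadratic formula shows $0 < \alpha < 1$, so $1/\alpha > 1$; rewriting the identity as $1/\alpha = (-z) + \alpha$ with $-z > 0$ iterates to $\alpha = 0 + 1/((-z) + 0 + 1/((-z) + \cdots )) = [0, \overline{-z}]$, again convergent by Seidel--Stern. Part (d) is (c) with $z = (x_1' - x_1)/x_3$ expanded via $x_1' = (x_2^2 + x_3^2)/x_1$.

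The main technical step is establishing the Markov-type recursion $z_{k+1} z_{k-1} = z_k^2 + x_3^2$ along the whole mutation sequence, which depends on the well-known invariance of the Markov quiver under mutation and the fact that the variable $x_3$ never participates as a mutated vertex. Once this is in hand, the invariance of $C_k$ reduces to the one-line factorization above, and (b), (c), (d) all follow by elementary algebra.
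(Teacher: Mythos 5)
Your argument is correct, and parts (a), (c), (d) follow the paper's own route: (a) is indeed just the discussion preceding the theorem, and your treatment of (c) via $\alpha = z + 1/\alpha$ and the reciprocal identity $[\,\overline{z}\,]\cdot[\,\overline{-z}\,]^{-1}$-type manipulation is the same computation the paper performs. Part (b), however, is where you genuinely diverge. The paper stays inside its continued-fraction framework: it swaps the labels $e_0$ and $b_0$ on the snake graph so that the relevant continued fraction becomes $[2,\overline{1}\,]$, computes the new Laurent polynomials $L_i'$ in terms of the old $L_i$, and uses Lemma \ref{lem cf} to relate $\beta=[L_1',L_2',\ldots]$ to $\alpha$ via $\alpha'=\frac{x_2}{x_3}\alpha$, arriving at $\beta = L_1'+1/\alpha'$. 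You instead exploit the exchange relations of the Markov quiver directly: the recursion $z_{k+1}z_{k-1}=z_k^2+x_3^2$, the conserved quantity $C_k=(z_k^2+z_{k-1}^2+x_3^2)/(z_kz_{k-1})$, and the resulting linear three-term recursion $z_{k+1}=Cz_k-z_{k-1}$ with $C=(x_1+x_1')/x_2>2$. This is more elementary and self-contained (it never touches the $L_i'$), and it has the side benefit of exhibiting the linearizability of the sequence $u(i)$; what it loses is the paper's point that $\beta$ itself is the value of an explicit infinite continued fraction of Laurent polynomials. One small imprecision: positivity of the $z_k$ alone does not force convergence to the dominant root $\lambda$, since $z_k=B\lambda^{-k}$ with $B>0$ is also positive; you should rule out the degenerate solution $A=0$ in $z_k=A\lambda^k+B\lambda^{-k}$ by noting that it would give $z_k\to 0$, contradicting $z_{k+1}z_{k-1}=z_k^2+x_3^2\ge x_3^2>0$. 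With that one line added, your proof of (b) is complete.
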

\begin{proof} Part (a) has already been proved above. 

(b) To study  the quotient of the cluster variables $u(i)/u(i-1)$, we need to consider the snake graph in Figure \ref{figsnakestraight}, except that we interchange the labels $e_0$ and $b_0$. The continued fraction of this snake graph is $[2,\overline{1} ] =\frac{3+\sqrt{5}}{2}.$  The subgraph $\calh_1$ is equal to the first tile and $x(\calh_1)=\frac{x_2^2+x_3^2}{x_1}=x_1'$. Denoting the corresponding Laurent polynomials by $L_i'$, we have
\[L_1'=\frac{x_1'}{x_2} , 
\qquad
L_2'=\frac{e_2 b_1}{b_0b_2}=\frac{x_2}{x_1} =L_1 \frac{x_2}{x_3} ,
\qquad
L_3'=\frac{e_3 b_0b_2}{b_1^2 b_3}=\frac{x_1x_3^2}{x_2^3} =L_2 \frac{x_3}{x_2} 
\]
and for $i\ge 2$, 
\[L_i'=\left\{\begin{array}{ll}L_{i\!-\!1}\frac{x_2}{x_3} &\textup{if $i$ is even;}\\
L_{i\!-\!1}\frac{x_3}{x_2} &\textup{if $i$ is odd.}\end{array}\right.
\]
Let $\zb=[L_1',L_2',\ldots]$ and $\za'=[L_2',L_3',\ldots]$. Then using Lemma~\ref{lem cf}, we have
$\za'=\frac{x_2}{x_3}\za,$ and
\[\zb=L_1'+\frac{1}{\za'}
 = \frac{x_1'+\frac{x_3}{\za}}{x_2} 
= \frac{x_1'+x_1+\sqrt{(x_1'-x_1)^2+4x_3^2}}{2x_2} ,\]
where the last identity holds because $\frac{1}{\za}=\frac{- x'_1+x_1+\sqrt{\left({ x'_1-x_1 }\right)^2+4x_3^2}}{2x_3}$. This shows the first identity in (b) and the second is a direct computation. 

(c) If $z>0 $, then $\left[\,\overline{ z}\,\right]>0$,  and
  \[\left[\,\overline{ z}\,\right]=   z +\frac{1}{\left[\,\overline{ z}\,\right]} . \]
 Therefore
 \[\left[\,\overline{ z}\,\right]^2 - z\left[\,\overline{ z}\,\right]-1=0,\] and  from the quadratic formula, we get
 \[\left[\,\overline{ z}\,\right]=\frac{z +\sqrt{z^2+4}}{2} =\za,\]
 where the sign in front of the radical is + because $\left[\,\overline{ z}\,\right]>0$. 
On the other hand, if $z<0$, then 
the computation above yields 
 \[\left[\,\overline{ -z}\,\right] = \frac{-z +\sqrt{z^2+4}}{2} .\]
  and its multiplicative inverse is 
  \[\left[0,\overline{- z}\,\right]  = \frac{z+\sqrt{z^2+4}}{2}=\za.\]
  
This shows (c), and (d) is obtained by replacing $x_1'$ by $\frac{x_2^2+x_3^2}{x_1}$.
\end{proof}

\begin{cor}  Considering $\za$  as a function of $x_1,x_2,x_3$, we have the following.
\[\begin{array}{crcll}
(a) & \za(rx_1,rx_2,rx_3)&=&\za(x_1,x_2,x_3),\textup{ for all $r\ne 0$}.
\\
(b) & 
\za(1,1,1)&=&\left[\,\overline{1}\,\right]=\frac{1+\sqrt{5}}{2} \textup{   is the golden ratio.}\\
&\za(1,1,2)&=&\left[\,\overline{2}\,\right]=1+\sqrt{2} \textup{   is the silver ratio.}\\
 &\za(1,1,n) &=&[\overline{n}]=\frac{n+\sqrt{n^2+4}}{2},\textup{  is called metallic mean.}  \\
(c)& \za(1,n,1)& =&\left[\overline{n^2}\right].\\
(d) &\za(n,1,1) &=&\left[ 0, \overline{(n^2-2)/n}\right]=\left[0,\overline{n-1,1,n-1}\right].\\
(e) &\za(2n+1,1,2)& =&\left[0,\overline{\frac{2(n^2+n-1)}{2n+1}}\,\right] = [0,\overline{ n,2,n}\,].\\
(f)& \za(1,1,\pm 2i) &= &\pm i, \textup{ where $i=\sqrt{-1}\in \mathbb{C}$.}\\
(g)& \zb(1,1,\pm 2i) &=& -1.
\end{array}\]
\end{cor}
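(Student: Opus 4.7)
The plan is to reduce every part of the corollary to a direct substitution in the closed formulas for $\alpha$ and $\beta$ supplied by Theorem~\ref{thm inf}, and then to identify, where needed, two different continued-fraction expansions of the same quadratic irrational.

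For (a) I would simply observe that in the expression $\alpha=(x_1'-x_1+\sqrt{(x_1'-x_1)^2+4x_3^2}\,)/(2x_3)$ from Theorem~\ref{thm inf}(a), both $x_1'-x_1=(x_2^2+x_3^2-x_1^2)/x_1$ and $x_3$ are homogeneous of degree $1$ in $(x_1,x_2,x_3)$, so the entire ratio is scale-invariant. Parts (b) and (c) follow immediately from Theorem~\ref{thm inf}(d) by computing $z=(x_2^2+x_3^2-x_1^2)/(x_1x_3)$: one gets $z=1,2,n,n^2$ in the four listed cases, yielding the golden ratio, silver ratio, $n$-th metallic mean, and $[\overline{n^2}]$ respectively. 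Parts (f) and (g) are similar direct substitutions in the closed formula: with $x_1=x_2=1$ and $x_3=\pm 2i$ one finds $x_1'-x_1=-4$ and $(x_1'-x_1)^2+4x_3^2=16-16=0$, so $\alpha=-4/(\pm 4i)=\pm i$; then the identity $\beta=(x_3\alpha+x_1)/x_2$ of Theorem~\ref{thm inf}(b) gives $\beta=(\pm 2i)(\pm i)+1=-1$.

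For (d) and (e) the first equality in each case is again Theorem~\ref{thm inf}(d), noting that $z<0$ in both situations, so one uses the form $\alpha=[0,\overline{-z}]$. The main obstacle, and the only non-computational step, is establishing the identities
\[[\overline{n-1,1,n-1}]=[\overline{(n^2-2)/n}]\qquad\textup{and}\qquad [\overline{n,2,n}]=[\overline{2(n^2+n-1)/(2n+1)}].\]
For each such identity I would set $\gamma$ equal to the left-hand side and use the self-similarity of a purely periodic continued fraction, namely $\gamma=[c_1,\ldots,c_k,\gamma]$, to derive a quadratic equation for $\gamma$. Concretely, for (d), writing $\gamma=(n-1)+1/(1+1/((n-1)+1/\gamma))$ and clearing denominators yields
\[n\gamma^2-(n^2-2)\gamma-n=0,\]
which is precisely the quadratic satisfied by $[\overline{(n^2-2)/n}]$. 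For (e), the analogous expansion of $[\overline{n,2,n}]$ reduces to
\[(2n+1)\gamma^2-2(n^2+n-1)\gamma-(2n+1)=0,\]
the defining quadratic for $[\overline{2(n^2+n-1)/(2n+1)}]$. Since both continued fractions are positive, they agree with the unique positive root of the common quadratic, establishing the identities.

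The only conceptual care needed is the positivity of the rational coefficients $(n^2-2)/n$ and $2(n^2+n-1)/(2n+1)$ for $n\ge 2$ (resp. $n\ge 1$), which guarantees convergence by the Seidel--Stern criterion already invoked in the proof of Theorem~\ref{thm inf}, and then in turn that the periodic continued fraction represents the positive root of its quadratic.
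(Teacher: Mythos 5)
Your proof is correct and takes essentially the same route as the paper, whose entire proof is the single line ``This follows directly from part (d) of Theorem \ref{thm inf}''; your case-by-case evaluation of $z=(x_2^2+x_3^2-x_1^2)/(x_1x_3)$ and your verification of the period identities $[\overline{n-1,1,n-1}]=[\overline{(n^2-2)/n}]$ and $[\overline{n,2,n}]=[\overline{2(n^2+n-1)/(2n+1)}]$ via the self-similarity quadratic are exactly the computations the paper leaves implicit, and they check out. One small quibble on (a): with the principal branch, $\sqrt{r^2A}=|r|\sqrt{A}$, so the homogeneity argument applied to the radical formula only gives invariance for $r>0$ (for $r<0$ it would select the other root); for all $r\neq 0$ it is cleaner to note that $z$ itself is homogeneous of degree $0$ and invoke Theorem \ref{thm inf}(d), which is what the paper's one-line proof amounts to.
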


\begin{proof}
  This follows directly from part (d) of Theorem \ref{thm inf}. 
 \end{proof}

\begin{remark}
In a very similar way, we can  compute the function $\za'=[L'_1,L'_2,\cdots] $ starting from the continued fraction $[2,2,\ldots]$. More precisely, let $u'(i)$ be the cluster variable obtained from $\mathbf{x}=(x_1,x_2,x_3)$ by the mutation sequence $(\mu_2\circ\mu_1)^i\circ\mu_3$, and let $\calg'(i)$ be its snake graph. Then $\calg'=\calg[2,2,2,\ldots,2]$. Let  $v(i)$ be the  Laurent polynomial corresponding to the snake graph $\calg'(i)\setminus \calh'_1$ obtained from $\calg'$ by removing the subgraph $\calh'_1$.  Then the quotient $u'(i)/v'(i)$   converges as $i$ tends to infinity and its limit $\za'$  is obtained from $\za$ by substituting $x_3$ by $x_3'=(x_1^2+x_2^2)/x_3$, that is  
\[\za' =  \za|_{x_3\leftarrow x_3'} .\] 
This can be seen either by direct computation, or by the fact that the substitution corresponds to the first mutation $\mu_3$ in the sequence leading to $\za'$, and the rest of the mutation sequence is equal to the mutation sequence leading to $\za$.
\end{remark}

\smallskip
Theorem \ref{thm inf} implies the following analogous result for the cluster algebra of the annulus with one marked point on each boundary component, also given by the Kronecker quiver. 

\begin{cor} Let $\cala $ be the cluster algebra  of the quiver $\xymatrix{1\ar@<2pt>[r]\ar@<-2pt>[r]&2}$ with initial cluster  $\x=(x_1,x_2)$. Let $x'_1=(x_2^2+1)/x_1$ denote the cluster variable obtained from $\x$ by the single mutation $\mu_1$. Let  $u(i)$ denote the cluster variable obtained from $\x$ by the mutation sequence  $\mu_1\mu_2\mu_1\mu_2\mu_1\cdots$ of length $i$, and let $\calg(i)$ be its snake graph.  Let  $v(i)$ be the  Laurent polynomial corresponding to the snake graph $\calg(i)\setminus 
\calh_1$ 
obtained from $\calg$ by removing the  edge $e_0$.  

\begin{itemize}
  \item [\textup{(a)}]
 The quotient $u(i)/v(i)$   converges as $i$ tends to infinity and its limit is  
\[\za =  \frac{{ x'_1-x_1} +\sqrt{\left({ x'_1-x_1}\right)^2+4}}{2}. \]
\item [\textup{(b)}]
The quotient of the cluster variables $u(i)/u(i-1)$ converges as $i$ tends to infinity and its limit is 
\[\zb
= \frac{x_1'+x_1+\sqrt{(x_1'-x_1)^2+4}}{2x_2} .\]

\item [\textup{(c)}]
 The limit $\za$ has the following  periodic continued fraction expansion
\[\za= \left\{\begin{array}{ll}\left[\,\overline{ { { x'_1-x_1}}}\,\right] , &\textup{if ${ { x'_1>x_1}} $;}\\ \\
\left[ 0, \overline{{ { x_1-x'_1}} }\,\right] &\textup{if ${ { x'_1<x_1}}$} .\end{array}\right.\]
\item [\textup{(d)}]
 Equivalently
\[\za =\left[ \,\overline{\left(\frac{x_2^2+1-x_1^2}{x_1}\right)} \,\right]  \ or\  \za =\left[ \,0,\overline{\left(\frac{x_1^2-x_2^2-1}{x_1}\right)} \,\right] \]

\end{itemize}

\end{cor}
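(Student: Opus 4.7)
The strategy is to derive this corollary by specializing Theorem~\ref{thm inf} to $x_3=1$. The cluster algebra of the Kronecker quiver is the cluster algebra of the annulus with one marked point on each boundary component, triangulated by two internal arcs labeled $1$ and $2$ together with two boundary segments. In this surface, I would construct the infinite generalized arc $\zg$ that crosses the arcs labeled $1$ and $2$ alternately, exactly as in Figure~\ref{figsnakestraight}, so that its snake graph is the infinite straight snake graph with tile labels $1,2,1,2,\ldots$. The key observation is that the edges which were labeled $x_3$ in the torus case now correspond to boundary segments of the annulus, so they are assigned weight $1$; apart from this, the snake graph combinatorics and the mutation combinatorics are identical to those in Theorem~\ref{thm inf}.

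I would then apply Theorem~\ref{thmx} to the truncated generalized arc $\zg_i$ with snake graph $\calg(i)$ to obtain the identity $u(i)/v(i) = [L_1,L_2,\ldots,L_{2i-1}]$, where each $L_j$ is computed from Definition~\ref{def L} with $x_3$ replaced by $1$. Passing to the limit $i\to\infty$ and running the exact same computation as in the proof of Theorem~\ref{thm inf}(a) (the quadratic equation $x_3\za^2+(x_1-x_1')\za-x_3=0$ becomes $\za^2+(x_1-x_1')\za-1=0$ after setting $x_3=1$) yields part~(a). Parts~(c) and~(d) are then immediate by setting $x_3=1$ in parts~(c) and~(d) of Theorem~\ref{thm inf}; note in particular that the quantity $z=(x_1'-x_1)/x_3$ becomes $z=x_1'-x_1$, and the formula in~(d) follows by substituting $x_1'=(x_2^2+1)/x_1$.

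Part~(b) is handled by the same device as in the proof of Theorem~\ref{thm inf}(b): one interchanges the roles of the edges $e_0$ and $b_0$ at the south-west corner of the snake graph, which transforms the continued fraction $[\,\overline{1}\,]$ into $[2,\overline{1}\,]$, and then $u(i)/u(i-1)$ corresponds to $[L_1',L_2',\ldots]$ with $L_1' = x_1'/x_2$ and $L_j' = L_{j-1}$ for $j\ge 2$ (all with $x_3=1$). Using Lemma~\ref{lem cf} in the same manner as in Theorem~\ref{thm inf}(b) gives $\zb = (x_1'+1/\za)/x_2$, and then substituting the formula for $1/\za$ from part~(a) produces the closed form stated in~(b).

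There is no real obstacle here beyond bookkeeping; the only mildly delicate point is to confirm that setting $x_3=1$ in the torus-based computation is legitimate in the annulus setting, but this is justified once one realizes that the boundary segments of the annulus play the combinatorial role of the edges labeled $x_3$ in the torus snake graph, and weights of boundary arcs are taken to be $1$ in the snake graph formula of~\cite{MSW}. The quiver mutations $\mu_1,\mu_2$ act identically on $x_1,x_2$ in both cluster algebras, so the cluster variables $u(i)$ and the auxiliary Laurent polynomials $v(i)$ are indeed the $x_3\mapsto 1$ specializations of the corresponding elements in the torus cluster algebra.
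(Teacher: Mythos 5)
Your proposal is correct and is essentially the paper's own proof, which simply states that the corollary follows from Theorem~\ref{thm inf} by specializing $x_3=1$; your additional justification via the annulus realization (boundary segments carrying weight $1$) is a reasonable elaboration of why that specialization is legitimate. One immaterial slip: with $x_3=1$ the relation is $L_j'=L_{j-1}\cdot x_2^{\pm1}$ rather than $L_j'=L_{j-1}$ for $j\ge2$, but this does not affect the argument since part~(b) follows directly by specializing the already-established formula of Theorem~\ref{thm inf}(b).
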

\begin{proof}
This follows from Theorem \ref{thm inf} by specializing $x_3=1$. 
\end{proof}

Acknowledgements: We thank  Dmitry Badziahin,  Keith Conrad, Anna Felikson and Andy Hone for  stimulating discussions.

{}


\begin{thebibliography}{MSW2}
\bibitem[BZ]{BZ} T. Br\"ustle and J. Zhang,  On the cluster category of a marked surface without punctures, {\em Algebra Number Theory\/} {\bf 5} (2011), no. 4, 529--566.
\bibitem[CF]{CF} I. Canakci and A. Felikson, Infinite rank surface cluster algebras, in preparation.
\bibitem[CS]{CS} I. Canakci and R. Schiffler, Snake graph calculus and cluster algebras from surfaces, {\it J. Algebra\/}, {\bf 382} (2013) 240--281.

\bibitem[CS2]{CS2} I. Canakci and R. Schiffler, Snake graph calculus and cluster algebras from surfaces II: Self-crossing snake graphs, {\it Math. Z.\/} {\bf 281} (1), (2015), 55--102. 
\bibitem[CS3]{CS3} {I. Canakci and R. Schiffler, Snake graph calculus and cluster algebras from surfaces III: Band graphs and snake rings. {\tt arXiv:1506.01742.}}
\bibitem[CS5]{CS5} I. Canakci and R. Schiffler, Snake graphs and continued fractions, in preparation.
\bibitem[C]{Cox}H.S.M. Coxeter, Frieze patterns, {\em Acta Arith.\/} {\bf 18} (1971) 298--310.

\bibitem[DFK]{DFK1} P. Di Francesco and R. Kedem, Q-systems, heaps, paths and cluster positivity, {\em Comm. Math. Phys.} {\bf 293} (2010) 727--802.

\bibitem[FST]{FST} S. Fomin, M. Shapiro, and D. Thurston, Cluster algebras and triangulated surfaces. Part I: Cluster complexes, \emph{Acta Math.} {\bf 201} (2008), 83-146. 

\bibitem[FZ1]{FZ1} S. Fomin and A. Zelevinsky, Cluster algebras
I: Foundations, \emph{J. Amer. Math. Soc.} \bf 15 \rm (2002),
497--529.
 \bibitem[FZ4]{FZ4} S. Fomin and A. Zelevinsky, Cluster algebras IV: Coefficients, \emph{Compos. Math.} \bf 143 \rm (2007), 112--164.
\bibitem[GG]{GG}  J. Grabowski and  S. Gratz, Cluster algebras of infinite rank, {\em J. Lond. Math. Soc.}\/, {\bf 89} (2014), no. 2, 337--363.

\bibitem[HW]{HW} G. H. Hardy and E. M. Wright, An introduction to the theory of numbers, fourth edition, Oxford at the Clarendon Press, 1960.

\bibitem[HJ]{HJ}  T. Holm and P. Jorgensen, On a cluster category of infinite Dynkin type, and the relation to triangulations of the infinity-gon, {\em Math. Z.\/}{\bf 270}, 1, 277--295.
\bibitem[IT]{IT} K. Igusa and G. Todorov, Continuous Cluster Categories I, {\em Algebr. Represent. Theory\/} {\bf 18} 1, (2015), 65--101.
\bibitem[L]{Labardini} D. Labardini-Fragoso, Quivers with potentials associated to triangulated surfaces, part IV: removing boundary assumptions, {\em Sel. Math. New Ser.\/}  {\bf 22}, 1, 145--189.
 
\bibitem[LS]{LS4} K. Lee and R. Schiffler, Positivity for cluster algebras,   \emph{Annals of Math.} {\bf 182} (1), (2015) 73--125.

\bibitem[LP]{LP} S. Liu and C. Paquette, Cluster categories of type $\mathbb{A}_\infty^\infty$ and triangulations of the infinite strip, {\tt arXiv:1505.06062.}

\bibitem[M]{Muir} T. Muir,
A treatise on the theory of determinants. 
Revised and enlarged by William H. Metzler Dover Publications, Inc., New York 1960.
\bibitem[MS]{MS} G. Musiker and R. Schiffler, Cluster expansion formulas and
perfect matchings, \emph{J. Algebraic Combin.} {\bf 32} (2010), no. 2, 187--209.

\bibitem[MSW]{MSW} G. Musiker, R. Schiffler and L. Williams, Positivity for cluster algebras from surfaces,  
\emph{Adv. Math.}  {\bf 227}, (2011), 2241--2308.

\bibitem[MSW2]{MSW2} G. Musiker, R. Schiffler and L. Williams, Bases for cluster algebras from surfaces,  
 \emph{Compos. Math.\/} {\bf 149}, 2, (2013), 217--263.

\bibitem[NaSt]{NaSt} T. Nakanishi and S. Stella, Wonder of sine-Gordon $Y$-systems, \emph{Trans. Amer. Math. Soc.} no 10, 6835--6886.

\bibitem[Pa]{Pa} Y. Palu, Cluster characters for 2-Calabi-Yau triangulated categories, {\em Ann. Inst. Fourier\/} {\bf 58} (2008), no. 6, 2221--2248.

\bibitem[P]{Perron} O. Perron, Die Lehre von den Kettenbr\"uchen, 2nd edition, Chelsea Publishing Company, New York, 1950. (the first edition of this book from 1913 is available at {\tt https://archive.org/details/dielehrevondenk00perrgoog}.)
\bibitem[Pr]{Propp} J. Propp,  The combinatorics of frieze patterns and Markoff numbers, preprint, {\tt 
arXiv:math.CO/0511633.}
\bibitem[QZ]{QZ} Y. Qiu and Y. Zhou, Cluster categories for marked surfaces: punctured case, {\tt arXiv:1311.0010}.
\bibitem[S]{S} R. Schiffler, Lecture notes on cluster algebras from surfaces,  CIMPA School, Mar del Plata, 2016, to appear in Proceedings of the conference. {\tt http://www.math.uconn.edu/\~{ }schiffler/LNCIMPA.pdf}
\bibitem[U]{ustinov}  A. V. Ustinov,
A short proof of Euler's identity for continuants. (Russian) Mat. Zametki {\bf 79} (2006), no. 1, 155--156; translation in 
Math. Notes {\bf 79} (2006), no. 1-2, 146--147. 

\end{thebibliography}
 \end{document}